\theoremstyle{plain} 
\newtheorem{theointro}{Th\'eor\`eme}
\newtheorem{corointro}[theointro]{Corollaire}
\newtheorem{propintro}[theointro]{Proposition}
\theoremstyle{definition} 
\newenvironment{nota}{\begin{enonce}[definition]{Notation}}{\end{enonce}}
\DeclareFontFamily{U}{mathc}{}
\DeclareFontShape{U}{mathc}{m}{it}%
{<->s*[1.03] mathc10}{}
\DeclareMathAlphabet{\mathcal}{U}{mathc}{m}{it}
\newcommand{\N}{\ensuremath{\mathbf{N}}}
\newcommand{\Z}{\ensuremath{\mathbf{Z}}}
\newcommand{\Q}{\ensuremath{\mathbf{Q}}}
\newcommand{\R}{\ensuremath{\mathbf{R}}}
\newcommand{\C}{\ensuremath{\mathbf{C}}}
\renewcommand{\P}{\ensuremath{\mathbf{P}}}
\newcommand{\F}{\ensuremath{\mathbf{F}}}
\newcommand{\kF}{\mathfrak{F}}
\newcommand{\E}[2]{\ensuremath{\mathbf{A}^{#1,\mathrm{an}}_{#2}}}
\newcommand{\EP}[2]{\ensuremath{\mathbf{P}^{#1,\mathrm{an}}_{#2}}}
\newcommand{\oD}{\overline{D}}
\newcommand{\too}{\longrightarrow}
\newcommand{\simtoo}{\overset{\sim}{\longrightarrow}}
\newcommand{\mapstoo}{\longmapsto}
\newcommand{\ho}[1]{\mathbin{\hat{\otimes}_{#1}}}
\DeclareMathOperator{\rang}{rang}
\DeclareMathOperator{\Frac}{Frac}
\DeclareMathOperator{\Gal}{Gal}
\DeclareMathOperator{\Met}{M\acute{e}t}
\newcommand{\id}{\mathrm{id}}
\newcommand{\st}{\mathrm{st}}
\renewcommand{\div}{\mathrm{div}}
\newcommand{\hyb}{\mathrm{hyb}}
\newcommand{\simto}{\xrightarrow[]{\sim}}
\newcommand*\diff{\mathop{}\!\mathrm{d}}
\DeclareMathOperator{\Supp}{Supp}
\newcommand{\pr}{\mathrm{pr}}
\DeclareMathOperator{\Mes}{Mes}
\DeclareMathOperator{\SH}{SH}
\DeclareMathOperator{\DSH}{DSH}
\DeclareMathOperator{\Exc}{Exc}
\DeclareMathOperator{\MCL}{MCL}
\newcommand{\m}{\ensuremath{\mathfrak{m}}}
\newcommand{\cA}{\mathcal{A}}
\newcommand{\cB}{\mathcal{B}}
\newcommand{\cC}{\mathcal{C}}
\newcommand{\cF}{\mathcal{F}}
\newcommand{\cH}{\mathcal{H}}
\newcommand{\cI}{\mathcal{I}}
\newcommand{\cK}{\mathcal{K}}
\newcommand{\cM}{\mathcal{M}}
\newcommand{\cO}{\mathcal{O}}
\newcommand{\cn}[2]{\ensuremath{\llbracket{#1},{#2}\rrbracket}}
\DeclarePairedDelimiter{\intoo}{]}{[}
\DeclarePairedDelimiter{\intof}{]}{]}
\DeclarePairedDelimiter{\intfo}{[}{[}
\DeclarePairedDelimiter{\intff}{[}{]}
\newcommand{\la}{\ensuremath{\langle}}
\newcommand{\ra}{\ensuremath{\rangle}}
\newcommand{\eps}{\ensuremath{\varepsilon}}
\newcommand{\disp}{\displaystyle}
\DeclarePairedDelimiter\abs{\lvert}{\rvert}
\DeclarePairedDelimiter\Bigabs{\Big\lvert}{\Big\rvert}
\DeclarePairedDelimiter\norm{\lVert}{\rVert}
\newcommand\wc{{\mkern 2mu\cdot\mkern 2mu}}
\newcommand\va{\abs{\wc}}
\newcommand\nm{\norm{\wc}}
\newcommand\cf{\textit{cf}.}
\newcommand{\fonction}[5]{\begin{array}{cccc}
#1 \colon &#2&\too&#3\\
&#4&\mapstoo&#5
\end{array}}
\newcommand{\JP}[1]{\ifnum\commentaires = 1{\color{magenta}{#1}}\fi}
\def \spectriv(#1,#2,#3,#4,#5,#6){
\foreach \x in {0,...,#6}
\draw[line width=.001pt] ({#1},{#2}) -- ({#1+#5*cos((\x)/(#6)*(360/#4)+#3)},{#2+#5*sin((\x)/(#6)*(360/#4)+#3)}) ;}
\def \spectrivcentre(#1,#2,#3,#4,#5,#6){
\foreach \x in {-#6,...,#6}
\draw[line width=.001pt] ({#1},{#2}) -- ({#1+#5*cos((\x)/(#6)*(360/#4)+#3)},{#2+#5*sin((\x)/(#6)*(360/#4)+#3)}) ;}
\def \spectrivcentreenplus(#1,#2,#3,#4,#5,#6,#7,#8,#9){
\foreach \y in {-#5,...,#5}
\spectrivcentre(#1+#6*cos((\y)/(#5)*(360/#4)+#3),#2+#6*sin((\y)/(#5)*(360/#4)+#3),(\y)/(#5)*(360/#4)+#3,#7,#8,#9) 
;}
\begin{document}

\frontmatter

\title[Dynamique analytique sur~$\Z$. I]{Dynamique analytique sur~$\Z$. I~: Mesures d'\'equilibre sur une droite projective relative.}
\alttitle{Analytic dynamics over~$\Z$. I: Equilibrium measures on a relative projective line.}

\author{J\'er\^ome Poineau}
\address{Normandie Univ., UNICAEN, CNRS, Laboratoire de math\'ematiques Nicolas Oresme, 14000 Caen, France}
\email{\href{mailto:jerome.poineau@unicaen.fr}{jerome.poineau@unicaen.fr}}
\urladdr{\url{https://poineau.users.lmno.cnrs.fr/}}

\date{\today}

\makeatletter
\@namedef{subjclassname}{Classification math\'ematique par sujets \textup{(2020)}}
\makeatother

\subjclass{37P50, 37P15, 37F44, 14G22}
\keywords{Espaces de Berkovich sur~$\Z$, espaces hybrides, mesures d'\'equilibre, th\'eorie du potentiel}
\altkeywords{Berkovich spaces over~$\Z$, hybrid spaces, equilibrium measures, potential theory}

\begin{abstract}
Consid\'erons un espace de Berkovich sur un bon anneau de Banach et la droite projective relative sur celui-ci. (C'est un espace dont les fibres sont des droites projectives sur diff\'erents corps valu\'es complets.) Pour tout endomorphisme polaris\'e de cette droite, nous montrons que la famille des mesures d'\'equilibre associ\'ees aux restrictions de l'endomorphisme aux fibres est continue. Le r\'esultat vaut, par exemple, lorsque l'anneau de Banach est un corps valu\'e complet, un corps hybride, un anneau de valuation discr\`ete complet ou un anneau d'entiers de corps de nombres.
\end{abstract}

\begin{altabstract}
Consider a Berkovich space over a good Banach ring and the relative projective line over it. (It is a space whose fibers are projective lines over different complete valued fields.) For each polarized endomorphism of this line, we prove that the family of equilibrium measures associated to the restrictions of the endomorphism to the fibers is continuous. The result holds, in particular, when the Banach ring is a complete valued field, a hybrid field, a complete discrete valuation ring, or the ring of integers of a number field.

\end{altabstract}

\maketitle

\tableofcontents

\mainmatter

\section{Introduction}

Dans ce texte, nous initions l'\'etude des syst\`emes dynamiques dans le cadre des espaces analytiques sur~$\Z$, au sens de Vladimir G. Berkovich (\cf~\cite{rouge}), ou sur d'autres anneaux de Banach aux propri\'et\'es similaires. Ces espaces m\^elent naturellement fibres archim\'ediennes (espaces analytiques complexes usuels, \'eventuellement \`a conjugaison pr\`es) et fibres ultram\'etriques (espaces de Berkovich classiques sur les diff\'erents~$\Q_{p}$ et d'autres corps valu\'es). Aussi les syst\`emes dynamiques que nous consid\'erons se pr\'esentent-ils comme des familles de syst\`emes dynamiques sur des espaces analytiques de diff\'erente nature. L'objectif de ce texte est de d\'emontrer que, sous des hypoth\`eses tr\`es g\'en\'erales, les mesures d'\'equilibre associ\'ees forment des familles continues de mesures.

\subsection{\'Enonc\'e des r\'esultats} 

Commen\c cons par quelques rappels sur la notion de mesure d'\'equilibre, tout d'abord dans le cas de la droite projective complexe~$\P^1(\C)$. Soit $\varphi$ une fraction rationnelle complexe de degr\'e~$d$. Pour $a \in \P^1(\C)$, notons $\delta_{a}$ la mesure de Dirac support\'ee au point~$a$. Pour $n\in \N$, notons $a_{1},\dotsc,a_{d^n}$ les solutions de l'\'equation $\varphi^n = a$, compt\'ees avec multiplicit\'e, et posons
\[ [ \varphi^{-n}(a) ] :=\sum_{i=1}^{d^n} \delta_{a_{i}}.\]
Rappelons que l'on d\'efinit l'ensemble exceptionnel de~$\varphi$ par
\[ \Exc(\varphi) := \{ a \in \P^1(\C) : \varphi^{-1}(a) = \{a\} \} \]
et qu'il contient au plus 2 \'el\'ements. Les travaux de H.~Brolin \cite{BrolinInvariantSets}, dans le cas polynomial, puis de M.~Lyubich \cite{LyubichEntropy} et A.~Freire-A.~Lopez-R.~Ma\~n\'e \cite{FreireLopezMane}, dans le cas g\'en\'eral, montrent qu'il existe une mesure de probabilit\'e~$\mu_{\varphi}$ sur~$\P^1(\C)$ telle que l'on ait
\[ \lim_{n\to +\infty}  \frac1{d^n}  [ \varphi^{-n}(a) ] = \mu_{\varphi},\]
pour tout $a\in \P^1(\C)$ hors de l'ensemble exceptionnel~$\Exc(\varphi)$.
La mesure~$\mu_{\varphi}$ ainsi d\'efinie, dite \emph{mesure d'\'equilibre} associ\'ee \`a~$\varphi$, satisfait deux propri\'et\'es fondamentales, qui, prises ensemble, la caract\'erisent (\cf~\cite[d\'ebut de II]{ManeHausdorffDimension}) :
\begin{enumerate}[i)]
\item $\varphi^*\mu_{\varphi} = d \, \mu_{\varphi}$ ;
\item le support de~$\mu_{\varphi}$ est disjoint de~$\Exc(\varphi)$.
\end{enumerate}
On v\'erifie, en outre, que le support de~$\mu_{\varphi}$ n'est autre que l'ensemble de Julia~$J(\varphi)$ associ\'ee \`a~$\varphi$.

Pla\c cons-nous maintenant sur un corps valu\'e ultram\'etrique complet~$k$, et rempla\c cons la droite projective complexe~$\P^1(\C)$ par la droite projective~$\EP{1}{k}$ au sens de Berkovich. Nonobstant le caract\`ere totalement discontinu de~$k$, l'espace~$\EP{1}{k}$ jouit de bonnes propri\'et\'es topologiques, telles la compacit\'e et la connexit\'e par arcs locales, et propose un cadre pertinent o\`u adapter les th\'eories complexes classiques.
Ch.~Favre et J.~Rivera--Letelier l'ont ainsi utilis\'e pour associer \`a toute fraction rationnelle~$\varphi$ \`a coefficients dans~$k$ une mesure d'\'equilibre~$\mu_{\varphi}$, poss\'edant les m\^emes propri\'et\'es que son analogue classique,
\cf~\cite{FRLBrolin}. 

L'objet de ce texte est d'\'etudier la variation de ces mesures d'\'equilibre. 
Pla\c cons-nous tout d'abord dans le cadre complexe et consid\'erons une suite de fractions rationnelles complexes $(f_{n})_{n\in \N}$ qui converge vers une fraction rationnelle complexe~$f$, au sens de la convergence uniforme pour les endomorphismes de~$\P^1(\C)$ associ\'es. D'apr\`es \cite[theorem~B]{ManeHausdorffDimension}, la suite de mesures $(\mu_{f_{n}})_{n\in \N}$ sur~$\P^1(\C)$ converge alors vers la mesure~$\mu_{f}$ pour la topologie faible. L'analogue ultram\'etrique de ce r\'esultat est \'egalement valable.

Nous souhaitons d\'emontrer un r\'esultat analogue, mais avec des corps de base variables. 
Pour le formuler, nous consid\'erons une famille de fractions rationnelles param\`etr\'ee par un espace de Berkovich~$Y$ sur un anneau de Banach~$\cA$. Remettant \`a plus tard les d\'efinitions pr\'ecises (\cf~section~\ref{sec:Berkovichdefinition}), nous nous contentons de rappeler qu'\`a tout point~$y$ de~$Y$ est associ\'e un corps r\'esiduel compl\'et\'e~$\cH(y)$, qui peut \^etre aussi bien ultram\'etrique~: $\Q_{p}$, $\C(\!(t)\!)$, $\F_{p}(\!(t)\!)$, etc. qu'archim\'edien~: $\R$ ou~$\C$. Signalons que la th\'eorie de Berkovich garde un sens dans ce dernier cadre et que la droite projective $\EP{1}{\cH(y)}$ s'identifie alors \`a un espace familier~: la droite projective complexe usuelle~$\P^1(\C)$, dans le cas de~$\C$, et son quotient par la conjugaison complexe, dans le cas de~$\R$. La th\'eorie des mesures d'\'equilibre s'\'etend sans peine \`a ces espaces.

Le r\'esultat principal de ce texte s'\'enonce ainsi. Il est valable sous certaines conditions techniques sur l'anneau de Banach~$\cA$ consid\'er\'e, \cf~section~\ref{sec:bonsanneaux}. Elles sont satisfaites pour les anneaux classiques de la th\'eorie tels que $\Z$ et les anneaux d'entiers de corps de nombres, les corps hybrides, ou encore les anneaux de valuation discr\`ete complets (et bien s\^ur les corps valu\'es complets).

\begin{theointro}[\textmd{\emph{infra} th\'eor\`eme~\ref{th:muphicontinue}}]\label{th:continuiteintro}
Soit $Y$ un espace de Berkovich sur un bon anneau de Banach~$\cA$. Notons $X := Y \times_{\cA} \EP{1}{\cA}$ la droite projective relative au-dessus de~$Y$. Soit~$\varphi$ un endomorphisme de~$X$ au-dessus de~$Y$, fini et plat de degr\'e $d\ge 2$, et polaris\'e\footnote{Cette derni\`ere condition se traduit en l'existence d'un isomorphisme $\varphi^* \cO_{X}(1) \simeq \cO_{X}(d)$, les fibr\'es $\cO_{X}(1)$ et $\cO_{X}(d)$ \'etant les tir\'es en arri\`ere sur~$X$ des fibr\'es $\cO(1)$ et $\cO_{X}(d)$ sur $\E{1}{\cA}$.}.


Pour tout point~$y$ de~$Y$, l'endomorphisme~$\varphi$ induit un endomorphisme~$\varphi_{y}$ de la fibre $X_{y} \simeq \EP{1}{\cH(y)}$. Notons $\mu_{\varphi_{y}}$ la mesure d'\'equilibre associ\'ee, identifi\'ee \`a son image sur~$X$. 

Alors la famille de mesures $(\mu_{\varphi_{y}})_{y\in Y}$ sur~$X$ est continue.
\end{theointro}

La continuit\'e de la famille de mesures est \`a prendre au sens de la topologie faible et signifie pr\'ecis\'ement que, pour toute fonction $f\colon X\to \R$ continue \`a support compact, la fonction
\[ y\in Y \mapstoo \int f_{\vert X_{y}} \diff \mu_{\varphi_{y}} \in \R\]
est continue.

\medbreak

Le cas le plus simple du th\'eor\`eme~\ref{th:continuiteintro}, celui o\`u l'endomorphisme~$\varphi$ est l'application d'\'el\'evation au carr\'e $z\mapsto z^2$ est d\'ej\`a int\'eressant. Dans ce cas, la mesure d'\'equilibre est compl\`etement explicite. Sur $\EP{1}{\C} = \P^1(\C)$, il s'agit de la mesure de Haar $\mu_{\mathrm H}$ de masse totale~1 sur le cercle unit\'e. Sur $\EP{1}{\R}$ (quotient de $\P^1(\C)$ par la conjugaison complexe), c'est l'image de la pr\'ec\'edente, et nous la noterons identiquement. Sur un corps valu\'e ultram\'etrique complet~$k$, en revanche, on obtient la mesure de Dirac~$\delta_{\mathrm G}$ support\'ee au point de Gau\ss, un point particulier de~$\EP{1}{k}$ qui joue le r\^ole de point g\'en\'erique du cercle unit\'e. L'analogie entre ces mesures se situe au c\oe ur du travail \cite{ACLMesures} d'Antoine Chambert--Loir, qui l'a remarqu\'ee et largement diffus\'ee. Revenant au probl\`eme qui nous int\'eresse, nous montrons que ces diff\'erentes mesures forment une famille continue. 


\begin{theointro}[\textmd{\emph{infra} th\'eor\`eme~\ref{th:chicontinue}}]\label{th:ACLintro}
Soit $Y$ un espace de Berkovich sur un bon anneau de Banach~$\cA$. Pour tout point~$y$ de~$Y$, consid\'erons la mesure $\chi_{y}$ sur $X_{y}\simeq \EP{1}{\cH(y)}$ d\'efinie par
\[ \chi_{y} := 
\begin{cases}
\mu_{\mathrm H} & \textrm{si } \cH(y) \textrm{ est archim\'edien ;}\\
\delta_{\mathrm G} & \textrm{si } \cH(y) \textrm{ est ultram\'etrique,}
\end{cases}\]
identifi\'ee \`a son image sur~$X$.

Alors la famille de mesures $(\chi_{y})_{y\in Y}$ sur $X$ est continue.
\end{theointro}


Ajoutons que nous d\'emontrons, en m\^eme temps que le th\'eor\`eme~\ref{th:continuiteintro}, un r\'esultat de continuit\'e des potentiels des mesures~$\mu_{\varphi_{y}}$. Rappelons que, sur tout corps valu\'e ultram\'etrique complet, on peut, comme sur~$\C$, d\'efinir une th\'eorie du potentiel sur la droite projective au sens de Berkovich, et, en particulier, pour une certaine classe de fonctions, un op\'erateur laplacien~$\Delta$ qui prend ses valeurs dans l'ensemble des mesures de Radon. 


\begin{propintro}[\textmd{\emph{infra} lemme~\ref{lem:lambdaphi} et proposition \ref{prop:convergenceXy}}]\label{prop:potentielintro}%
Avec les hypoth\`eses et notations des th\'eor\`emes~\ref{th:continuiteintro} et~\ref{th:ACLintro}, il existe une fonction continue $\lambda_{\varphi} \colon X \to \R$ telle que, pour tout $y\in Y$, le laplacien de $\lambda_{\varphi,\vert X_{y}}$ est bien d\'efini et satisfait l'\'egalit\'e
\[\Delta (\lambda_{\varphi,\vert X_{y}}) = \chi_{y} - \mu_{\varphi_{y}}.\]
\end{propintro}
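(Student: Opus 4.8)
The plan is to reduce the statement to a local construction over~$Y$ and then to a fibrewise dynamical computation. First I would work locally on~$X = Y \times_{\cA} \EP{1}{\cA}$, covering it by affinoid (or more generally good) open subsets over which the polarised endomorphism~$\varphi$ is given by a pair of homogeneous polynomials $(P,Q)$ in two variables with coefficients that are analytic functions on an open subset of~$Y$, the resultant $\Res(P,Q)$ being invertible there. On each such chart one lifts~$\varphi$ to a map $\widetilde\varphi = (P,Q)\colon \bA^2 \setminus \{0\} \to \bA^2 \setminus \{0\}$ over~$Y$, and one considers the Green function attached to~$\widetilde\varphi$, namely the uniform limit
\[ G_{\varphi} := \lim_{n\to+\infty} \frac{1}{d^n}\, \log\norm{\widetilde\varphi^{\,n}}, \]
where $\norm{\wc}$ is the sup-norm of the two coordinates. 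The key point is that the defining relation $\log\norm{\widetilde\varphi} = \log\norm{\wc} + O(1)$ holds with a bound that is \emph{locally uniform in~$y$}, because the coefficients of~$P,Q$ and the inverse of the resultant are continuous (indeed analytic) on~$Y$; hence the sequence converges uniformly on compact subsets of the chart and $G_{\varphi}$ is a continuous function on the total space, not merely fibrewise. Comparing $G_{\varphi}$ with the Green function of the squaring map (i.e.\ $\log\norm{\wc}$ itself, whose associated measure on each fibre is $\chi_{y}$) produces, after descending from $\bA^2\setminus\{0\}$ to $\EP{1}{\cA}$, a continuous function~$\lambda_{\varphi}$ on~$X$: on the chart, $\lambda_{\varphi}$ is the difference of the two Green functions, which is homogeneous of degree~$0$ and therefore descends; one checks the local pieces glue because the ambiguity in $G_{\varphi}$ between two charts is the difference of logarithms of invertible analytic functions, which is the same for the squaring map, so it cancels in~$\lambda_{\varphi}$.

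Next I would verify the laplacian identity fibre by fibre. For a fixed $y\in Y$ with $\cH(y)$ ultrametric, the restriction $G_{\varphi,\vert X_{y}}$ is exactly the homogeneous Green function of $\varphi_{y}$ used by Favre--Rivera-Letelier, and its "curvature" on $\EP{1}{\cH(y)}$ is $\mu_{\varphi_{y}}$; for the squaring map the corresponding curvature is $\delta_{\mathrm G}$. Hence $\Delta(\lambda_{\varphi,\vert X_{y}}) = \delta_{\mathrm G} - \mu_{\varphi_{y}} = \chi_{y} - \mu_{\varphi_{y}}$. The archimedean case is the classical one: $\lambda_{\varphi,\vert X_{y}}$ is the difference between the dynamical Green potential of~$\varphi_{y}$ and the Green potential $\log^+\abs{\wc}$ of the unit circle, so $\ddc$ of it equals $\mu_{\mathrm H} - \mu_{\varphi_{y}}$. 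That the same global function~$\lambda_{\varphi}$ realises these identities simultaneously on all fibres is precisely what we want, and it is forced by the construction. Some care is needed with the normalisations (probability measures, the degree-$d$ homogeneity, the choice of lift), and with the fact that changing the lift $\widetilde\varphi$ by scaling changes $G_{\varphi}$ by an additive constant along each fibre but not the laplacian; one fixes this by a standard choice (e.g.\ using a section, or by working projectively from the start with the metric on $\cO(1)$).

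The main obstacle, and the heart of the matter, is establishing the \emph{uniform} convergence of the normalised iterates $d^{-n}\log\norm{\widetilde\varphi^{\,n}}$ over a neighbourhood of an arbitrary point of~$Y$, including the archimedean--ultrametric interface. Pointwise on each fibre this is classical, but to get continuity of~$\lambda_{\varphi}$ on~$X$ one must control the error term $\log\norm{\widetilde\varphi} - \log\norm{\wc}$ by a constant that is bounded on a neighbourhood in~$Y$; this requires the "good Banach ring" hypotheses from section~\ref{sec:bonsanneaux}, which guarantee that the relevant analytic functions (the coefficients, and especially $1/\Res(P,Q)$) have sup-norms that are continuous and locally bounded on~$Y$, and that the two-variable sup-norm behaves well both in the ultrametric fibres (where $\log\norm{ab} = \log\norm{a} + \log\norm{b}$ up to the usual inequalities) and in the archimedean fibres (where one only has submultiplicativity, costing harmless explicit constants). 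Granting this, the Montel/telescoping argument $\sum_{n} d^{-n}\norm{d^{-1}\log\norm{\widetilde\varphi^{\,n+1}} - d^{-n}\log\norm{\widetilde\varphi^{\,n}}}_{\sup,K}$ converges uniformly on compacta, delivering a continuous~$G_{\varphi}$, hence a continuous~$\lambda_{\varphi}$, and the fibrewise laplacian computations above finish the proof. Finally, the continuity of $(\mu_{\varphi_{y}})_{y}$ (Theorem~\ref{th:continuiteintro}) follows from Proposition~\ref{prop:potentielintro} together with the continuity of $(\chi_{y})_{y}$ (Theorem~\ref{th:ACLintro}) by applying $\Delta$ in families, once one knows the latter; and $(\chi_{y})_{y}$ continuity is itself the special case $\varphi\colon z\mapsto z^2$ handled directly by the same Green-function bookkeeping, this time with $G_{\varphi} = \log\norm{\wc}$ literally constant in~$y$.
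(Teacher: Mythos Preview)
Your core approach to Proposition~\ref{prop:potentielintro} is essentially the paper's: your homogeneous Green function~$G_\varphi$ and the telescoping estimate are exactly Zhang's contraction on $\Met(\cO_X(1))$ (th\'eor\`eme~\ref{th:Phinconverge}), and your $\lambda_\varphi = G_\varphi - \log\norm{\wc}$ is the paper's $\lambda_\varphi(x) = \log(\norm{s}_{\varphi,x}) - \log(\norm{s}_{\st,x})$ in different notation. The one place your route deviates is the gluing: the paper works globally from the outset, because the isomorphism $\theta\colon \varphi^\ast\cO_X(1)\simto\cO_X(d)$ is a global datum, so $\nm_\varphi$ and hence $\lambda_\varphi$ are defined on all of~$X$ without patching (lemme~\ref{lem:lambdaphi}). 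Your sentence claiming that the chart ambiguity in~$G_\varphi$ ``is the same for the squaring map, so it cancels in~$\lambda_\varphi$'' is not correct as written --- the squaring map has a canonical lift and contributes no ambiguity to cancel against --- but your later aside ``or by working projectively from the start with the metric on~$\cO(1)$'' is precisely the right fix, and is what the paper does.

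There is, however, a genuine gap in your final paragraph. You assert that Theorem~\ref{th:continuiteintro} follows from Proposition~\ref{prop:potentielintro} and Theorem~\ref{th:ACLintro} ``by applying~$\Delta$ in families''. This does not work: the Laplacian is computed fibre by fibre, and there is no general principle that $y\mapsto \Delta(u_{\vert X_y})$ is a continuous family of measures for continuous~$u$ on~$X$. The paper's proof of th\'eor\`eme~\ref{th:muphicontinue} needs an additional idea: one tests against \emph{affable} functions~$f$ (d\'efinitions~\ref{def:affablebasique}--\ref{def:affable}), uses the symmetry $\int f\,\diff\Delta\rho_n = \int \rho_n\,\diff\Delta f$ (proposition~\ref{prop:symetrieDelta}) with $\rho_n = \lambda_\varphi - \lambda_n$, and invokes the locally uniform mass bound $\sup_{y\in K}\int\diff\abs{\Delta f_{\vert X_y}} < \infty$ (proposition~\ref{prop:borneaffable}) to upgrade the pointwise convergence $\mu_{n,y}\to\mu_{\varphi,y}$ of proposition~\ref{prop:convergenceXy} to \emph{uniform} convergence of $y\mapsto\int f\,\diff\mu_{n,y}$ on compacta. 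Likewise, Theorem~\ref{th:ACLintro} is not the ``$z\mapsto z^2$'' special case of this argument (that would be circular, since there $\lambda_\varphi\equiv 0$ and the conclusion reduces to Theorem~\ref{th:ACLintro} itself); the paper proves it directly by a compactness-of-probability-measures and support argument (th\'eor\`eme~\ref{th:chicontinue}).
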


La pr\'ecision sur les potentiels est particuli\`erement int\'eressante lorsqu'on cherche \`a \'etudier des accouplements de mesures au sens de C.~Favre et J.~Rivera--Letelier. Dans l'article~\cite{FRLEquidistribution}, ces auteurs d\'efinissent, pour des mesures de probabilit\'es~$\mu_{1}$ et~$\mu_{2}$ suffisamment r\'eguli\`eres, un accouplement $\la \mu_{1}, \mu_{2} \ra$ qui se comporte comme une distance entre ces deux mesures (ou plut\^ot son carr\'e). Il peut \^etre obtenu en int\'egrant un potentiel de $\mu_{1}-\mu_{2}$ par rapport \`a cette m\^eme mesure. 

\begin{corointro}\label{cor:energieintro}
Soit $Y$ un espace de Berkovich sur un bon anneau de Banach~$\cA$. Notons $X := Y \times_{\cA} \EP{1}{\cA}$ la droite projective relative au-dessus de~$Y$. Soient~$\varphi$ et~$\psi$ des endomorphismes polaris\'es de~$X$ au-dessus de~$Y$ de degr\'e sup\'erieur \`a~2. 

Pour tout point~$y$ de~$Y$, notons $\mu_{\varphi_{y}}$ (resp. $\mu_{\psi_{y}}$) la mesure d'\'equilibre associ\'ee \`a l'endomorphisme~$\varphi_{y}$ (resp. $\psi_{y}$) induit par~$\varphi$ (resp. $\psi$) sur la fibre $X_{y} \simeq \EP{1}{\cH(y)}$. 

Alors, la fonction $y\in Y \mapsto \la \mu_{\varphi_{y}}, \mu_{\psi_{y}} \ra$ est continue.
\end{corointro}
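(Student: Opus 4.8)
The plan is to deduce Corollaire~\ref{cor:energieintro} from the joint continuity statement of Proposition~\ref{prop:potentielintro}, exploiting the fact that the Favre--Rivera-Letelier pairing is computed by integrating a potential of the difference of the two measures against that same difference. Fix~$y\in Y$. By Proposition~\ref{prop:potentielintro}, applied to~$\varphi$ and to~$\psi$, there are continuous functions $\lambda_{\varphi},\lambda_{\psi}\colon X\to\R$ whose restrictions to the fibre~$X_{y}$ satisfy $\Delta(\lambda_{\varphi,\vert X_{y}})=\chi_{y}-\mu_{\varphi_{y}}$ and $\Delta(\lambda_{\psi,\vert X_{y}})=\chi_{y}-\mu_{\psi_{y}}$. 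Subtracting, the continuous function $g_{y}:=\lambda_{\varphi,\vert X_{y}}-\lambda_{\psi,\vert X_{y}}$ has laplacian $\Delta g_{y}=\mu_{\psi_{y}}-\mu_{\varphi_{y}}$ on~$X_{y}$, so $g_{y}$ is precisely a potential of the signed measure $\mu_{\varphi_{y}}-\mu_{\psi_{y}}$ (of total mass zero). By the defining formula of the pairing in~\cite{FRLEquidistribution}, one then has, up to the normalisation convention,
\[ \la \mu_{\varphi_{y}}, \mu_{\psi_{y}} \ra = -\int_{X_{y}} g_{y} \, \diff(\mu_{\varphi_{y}}-\mu_{\psi_{y}}) = \int_{X_{y}} (\lambda_{\psi}-\lambda_{\varphi}) \, \diff(\mu_{\varphi_{y}}-\mu_{\psi_{y}}). \]
It therefore suffices to prove that $y\mapsto \int_{X_{y}} h_{\vert X_{y}}\,\diff\mu_{\varphi_{y}}$ and $y\mapsto\int_{X_{y}} h_{\vert X_{y}}\,\diff\mu_{\psi_{y}}$ are continuous for $h\in\{\lambda_{\varphi},\lambda_{\psi}\}$, i.e. for a \emph{fixed} continuous function on the total space~$X$; this is exactly what Theorem~\ref{th:continuiteintro} gives, except that $\lambda_{\varphi}$ and $\lambda_{\psi}$ need not have compact support.

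To handle the support issue, I would work locally on~$Y$: since continuity is a local property, replace~$Y$ by a compact neighbourhood~$V$ of a given point, so that $X_{V}=V\times_{\cA}\EP{1}{\cA}$ is compact (the projective line being proper over~$\cA$), and $\lambda_{\varphi},\lambda_{\psi}$ restricted to~$X_{V}$ are continuous with compact support. Then Theorem~\ref{th:continuiteintro} applies verbatim on~$V$ to each of the four integrals $y\mapsto\int \lambda_{\bullet,\vert X_{y}}\,\diff\mu_{\star_{y}}$, $\bullet,\star\in\{\varphi,\psi\}$, and their combination $y\mapsto\int_{X_{y}}(\lambda_{\psi}-\lambda_{\varphi})\,\diff(\mu_{\varphi_{y}}-\mu_{\psi_{y}})$ is continuous on~$V$, hence on all of~$Y$.

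The one genuine point to check carefully is that the quantity appearing above really coincides with the pairing $\la\mu_{\varphi_{y}},\mu_{\psi_{y}}\ra$ of~\cite{FRLEquidistribution} on each fibre, uniformly across the archimedean and ultrametric fibres. In the ultrametric case the pairing of two sufficiently regular probability measures is defined fibrewise on $\EP{1}{\cH(y)}$ and is symmetric; one needs that $\mu_{\varphi_{y}}$ and $\mu_{\psi_{y}}$ lie in the class of measures for which the pairing is defined (they have continuous potentials by Proposition~\ref{prop:potentielintro}, which is the relevant regularity), and that the pairing is insensitive to the choice of potential — this follows because any two potentials of the same mass-zero measure differ by a function with zero laplacian, which on $\EP{1}{\cH(y)}$ is constant, and constants integrate to zero against a mass-zero measure. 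The archimedean fibres $\P^{1}(\C)$ (and $\P^{1}(\C)/\mathrm{conj}$) are the classical setting where this potential-theoretic identity is standard. Thus the only mild obstacle is bookkeeping: making sure the fibrewise definition of the pairing, its symmetry, and its computation via a total-space potential are uniform in~$y$; once that is in place, the corollary is an immediate consequence of Theorems~\ref{th:continuiteintro} and the local properness of~$X$ over~$Y$.
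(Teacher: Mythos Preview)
Your proof is correct and follows essentially the same route as the paper: compute $\Delta(\lambda_{\psi}-\lambda_{\varphi})=\mu_{\varphi_y}-\mu_{\psi_y}$ via Proposition~\ref{prop:potentielintro}, identify the pairing as $\int(\lambda_{\psi}-\lambda_{\varphi})\diff(\mu_{\varphi_y}-\mu_{\psi_y})$, and conclude by the continuity of Theorem~\ref{th:continuiteintro} and Proposition~\ref{prop:potentielintro}. You are in fact more careful than the paper about two points it leaves implicit --- the compact-support issue (handled by localising to a compact~$V$ in~$Y$, over which $X_V$ is compact) and the well-definedness of the pairing formula across fibre types --- but the underlying argument is identical.
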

\begin{proof}
Consid\'erons des fonctions continues~$\lambda_{\varphi}$ et~$\lambda_{\psi}$ comme dans la proposition~\ref{prop:potentielintro}. Pour tout $y\in Y$, on a 
\[ \Delta(\lambda_{\psi,\vert X_{y}} - \lambda_{\varphi,\vert X_{y}}) = (\chi_{y} - \mu_{\psi_{y}}) - (\chi_{y} - \mu_{\varphi_{y}}) = \mu_{\varphi_{y}} - \mu_{\psi_{y}},\]
 d'o\`u
 \[ \la \mu_{\varphi_{y}}, \mu_{\psi_{y}} \ra = \int (\lambda_{\psi,\vert X_{y}} - \lambda_{\varphi,\vert X_{y}}) \diff (\mu_{\varphi_{y}} - \mu_{\psi_{y}}).\]
L'\'enonc\'e d\'ecoule alors des r\'esultats de continuit\'e pr\'ec\'edents (th\'eor\`eme~\ref{th:continuiteintro} et proposition~\ref{prop:potentielintro}).
\end{proof}

%

\medbreak

Le th\'eor\`eme~\ref{th:continuiteintro} s'inspire directement du r\'esultat principal de l'article~\cite{FavreEndomorphisms}. Charles Favre y d\'emontre un r\'esultat de continuit\'e de mesures d'\'equilibre pour un espace projectif de dimension quelconque au-dessus d'une base~$Y$ fix\'ee~: le disque hybride (un avatar du disque unit\'e complexe de dimension~1 muni d'un point ultram\'etrique en son centre, r\'ealis\'e comme un espace de Berkovich sur un anneau de Banach dit hybride). Notre th\'eor\`eme autorise une plus grande vari\'et\'e dans la direction horizontale. 

Dans la direction verticale, en revanche, nous nous cantonnons \`a la dimension~1, faute de disposer des outils n\'ecessaires en th\'eorie du potentiel. Les d\'emonstrations de~\cite{FavreEndomorphisms} font en effet intervenir de fa\c con cruciale des r\'esultats analytiques subtils~: th\'eorie de Bedford-Taylor, in\'egalit\'es de Chern-Levine-Nirenberg, etc., dont les analogues ultram\'etriques sont largement conjecturaux \`a l'heure actuelle. Le travail fondateur~\cite{ChambertLoirDucros} d'A.~Chambert--Loir et A.~Ducros propose une d\'efinition de formes diff\'erentielles r\'eelles et de courants dans ce cadre, mais les propri\'et\'es plus fines requises ici ne sont pas encore disponibles.

Indiquons que d'autres r\'esultats de convergence de mesures depuis l'archim\'edien vers l'ultram\'etrique existent dans la litt\'erature~: \cite{BJ} (par des techniques tropicales), \cite{DHL} (par la th\'eorie des mod\`eles), etc. Ils ont tous pour objet des familles \`a un param\`etre et le th\'eor\`eme~\ref{th:continuiteintro} appara\^it donc, \`a notre connaissance, comme le premier de ce genre \`a valoir sur une base de dimension sup\'erieure.  

\medbreak

Dans le texte compagnon \cite{DynamiqueII}, nous appliquons les r\'esultats de continuit\'e obtenus ici \`a un espace de modules (de dimension~5) de paires de courbes elliptiques. Nous en tirons une d\'emonstration de l'\'enonc\'e suivant, initialement conjectur\'e par F.~Bogomolov, H.~Fu et Yu.~Tschinkel (\cf~\cite[conjectures~2 et~12]{BFT}).

\begin{theointro}\label{th:BFT}
Il existe $M \in \R_{>0}$ telle que, pour toutes courbes elliptiques~$E_{a}$ et~$E_{b}$ sur~$\C$ et tous rev\^etements doubles $\pi_{a} \colon E_{a}\to \P^1_{\C}$ et $\pi_{b} \colon E_{b}\to \P^1_{\C}$ tels que $\pi_{a}(E_{a}[2]) \ne \pi_{b}(E_{b}[2])$, on ait
\[ \sharp \big( \pi_{a}(E_{a}[\infty]) \cap \pi_{b}(E_{b}[\infty])\big) \le M.  \]
\end{theointro}

\subsection{Strat\'egie de la preuve du th\'eor\`eme~\ref{th:continuiteintro}}

Soit $\cA$ un bon anneau de Banach. 
Soit~$Y$ un espace de Berkovich sur~$\cA$ et notons $X := Y \times_{\cA} \EP{1}{\cA}$ la droite projective relative au-dessus de~$Y$. 

Notre premi\`ere \'etape consiste \`a d\'emontrer le th\'eor\`eme~\ref{th:ACLintro}, incarnation de l'analogie d'A.~Chambert--Loir, qui assure la continuit\'e de la famille constitu\'ee de mesures de Haar sur le cercle, pour la partie archim\'edienne, et de mesures de Dirac au point de Gau\ss, pour la partie ultram\'etrique.


%
%
%

Pour la preuve, on consid\'ere une suite (ou une suite g\'en\'eralis\'ee) convergente $(y_{n})_{n\in\N}$ de points de~$Y$ de limite~$y$. Il faut montrer que la suite de mesures~$(\chi_{y_{n}})_{n\in\N}$ converge vers~$\chi_{y}$. Le cas crucial est celui o\`u les points~$y_{n}$ sont archim\'ediens et le point~$y$ ultram\'etrique. Rappelons que les espaces de Berkovich sont localement compacts et que l'espace des mesures de probabilit\'e sur un compact est lui-m\^eme compact. On en d\'eduit qu'il suffit de montrer que toute valeur d'adh\'erence de la suite~$(\chi_{y_{n}})_{n\in\N}$ co\"incide avec $\chi_{y} = \delta_{G}$, autrement dit qu'elle est support\'ee au point de Gau\ss, ce qu'on peut prouver directement.

%

Ajoutons que nous d\'emontrons une version raffin\'ee du r\'esultat, o\`u le centre et le rayon des cercles sont autoris\'es \`a varier contin\^ument sur~$Y$.

\medbreak

Passons maintenant au th\'eor\`eme~\ref{th:continuiteintro}. Soit~$\varphi$ un endomorphisme de~$X$ au-dessus de~$Y$ polaris\'e de degr\'e~$d \ge 2$. Rappelons, tout d'abord, que, pour tout $y\in Y$, la mesure d'\'equilibre associ\'ee \`a~$\varphi_{y}$ sur $X_{y} \simeq \EP{1}{\cH(y)}$ peut \^etre obtenue en tirant en arri\`ere la mesure~$\chi_{y}$~:
\[ \mu_{\varphi_{y}} = \lim_{n\to +\infty} \frac1{d^n} (\varphi_{y}^*)^n\chi_{y}.\]
Cette remarque sugg\`ere une strat\'egie de preuve. \`A partir du th\'eor\`eme~\ref{th:ACLintro}, en tirant en arri\`ere par~$\varphi$ de fa\c con r\'ep\'et\'ee, on montre que, pour tout $n\in \N$, la famille de mesures $(\mu_{n,y} := \frac1{d^n}(\varphi_{y}^*)^n\chi_{y})_{y\in Y}$ est encore continue. Il reste \`a passer \`a la limite. 

%

Pour ce dernier point, nous avons recours \`a des arguments de th\'eorie de potentiel. En utilisant un raisonnement classique d\^u \`a S.~W.~Zhang (\cf~\cite{ZhangSmallPoints}), on construit une suite de fonctions $(u_{n})_{n\in \N}$ sur~$X$ convergeant uniform\'ement vers une fonction~$u_{\varphi}$ et dont les potentiels sur la fibre~$X_{y}$ sont respectivement la suite $(\mu_{n,y})_{n\in \N}$ et~$\mu_{\varphi_{y}}$. (Ces fonctions sont obtenues \`a partir de la norme d'une section du fibr\'e~$\cO(1)$ convenablement m\'etris\'e.)

Soit $f \colon X \to \R$ continue \`a support compact. Sous des hypoth\`eses convenables, pour tout $y\in Y$ et tout $n\in \N$, on peut \'ecrire
\begin{align*}
\Bigabs{ \int f_{\vert X_{y}} \diff\mu_{\varphi_{y}} -  \int f_{\vert X_{y}} \diff \mu_{n,y}} & = \Bigabs{\int f_{\vert X_{y}} \diff\Delta (u_{\varphi,_{\vert X_{y}}} - u_{n,_{\vert X_{y}}})} \\
& = \Bigabs{\int (u_{\varphi,_{\vert X_{y}}} - u_{n,_{\vert X_{y}}}) \diff \Delta f_{\vert X_{y}}} \\
& \le \norm{u_{\varphi} - u_{n}}_{X_{y}} \int  \diff\abs{\Delta f_{\vert X_{y}}}.
\end{align*}
Si la fonction $y\mapsto \int  \diff\abs{\Delta f_{\vert X_{y}}}$ est born\'ee sur tout compact, alors la suite de fonctions $(y\mapsto \int f_{\vert X_{y}} \diff \mu_{n,y})_{n\in \N}$ converge uniform\'ement vers $y \mapsto \int f_{\vert X_{y}} \diff\mu_{\varphi_{y}}$ sur tout compact. La continuit\'e de cette derni\`ere en d\'ecoule.

La strat\'egie expos\'ee ci-dessus ne s'applique que sous certaines hypoth\`eses sur~$f$ et une partie importante de notre travail consiste \`a identifier un sous-ensemble dense de fonctions continues \`a support compact poss\'edant les propri\'et\'es requises. Nous introduisons \`a cet effet la notion de fonction \emph{macologue}, inspir\'ee de la notion de fonction mod\`ele d\'efinie par Ch.~Favre dans~\cite{FavreEndomorphisms}. Il s'agit de fonctions pouvant s'\'ecrire localement comme diff\'erences de fonctions de la forme 
\[\max(p_{1} + q_{1}\log(\abs{g_{1}}), \dotsc, p_{n} + q_{n}\log(\abs{g_{n}})),\] 
o\`u $p_{1},\dotsc,p_{n}$ sont des nombres rationnels, $q_{1},\dotsc,q_{n}$ des nombres rationnels positifs et $g_{1},\dotsc,g_{n}$ des fonctions analytiques (\cf~d\'efinitions \ref{def:affablebasique} et \ref{def:affable} pour les d\'etails).

\subsection{Organisation du texte}

Le texte est d\'ecoup\'e en 5 sections. Les trois premi\`eres sont essentiellement compos\'ees de rappels. Dans la section~\ref{sec:Berkovich}, nous abordons la th\'eorie, encore exotique, des espaces de Berkovich sur un anneau de Banach, en rappelant d\'efinitions et propri\'et\'es de base. Nous insistons sur la notion de flot, action du mono\"ide~$\intof{0,1}$ correspondant \`a l'\'elevation d'une valeur absolue \`a une puissance. C'est par son interm\'ediaire que l'on peut faire d\'eg\'en\'erer des familles d'espaces sur des espaces au-dessus de corps trivialement valu\'es, ce qui jouera un r\^ole crucial pour des applications ult\'erieures.

Dans la section~\ref{sec:Radon}, nous consid\'erons des mesures de Radon sur des espaces de Berkovich et \'etudions notamment les op\'erations d'image directe et r\'eciproque dans ce cadre. Nous nous appuyons de fa\c con essentielle sur les bonnes propri\'et\'es topologiques des espaces de Berkovich sur certains anneaux de Banach d\'emontr\'ees dans~\cite{CTC}.

Dans la section~\ref{sec:fibresmetrises}, nous adaptons la notion de fibr\'e m\'etris\'e issue de la th\'eorie d'Arakelov au cadre des espaces analytiques sur un anneau de Banach au sens de Berkovich. Nous montrons que l'espace des fibr\'es m\'etris\'es peut \^etre muni d'une structure uniforme pour laquelle il est complet et rappelons, suivant~\cite{ZhangSmallPoints}, comment construire une m\'etrique invariante par un syst\`eme dynamique polaris\'e donn\'e.

La section~\ref{sec:Laplacien} contient des rappels de th\'eorie du potentiel sur la droite projective au-dessus d'un corps valu\'e complet, archim\'edien ou non. L'un des ingr\'edients essentiels, d'int\'er\^et ind\'ependant, est l'existence, pour une fonction sous-harmonique~$u$ sur un disque~$D$, d'une majoration de la forme 
\[\int_{D} \diff \Delta u \le C_{D,D'}\, \norm{u}_{D'},\]
o\`u~$D'$ est un disque strictement plus grand que~$D$ et $C_{D,D'}$ un nombre r\'eel qui ne d\'epend que des rayons des disques et pas du corps de base. C'est cette derni\`ere propri\'et\'e qui permet de borner la masse totale du laplacien d'une fonction macologue, ingr\'edient sur lequel repose la convergence uniforme de la famille de mesures~$(\mu_{n,y})_{n\in \N}$, comme expliqu\'e plus haut.

Dans la section finale~\ref{sec:droiteprojectiverelative}, nous d\'emontrons les r\'esultats annonc\'es, en commen\c cant par le th\'eor\`eme~\ref{th:ACLintro}. Nous introduisons ensuite les fonctions macologues et concluons par la d\'emonstration du th\'eor\`eme~\ref{th:continuiteintro}.

\subsection{Remerciements}

Je remercie Charles Favre et Marco Maculan pour de fructueux \'echanges autour de la th\'eorie du potentiel, ainsi que Dorian Berger pour ses commentaires. Merci \'egalement aux rapporteurs et rapporteuses des diff\'erentes versions. Ce texte est impr\'egn\'e de l'influence d'Antoine Chambert--Loir. Je saisis l'occasion ainsi offerte de lui exprimer ma gratitude sinc\`ere pour son soutien et la g\'en\'erosit\'e avec laquelle il partage ses id\'ees. 

Une partie de ce texte a \'et\'e r\'edig\'ee 
pendant un s\'ejour de l'auteur \`a Francfort. Il remercie l'universit\'e Goethe et ses membres pour les excellentes conditions d'accueil dont il a b\'en\'efici\'e, ainsi que la Deutsche Forschungsgemeinschaft (TRR 326 Geometry and Arithmetic of Uniformized Structures, project number 444845124) pour son soutien financier. 


\pagebreak

\begin{center} \textbf{Conventions} \end{center}

Soient $X,Y$ des ensembles. On note $\cF(X,Y)$ l'ensemble des applications de~$X$ dans~$Y$. Pour $f \in \cF(X,\R)$, on pose
\[\norm{f}_{X} := \sup_{x\in X}(\abs{f(x)}).\]

Soient $X,Y$ des espaces topologiques. On note~$\cC(X,Y)$ (resp. $\cC_{c}(X,Y)$) l'ensemble des applications continues (resp. \`a support compact) de~$X$ dans~$Y$.

Soit $T$ une partie de~$X$. On note $\overline T$ son adh\'erence, $\mathring T$ son int\'erieur et $\partial T := \overline T \setminus \mathring T$ son bord.

\section{Espaces de Berkovich sur un anneau de Banach}\label{sec:Berkovich}

Cette section est consacr\'ee aux fondements de la th\'eorie de Berkovich sur les anneaux de Banach. Nous commen\c cons par rappeler les d\'efinitions (section~\ref{sec:Berkovichdefinition} puis donnons des exemples d'anneaux de Banach sur lesquels appliquer la th\'eorie (section~\ref{sec:Berkovichexemples}). Nous terminons en consid\'erant le flot, une op\'eration naturelle sur les espaces de Berkovich utile dans l'\'etude des d\'eg\'en\'erescences (section~\ref{sec:flot}).

\subsection{D\'efinitions}\label{sec:Berkovichdefinition}

Dans cette section, nous rappelons la d\'efinition d'espace analytique sur un anneau de Banach au sens de Berkovich (\cf~\cite[Section~1.5]{rouge}). 

\medbreak

Soit $(\cA,\nm)$ un anneau de Banach. Int\'eressons-nous tout d'abord \`a l'\emph{espace affine analytique de dimension~$n$ sur~$\cA$}, not\'e~$\E{n}{\cA}$. C'est un espace localement annel\'e que nous d\'efinirons en trois temps~: ensemble sous-jacent, topologie, faisceau structural.

$\bullet$ \textit{Ensemble~:} L'ensemble sous-jacent \`a $\E{n}{\cA}$ est l'ensemble des semi-normes multiplicatives sur $\cA[T_{1},\dotsc,T_{n}]$ qui sont born\'ees sur~$\cA$, c'est-\`a-dire l'ensemble des applications 
\[\va \colon \cA[T_{1},\dotsc,T_{n}] \too \R_{\ge 0}\]
qui satisfont les propri\'et\'es suivantes~:
\begin{enumerate}[i)]
\item $|0|=0$ et $|1|=1$;
\item $\forall P,Q \in \cA[T_{1},\dotsc,T_{n}]$, $|P+Q| \le |P| + |Q|$;
\item $\forall P,Q \in \cA[T_{1},\dotsc,T_{n}]$, $|PQ| = |P|\, |Q|$;
\item $\forall a \in \cA$, $|a| \le \|a\|$.
\end{enumerate}

On pose $\cM(\cA) := \E{0}{\cA}$ et on l'appelle \emph{spectre} de~$\cA$. Lorsque $\cA$ n'est pas nul, $\cM(\cA)$ et $\E{n}{\cA}$ ne sont pas vides.

Pour tout $m\in\cn{0}{n}$, le morphisme d'inclusion $\cA[T_{1},\dotsc,T_{m}] \to \cA[T_{1},\dotsc,T_{n}]$ induit une application $\pr_{n,m} \colon \E{n}{\cA} \to \E{m}{\cA}$, dite de \emph{projection sur les $m$ premi\`eres coordonn\'ees}.

Soit $x\in \E{n}{\cA}$. Notons~$\va_{x}$ la semi-norme multiplicative associ\'ee \`a~$x$. L'anneau $\cA[T_{1},\dotsc,T_{n}]/\ker(\va_{x})$ est int\`egre. La semi-norme~$\va_{x}$ induit une valeur absolue sur $\Frac(\cA[T_{1},\dotsc,T_{n}]/\ker(\va_{x}))$ et on peut consid\'erer le compl\'et\'e de ce corps, not\'e~$\cH(x)$. Nous noterons simplement~$\va$ la valeur absolue sur~$\cH(x)$ induite par~$\va_{x}$.

On a un morphisme naturel $\chi_{x}\colon \cA[T_{1},\dotsc,T_{n}] \to \cH(x)$, dit d'\emph{\'evaluation}. Pour tout $P \in \cA[T_{1},\dotsc,T_{n}]$, on pose $P(x) := \chi_{x}(P)$. On a alors $|P(x)| = |P|_{x}$, par d\'efinition.

$\bullet$ \textit{Topologie~:} On munit l'ensemble $\E{n}{\cA}$ de la topologie la plus grossi\`ere telle que, pour tout $P \in \cA[T_{1},\dotsc,T_{n}]$, l'application
\[x \in \E{n}{\cA} \mapstoo |P(x)| \in \R_{\ge0}\] 
soit continue. L'espace topologique ainsi obtenu est s\'epar\'e et localement compact. Le spectre~$\cM(\cA)$ est compact. 

L'application de projection~$\pr_{\cA}$ est continue et surjective et, pour tout $b\in \cM(\cA)$, on a un hom\'eomorphisme canonique
\[\E{n}{\cH(b)} \simto \pr_{\cA}^{-1}(b).\]

$\bullet$ \textit{Faisceau structural~:} Pour tout ouvert~$V$ de~$\E{n}{\cA}$, on note~$S_{V}$ l'ensemble des \'el\'ements de $\cA[T_{1},\dotsc,T_{n}]$ qui ne s'annulent pas sur~$V$ et on pose $\cK(V) := S_{V}^{-1} \cA[T_{1},\dotsc,T_{n}]$. 

Soit~$U$ un ouvert de~$\E{n}{\cA}$. On d\'efinit~$\cO(U)$ comme l'ensemble des applications
\[f \colon U \too \bigsqcup_{x\in U} \cH(x)\]
telles que
\begin{enumerate}[i)]
\item pour tout $x\in U$, $f(x)\in \cH(x)$ ;
\item tout $x\in U$ poss\`ede un voisinage ouvert~$V$ sur lequel~$f$ est limite uniforme d'\'el\'ements de~$\cK(V)$. 
\end{enumerate}

\medbreak

Nous pouvons maintenant \'enoncer la d\'efinition g\'en\'erale d'espace analytique sur~$\cA$. Un \emph{mod\`ele local d'espace $\cA$-analytique} et un espace localement annel\'e de la forme $(V(\cI),\cO_{U}/\cI)$, o\`u~$U$ est un ouvert d'un espace affine analytique~$\E{N}{\cA}$ et $\cI$~est un faisceau d'id\'eaux coh\'erent de~$\cO_{U}$. Un \emph{espace $\cA$-analytique} est un espace localement annel\'e qui est localement isomorphe \`a un mod\`ele local d'espace $\cA$-analytique. Un tel espace~$X$ poss\`ede un morphisme naturel $\pr_{\cA} \colon X \to \cM(\cA)$ et, pour tout $b\in \cM(\cA)$, la fibre $\pr_{\cA}^{-1}(b)$ peut naturellement \^etre munie d'une structure d'espace analytique sur~$\cH(b)$.

On dispose de peu de r\'esultats dans cette g\'en\'eralit\'e. Signalons tout de m\^eme que tout espace $\cA$-analytique est localement compact.

\medbreak

On peut r\'ep\'eter de nombreuses constructions classiques dans ce contexte. On construit, par exemple, un \emph{espace projectif analytique de dimension~$n$ sur~$\cA$}, not\'e $\EP{n}{\cA}$, en recollant $n+1$ copies de~$\E{n}{\cA}$ selon le proc\'ed\'e habituel.

Consid\'erons un mod\`ele local d'espace $\cA$-analytique $Z = (V(\cI),\cO_{U}/\cI)$, avec les m\^emes notations que pr\'ec\'edemment. Consid\'erons l'application $\pr_{N+n,N} \colon \E{N+n}{\cA} \to \E{N}{\cA}$ de projection sur les $n$ derni\`eres coordonn\'ees. Posons $U' := \pr_{N+n,N}^{-1}(U)$ et $\cI' := \pr_{N+n,N}^*\cI$, qui est un faisceau d'id\'eaux coh\'erent de~$\cO_{U'}$. L'espace $(V(\cI'),\cO_{U'}/\cI')$ est appel\'e \emph{espace affine relatif de dimension~$n$ sur~$Z$} et not\'e~$\E{n}{Z}$. 

La construction pr\'ec\'edente se recolle et permet de d\'efinir, pour tout espace $\cA$-analytique~$X$, un espace affine relatif de dimension~$n$ sur~$X$ not\'e~$\E{n}{X}$. On d\'efinit de m\^eme un espace projectif relatif de dimension~$n$ sur~$X$ not\'e~$\EP{n}{X}$.

\medbreak

Le lemme technique suivant sera utile dans la suite du texte.

\begin{lemm}\label{lem:localisationBanach}
Soit~$X$ un espace $\cA$-analytique. Soient $x\in X$ et $f_{1},\dotsc,f_{d} \in \cO(X)$. Alors, il existe un voisinage compact~$V$ de~$x$ dans~$X$, une $\cA$-alg\`ebre de Banach~$\cB$, une application $\varphi \colon \cM(\cB) \to V$ au-dessus de~$\cM(\cA)$ et des \'el\'ements $F_{1},\dotsc,F_{d}$ de~$\cB$ tels que
\begin{enumerate}[i)]
\item $\varphi$ soit un hom\'eomorphisme ;
\item pour tout $y \in V$, le morphisme $\psi_{y} \colon \cH(\varphi(y)) \to \cH(y)$ induit par $\varphi$ soit un isomorphisme isom\'etrique ;
\item pour tout $i\in \cn{1}{d}$ et tout $y \in V$, on ait $\psi_{y}(f_{i}(\varphi(y))) = F_{i}(y)$.
\end{enumerate} 
Pour tout $n\in \N$, notons $\E{n}{V}$ (resp. $\EP{n}{V}$) l'image r\'eciproque de~$V$ par la projection $\E{n}{X} \to X$ (resp. $\EP{n}{X} \to X$). La propri\'et\'e ii) entra\^ine que, pour tout $n\in \N$, l'application~$\varphi$ induit des applications $\E{n}{\cB} \to \E{n}{V}$ et $\EP{n}{\cB} \to \EP{n}{V}$. 

On peut construire~$\varphi$ de fa\c con \`a avoir \'egalement
\begin{enumerate}[i)]
\item[iv)] pour tout $n\in \N$, les applications $\E{n}{\cB} \to \E{n}{V}$ et $\EP{n}{\cB} \to \EP{n}{V}$ induites par~$\varphi$ sont des hom\'eomorphismes.
\end{enumerate} 
\end{lemm}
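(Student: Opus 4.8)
The plan is to reduce the statement to a concrete local computation in an affine analytic space, where one can write down an explicit Banach algebra, and then to verify each of the four assertions i)--iv) in turn. First I would use that $X$ is, locally near $x$, isomorphic to a closed analytic subspace $V(\cI)$ of an open subset $U$ of some $\E{N}{\cA}$, and that $f_1,\dotsc,f_d \in \cO(X)$ are, in a neighbourhood of $x$, uniform limits of elements of the localized polynomial ring $\cK(W)$ for suitable opens $W$. After shrinking, one may assume $x$ has a \emph{rational polyhedral} (or simply a ``Laurent-domain'') compact neighbourhood $V$ inside $U$, cut out by finitely many conditions of the form $\{\,p_j \ne 0,\ q_\ell \text{ bounded}\,\}$ on coordinates. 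The natural candidate for $\cB$ is then the completion of $\bigl(\cA[T_1,\dotsc,T_N]/\cI_x\bigr)$, suitably localized and completed with respect to the sup-norm over $V$ (equivalently, an affinoid-type algebra: an $\cA$-Banach algebra of ``convergent series'' on $V$). One also needs to enlarge the variable set so that the $f_i$ themselves become coordinates, i.e. work with $\cA[T_1,\dotsc,T_N,S_1,\dotsc,S_d]/(\cI, S_i - \widetilde{f_i})$ where $\widetilde{f_i}$ are polynomial (or convergent) approximations of $f_i$; passing to the limit in the $f_i$ gives elements $F_i \in \cB$ with the matching property iii).

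The second step is to prove i) and ii), namely that $\varphi\colon \cM(\cB)\to V$ is a homeomorphism realizing $\cB$ as ``the ring of functions on $V$'' with isometric residue fields. The map $\varphi$ sends a bounded multiplicative seminorm on $\cB$ to its restriction along $\cA[T_1,\dotsc,T_N]\to\cB$; injectivity and surjectivity follow from the universal property of the localization-completion defining $\cB$ (a bounded multiplicative seminorm on $\cA[T_i]$ supported on $V$ extends uniquely and continuously to $\cB$, because on $V$ the relevant denominators are invertible with bounded inverse and the approximating series converge). That $\varphi$ is a homeomorphism onto its image $V$ (a compact subset) uses that both spaces are compact Hausdorff and $\varphi$ is a continuous bijection. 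The isometry statement ii) is then essentially tautological: at a point $y$, $\cH(y)$ is by construction the completed residue field of $\cB$ at $y$, which coincides with the completed residue field of $\cK(V)$ (hence of $\cO_X$) at $\varphi(y)$, because $\cB$ is a completion of a localization of the coordinate ring and completion/localization by non-vanishing elements does not change the residue field.

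The third step, assertion iv), is the delicate one and I expect it to be the main obstacle. One knows a priori only that $\varphi$ induces \emph{maps} $\E{n}{\cB}\to\E{n}{V}$ and $\EP{n}{\cB}\to\EP{n}{V}$; to get homeomorphisms one must choose $V$ (equivalently $\cB$) carefully, so that these relative affine/projective spaces ``see the same seminorms''. Concretely, a point of $\E{n}{V}$ is a bounded multiplicative seminorm on $\cO_X(W)[U_1,\dotsc,U_n]$ for $W\ni\varphi(\cdot)$, and one wants it to extend uniquely to $\cB[U_1,\dotsc,U_n]$ — but the sup-norm on $V$ used to define $\cB$ need not control the seminorm on an arbitrary point lying over $V$ with large $\abs{U_j}$. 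The fix, following the standard affinoid picture, is to take $V$ to be not merely a compact neighbourhood but a \emph{strictly affinoid-type} neighbourhood of $x$: one arranges that $\cB$ is \emph{$\cA$-affinoid} in a sense guaranteeing that $\cM(\cB[U_1,\dotsc,U_n])$ — the points of $\E{n}{\cB}$ — surjects onto $\E{n}{V}$ with the right topology, by an argument of the type ``relative disc of all radii over an affinoid base is the colimit of affinoid polydiscs, and restriction of seminorms is compatible with this colimit''. I would phrase iv) by passing to an exhaustion of $\E{n}{V}$ (resp. $\EP{n}{V}$) by compact sets of the form $\{\abs{U_j}\le \rho\}$ over $V$, showing $\varphi$ induces a homeomorphism on each, and taking the union; the projective case follows from the affine case by the usual gluing of the $n+1$ charts. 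The technical heart is thus producing a $\cB$ for which these relative statements hold — and here I would lean on the good topological properties of Berkovich spaces over good Banach rings (compactness, local compactness, and the behaviour of $\cM$ under completed localization) recalled in the excerpt, rather than developing affinoid theory from scratch.
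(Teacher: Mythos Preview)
Your overall architecture is the same as the paper's: pass to a local model $V(\cI)\subset U\subset\E{N}{\cA}$, take a rational compact neighbourhood $W$ of the image of~$x$ in~$\E{N}{\cA}$, let $\cB(W)$ be the completion of $\cK(W)$ for the sup-norm on~$W$, and set $\cB$ to be the separated completion of $\cB(W)/I$ where $I$ is generated by lifts $g_1,\dotsc,g_m$ of the defining equations. Points i)--ii) then follow exactly as you say, and the paper invokes \cite[th\'eor\`eme~1.2.11]{A1Z} for the homeomorphism $\cM(\cB(W))\simto W$.

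Two places where you overcomplicate matters. First, for iii) you propose to adjoin new variables $S_i$ with relations $S_i-\widetilde{f_i}$ for polynomial approximations $\widetilde{f_i}$ and then ``pass to the limit''. This is both unnecessary and slightly ill-posed (if $\widetilde{f_i}$ is only an approximation, the relation $S_i=\widetilde{f_i}$ does not recover $f_i$). The paper's manoeuvre is simpler: by the very definition of the structure sheaf, each $f_i$ is locally a uniform limit of elements of $\cK(W)$, so after shrinking $W$ one has $f'_i\in\cB(W)$ on the nose; take $F_i$ to be its image in~$\cB$, and iii) is immediate.

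Second, you flag iv) as ``the delicate one'' and sketch an exhaustion argument. In fact iv) is not where the difficulty lies: once $W$ is a \emph{rational} domain in $\E{N}{\cA}$, the statement that $\E{n}{\cB(W)}\to\E{n}{W}$ is a homeomorphism is exactly \cite[proposition~1.2.15]{A1Z}, and the paper simply cites it; the quotient by~$I$ and the projective case follow formally. So your concern that ``the sup-norm on $V$ need not control the seminorm on an arbitrary point lying over $V$ with large $\abs{U_j}$'' is handled by that reference, not by any special choice beyond rationality of~$W$. Your exhaustion-by-polydiscs idea would presumably also work, but it reproves a result already available.
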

\begin{proof}
Par d\'efinition, il existe un voisinage ouvert~$U_{x}$ de~$x$ isomorphe \`a un ferm\'e de Zariski~$Z$ d'un ouvert~$U$ d'un espace affine~$\E{N}{\cA}$. Quitte \`a restreindre, on peut supposer qu'il existe $f'_{1},\dotsc,f'_{d} \in \cO(U)$ dont les images dans~$\cO(U_{x})$ sont $f_{1},\dotsc,f_{d}$. On peut \'egalement supposer qu'il existe $g_{1},\dotsc,g_{m} \in \cO(U)$ tels que le faisceau d'id\'eaux de~$\cO_{U}$ d\'efinissant~$Z$ soit engendr\'e par $g_{1},\dotsc,g_{m}$. 

Notons~$x'$ l'image de~$x$ dans~$U$. Soit~$W$ un voisinage compact rationnel de~$x'$ dans~$\E{n}{\cA}$ contenu dans~$U$ (\cf~\cite[d\'efinition~1.2.8]{A1Z}). Notons~$\cB(W)$ le compl\'et\'e de~$\cK(W)$ pour la norme uniforme sur~$W$ (\cf~\cite[d\'efinition~1.2.1]{A1Z}). Quitte \`a restreindre~$W$, on peut supposer que les~$f'_{i}$ et les~$g_{j}$ appartiennent \`a~$\cB(W)$. D'apr\`es \cite[th\'eor\`eme~1.2.11]{A1Z}, le morphisme $\cA[T_{1},\dotsc,T_{N}] \to \cB(W)$ induit un hom\'eomorphisme $\varphi_{0} \colon \cM(\cB(W)) \simto W$. Notons~$I$ l'id\'eal de~$\cB(W)$ engendr\'e par $g_{1},\dotsc,g_{m}$. Le morphisme~$\varphi_{0}$ induit alors un hom\'eomorphisme entre les ferm\'es de Zariski de $\cM(\cB(W))$ et~$W$ d\'efinis par l'id\'eal~$I$. En outre, en notant~$\cB$ le s\'epar\'e compl\'et\'e de~$\cB(W)/I$ muni de sa semi-norme r\'esiduelle, le morphisme canonique $\cB(W) \to \cB$ induit un hom\'eomorphisme entre~$\cM(\cB)$ et le ferm\'e de Zariski de $\cM(\cB(W))$ d\'efini par l'id\'eal~$I$. Les propri\'et\'es i), ii) et~iii) de l'\'enonc\'e sont alors v\'erifi\'ees en d\'efinissant~$V$ comme l'image de~$W$ dans~$U_{x}$ et, pour tout $i\in\cn{1}{d}$, $F_{i}$ comme l'image de $f_{i}$ dans~$\cB$.

La propri\'et\'e~iv) pour~$\E{n}{}$ d\'ecoule de \cite[proposition~1.2.15]{A1Z}. L'hom\'eomorphisme est encore valable pour les ouverts de~$\E{n}{}$ compl\'ementaires des hyperplans de coordonn\'ees. Le cas de~$\EP{n}{}$ s'en d\'eduit par recollement.
\end{proof}

\subsection{Exemples}\label{sec:Berkovichexemples}

Nous regroupons ici quelques exemples d'anneaux de Banach. Nous renvoyons \`a \cite[Section~1.1]{CTC} pour un traitement plus d\'etaill\'e, ainsi que d'autres exemples.

\subsubsection{Corps archim\'ediens}

Le corps~$\C$ muni de la valeur absolue usuelle~$\va_{\infty}$ est un anneau de Banach. 

Soit $n\in \N$. \`A tout $z\in \C^n$, on associe la semi-norme multiplicative
\[\fonction{\va_{z}}{\C[T_{1},\dotsc,T_{n}]}{\R_{\ge 0}}{P}{\abs{P(z)}_{\infty}}.\]
L'application
\[ z \in \C^n \mapstoo \va_{z} \in \E{n}{\C}\]
ainsi d\'efinie est une bijection. C'est m\^eme un isomorphisme d'espaces localement annel\'es, lorsqu'on munit~$\C^n$ de la topologie transcendante et du faisceau des fonctions analytiques.

Soit $\eps\in \intof{0,1}$. Notons $\C_{\eps}$ le corps~$\C$ muni de la valeur absolue~$\va_{\infty}^\eps$. C'est encore un anneau de Banach et, pour tout $n\in \N$, les espaces localement annel\'es~$\E{n}{\C_{\eps}}$ et $\E{n}{\C}$ sont isomorphes (\cf~lemme~\ref{lem:Phiepsiso}).

\medbreak

Consid\'erons maintenant le corps~$\R$ muni de la valeur absolue usuelle~$\va_{\infty}$. C'est un anneau de Banach et la construction pr\'ec\'edente induit un hom\'eomorphisme
\[ \C^n/\Gal(\C/\R) \simto \E{n}{\R},\]
o\`u~$\C^n$ est muni de la topologie transcendante et $\C^n/\Gal(\C/\R)$ de la topologie quotient. 

Soit $\eps\in \intof{0,1}$. Notons $\R_{\eps}$ le corps~$\R$ muni de la valeur absolue~$\va_{\infty}^\eps$. C'est encore un anneau de Banach et, comme dans le cas complexe, les espaces localement annel\'es~$\E{n}{\R_{\eps}}$ et $\E{n}{\R}$ sont isomorphes (\cf~lemme~\ref{lem:Phiepsiso}).

Remarquons que l'injection isom\'etrique $\R_{\eps} \hookrightarrow \C_{\eps}$ induit un morphisme $\pr_{\C,\R} \colon \E{n}{\C_{\eps}} \to \E{n}{\R_{\eps}}$.

\begin{rema}\label{rem:calculepsilonarc}
Tout corps valu\'e archim\'edien complet $(k,\va)$ est isom\'etriquement isomorphe \`a~$\C_{\eps}$ ou~$\R_{\eps}$ pour un certain $\eps \in \intof{0,1}$. Le nombre r\'eel~$\eps$ peut \^etre obtenu par la formule
\[\eps = \frac{\log(\abs{2})}{\log(2)}.\]
\end{rema}

\subsubsection{Corps ultram\'etriques}\label{sec:corpsum}

Tout corps valu\'e ultram\'etrique complet fournit un exemple d'anneau de Banach.

Soit $(k,\va)$ un corps valu\'e ultram\'etrique complet. Soit $n\in\N$. \`A tout $z\in k^n$, on associe la semi-norme multiplicative
\[\fonction{\va_{z}}{k[T_{1},\dotsc,T_{n}]}{\R_{\ge 0}}{P}{\abs{P(z)}}.\]
L'application
\[ z \in k^n \mapstoo \va_{z} \in \E{n}{k}\]
ainsi d\'efinie est injective, mais elle est loin d'\^etre surjective, d\`es que $n\ge 1$.

Donnons un exemple. Pour $z\in k$ et $s\in \R_{> 0}$, l'application
\[\fonction{\va_{z,s}}{k[T]}{\R_{\ge 0}}{\disp\sum_{i\ge 0} a_{i} (T-z)^i}{\disp\max_{i\ge 0}(\abs{a_{i}}s^i)}\]
d\'efinit une valeur absolue sur~$k[T]$. Le point de~$\E{1}{k}$ correspondant, que nous noterons~$\eta_{z,s}$, n'appartient pas \`a l'image de~$k$ dans~$\E{1}{k}$.

\subsubsection{Autres anneaux de Banach}\label{sec:autresanneaux}

Nous pr\'esentons ici trois classes d'anneaux de Banach pour lesquels les r\'esultats du texte s'appliquent. 

\medbreak

\noindent $\bullet$ Corps hybrides

Soit $(k,\va)$ un corps muni d'une valeur absolue non triviale, archim\'edienne ou non. Notons $\hat k$ le compl\'et\'e de~$k$. Consid\'erons la norme sur~$k$ d\'efinie par 
\[ \nm_{\hyb} := \max(\va,\va_{0}),\]
o\`u $\va_{0}$ d\'esigne la valeur absolue triviale (d\'efinie par $\abs{0}_{0}=0$ et $\abs{\alpha}_{0}=1$ pour $\alpha\in k^\ast$). Notons~$k_{\hyb}$ le corps~$k$ muni de la norme~$\nm_{\hyb}$. C'est un anneau de Banach dont on peut d\'ecrire explicitement le spectre~:
\[ \cM(k_{\hyb}) = \{\va_{0}\} \cup \{\va^\eps : 0< \eps \le 1\}.\]  
Il est hom\'eomorphe au segment~$\intff{0,1}$.

Soit $X$ un espace analytique sur~$k_{\hyb}$ et consid\'erons le morphisme structural $\pr \colon X \to \cM(k_{\hyb})$. Alors, pour tout $\eps \in \intof{0,1}$, $\pr^{-1}(\va^\eps)$ est un espace analytique sur~$(\hat k,\va^\eps)$, donc, \`a peu de choses pr\`es, un espace analytique sur $(\hat k,\va)$, et $\pr^{-1}(\va_{0})$ est un espace analytique sur~$(k,\va_{0})$, de nature fondamentalement diff\'erente, auquel on peut penser comme \`a la limite des pr\'ecedents. Lorsque $(k,\va)$ est le corps~$\C$ muni de la valeur absolue usuelle, on obtient ainsi une famille d'espaces analytique complexes d\'eg\'en\'erant vers un espace ultram\'etrique. C'est l'exemple consid\'er\'e dans~\cite{FavreEndomorphisms}.

\medbreak

\noindent $\bullet$ Anneaux de valuation discr\`ete

Soit $R$ un anneau de valuation discr\`ete complet et~$v$ la valuation associ\'ee. Notons~$\m$ l'id\'eal maximal de~$R$. Soit $r \in \intoo{0,1}$ et posons $\va := r^{v(\wc)}$. Alors $(R,\va)$ est un anneau de Banach.

Notons~$\va^{+\infty}$ la semi-norme sur~$R$ d\'efinie comme la composition de l'application quotient $R \to R/\m$ et de la valeur absolue triviale sur~$R/\m$. On peut alors d\'ecrire explicitement le spectre de~$(R,\va)$~:
\[ \cM(R) = \{\va^\eps : \eps \ge 1\}\cup \{\va^{+\infty}\}.\]  
Il est hom\'eomorphe au segment~$\intff{1,+\infty}$.

Cette situation est assez semblable \`a celle des corps hybrides. Soit $X$ un espace analytique sur~$R$ et consid\'erons le morphisme structural $\pr \colon X \to \cM(R)$. Alors, pour tout $\eps \in \intfo{1,+\infty}$, $\pr^{-1}(\va^\eps)$ est, \`a normalisation pr\`es, un espace analytique sur~$(K,\va)$, o\`u $K$ est le corps des fractions de~$R$, et $\pr^{-1}(\va^{+\infty})$ est un espace analytique sur~$(R/\m,\va_{0})$, sorte de version d\'eg\'en\'er\'ee des pr\'ec\'edents.

\medbreak

\noindent $\bullet$ Anneaux d'entiers de corps de nombres

L'anneau des entiers relatifs~$\Z$ muni de la valeur absolue usuelle~$\va_{\infty}$ est un anneau de Banach. La description de son spectre analytique d\'ecoule du th\'eor\`eme d'Ostrowski. 
Il contient~: 
\begin{itemize}
\item la valeur absolue triviale~$\va_{0}$ sur~$\Z$ ;
\item les valeurs absolues archim\'ediennes~$\va_{\infty}^\eps$ avec $\eps \in \intof{0,1}$ ;
\item pour tout nombre premier~$p$, les valeurs absolues $p$-adiques~$\va_{\infty}^\eps$ avec $\eps \in \intoo{0,+\infty}$ ;
\item pour tout nombre premier~$p$, la semi-norme~$\va_{p}^{+\infty}$ d\'efinie comme la composition de l'application quotient $\Z \to \Z/p\Z$ et de la valeur absolue triviale sur~$\Z/p\Z$. 
\end{itemize}

%
%
%

Cette th\'eorie se g\'en\'eralise aux anneaux d'entiers de corps de nombres. Nous renvoyons \`a \cite[section~3.1]{A1Z} pour plus de d\'etails.

\subsubsection{Bons anneaux de Banach}\label{sec:bonsanneaux}

Nous introduisons ici une notion de bon anneau de Banach, pour lesquels les r\'esultats de ce texte sont valables. Elle est abstraite, utilise de fa\c con essentielle la terminologie de~\cite{CTC}, et probablement sp\'ecifique \`a ce papier. Nous sugg\'erons au lecteur de ne pas trop y accorder trop d'attention, mais plut\^ot de garder en t\^ete les exemples de la section pr\'ec\'edente.

\begin{defi}
Un anneau de Banach $(\cA,\nm)$ est dit \emph{bon} si 
\begin{enumerate}[i)]
\item $(\cA,\nm)$ est un anneau de base g\'eom\'etrique (\cf~\cite[d\'efinition~3.3.8]{CTC}) ;
\item $\cM(\cA,\nm)$ est peu mixte (\cf~\cite[d\'efinition~7.2.5]{CTC}) ;
\item $\cM(\cA,\nm)$ et sa partie ultram\'etrique sont localement connexes par arcs. 
\end{enumerate}
\end{defi}

Les exemples pr\'ec\'edents (corps valu\'es, corps hybrides, anneaux de valuation discr\`ete, anneaux d'entiers de corps de nombres) sont tous de bons anneaux de Banach (\cf~\cite[exemples~3.3.9 et~7.2.7]{CTC}).

Les espaces analytiques sur de tels anneaux de Banach ont re\c cu un traitement d\'etaill\'e dans la litt\'erature, \cf~\cite{A1Z,EtudeLocale,CTC}. On sait qu'ils jouissent de bonnes propri\'et\'es, dans la m\^eme veine que les espaces analytiques classiques~: hens\'elianit\'e, noeth\'erianit\'e et excellence des anneaux locaux (\cf~\cite[corollaire~2.2.7]{A1Z} et \cite[corollaire~9.19 et~10.3]{EtudeLocale}), coh\'erence du faisceau structural (\cf~\cite[corollaire~11.10]{EtudeLocale}), connexit\'e par arcs locale (\cf~\cite[th\'eor\`eme~7.2.17]{CTC}), etc. On peut \'egalement d\'efinir la cat\'egorie des espaces analytiques sur un tel anneau (\cf~\cite[section~2.1]{CTC}) et l'on dispose d'un foncteur d'analytification depuis la cat\'egorie des sch\'emas localement de pr\'esentation finie, d'un foncteur de changement de base et de produits fibr\'es (\cf~\cite[chapitre~4]{CTC}). Nous utiliserons librement ces propri\'et\'es dans la suite du texte.

\subsection{Flot}\label{sec:flot} 

Nous nous int\'eressons ici \`a la fa\c con dont sont modifi\'es les espaces de Berkovich lorsqu'on remplace les semi-normes multiplicatives qui le constituent par des puissances.

\subsubsection{Cas d'un corps valu\'e}\label{sec:flotcorpsvalue}

Soient $A$ un anneau et~$\va$ une valeur absolue sur~$A$. 

Rappelons que le fait que~$\va$ soit archim\'edienne ou ultram\'etrique ne d\'epend que de sa restriction \`a l'image de~$\Z$ dans~$A$. Plus pr\'ecis\'ement, $\va$ est ultram\'etrique si, et seulement si, on a $\abs{2}\le 1$.

\begin{nota}
Si~$\va$ est ultram\'etrique, on pose $I(\va) := \R_{>0}$.

Si~$\va$ est archim\'edienne, on pose $I(\va) := \mathopen{\big]} 0,\frac{\log(2)}{\log(\abs{2})} \mathclose{\big]}$ (\cf~remarque~\ref{rem:calculepsilonarc}).
\end{nota}

L'int\'er\^et de la notation appara\^it dans le r\'esultat classique suivant. 

\begin{lemm}\label{lem:vaeps}
Pour tout $\eps \in I(\va)$, $\va^\eps$ d\'efinit une valeur absolue sur~$A$.
\qed
\end{lemm}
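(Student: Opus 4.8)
The statement is entirely elementary, so the plan is to verify the three axioms of an absolute value for the function $\va^\eps \colon A \to \R_{\ge 0}$, $a \mapsto \abs{a}^\eps$, treating the two cases of the definition of $I(\va)$ separately where necessary. First I would dispose of the easy axioms: $\abs{0}^\eps = 0$, $\abs{1}^\eps = 1$, and multiplicativity $\abs{ab}^\eps = (\abs{a}\,\abs{b})^\eps = \abs{a}^\eps \abs{b}^\eps$ hold for every real exponent $\eps > 0$ with no restriction at all, since $t \mapsto t^\eps$ is multiplicative on $\R_{\ge 0}$. (One should also note $\abs{a}^\eps = 0 \iff \abs{a} = 0 \iff a = 0$, using that $\va$ is an absolute value.) So the only nontrivial point is the triangle inequality $\abs{a+b}^\eps \le \abs{a}^\eps + \abs{b}^\eps$, and this is exactly the place where the value of $\eps$ matters.

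In the ultrametric case $I(\va) = \R_{>0}$, raising the strong triangle inequality $\abs{a+b} \le \max(\abs{a},\abs{b})$ to the power $\eps > 0$ gives $\abs{a+b}^\eps \le \max(\abs{a}^\eps,\abs{b}^\eps) \le \abs{a}^\eps + \abs{b}^\eps$, since $t \mapsto t^\eps$ is nondecreasing on $\R_{\ge 0}$; so $\va^\eps$ is again (even ultrametric) here. In the archimedean case, the claim for $\eps \in \intof{0,1}$ follows from the classical subadditivity inequality $(x+y)^\eps \le x^\eps + y^\eps$ for $x,y \ge 0$ and $0 < \eps \le 1$ (proved by homogeneity, reducing to $x+y=1$, where $x^\eps \ge x$ and $y^\eps \ge y$). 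The subtlety is that $I(\va)$ is allowed to extend up to $\frac{\log 2}{\log \abs 2} \ge 1$, so for $\eps > 1$ one cannot argue this way. Here I would invoke the standard fact (this is essentially the content of Remark \ref{rem:calculepsilonarc} and Ostrowski's theorem) that an archimedean absolute value on $A$ restricts on the image of $\Z$ to a power $\va_\infty^{\delta}$ of the usual absolute value with $\delta = \frac{\log\abs 2}{\log 2}$; thus $\va^{1/\delta}$ restricts to $\va_\infty$ on $\Z$, the exponent $\eps\delta$ lies in $\intof{0,1}$ precisely when $\eps \in I(\va)$, and one reduces to the already-treated case $\eps \le 1$ applied to $\va^{1/\delta}$. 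Equivalently, and more cleanly: write $\eps = (\eps/\delta)\cdot\delta$; the upper bound $\frac{\log 2}{\log\abs 2}$ on $I(\va)$ is exactly the threshold making $(x+y)^\eps \le x^\eps + y^\eps$ survive for the values of $\abs\cdot$ that actually occur.

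The main (and really only) obstacle is the archimedean triangle inequality for exponents $\eps > 1$: one must use that the definition of $I(\va)$ has been calibrated to the specific constant governing how far an archimedean absolute value can be ``inflated'' while remaining subadditive, rather than the naive bound $\eps \le 1$. Since the excerpt labels this as ``le r\'esultat classique suivant'', I would expect the author's proof to be a one-line pointer to this standard fact about archimedean absolute values (e.g.\ to the discussion preceding Remark \ref{rem:calculepsilonarc}, or to a reference such as Bourbaki or Neukirch), and I would structure my own proof the same way: the ultrametric case in one line, the archimedean case reduced to $\eps \in \intof{0,1}$ via the normalization of Remark \ref{rem:calculepsilonarc}, then the elementary inequality $(x+y)^\eps \le x^\eps + y^\eps$.
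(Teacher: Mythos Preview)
The paper gives no proof at all: the lemma is followed immediately by \qed, signalling that the author treats it as a classical fact not requiring justification. Your plan is therefore already more detailed than what the paper offers, and the overall strategy (trivial axioms, then ultrametric case trivially, then archimedean case via the subadditivity inequality $(x+y)^\eps \le x^\eps + y^\eps$ for $\eps \le 1$) is correct and standard.

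One point of care in your archimedean reduction. You write the step as ``apply the already-treated case $\eps \le 1$ to $\va^{1/\delta}$'', but to do this you need $\va^{1/\delta}$ itself to be an absolute value on~$A$, and since $1/\delta \ge 1$ this is a special case of the very lemma you are proving. Knowing only that $\va^{1/\delta}$ restricts to $\va_\infty$ on the image of~$\Z$ is not enough. The clean way out is the one you allude to via Remark~\ref{rem:calculepsilonarc}: pass from~$A$ (an integral domain, since $\va$ is multiplicative) to its fraction field, complete, and invoke Ostrowski for complete archimedean fields to embed~$A$ isometrically into $(\C,\va_\infty^\delta)$. Then $\va^{1/\delta}$ is literally the restriction of the usual~$\va_\infty$, hence an absolute value, and your reduction goes through. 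Phrasing it this way removes the circularity.
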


Le r\'esultat vaut encore pour les semi-normes multiplicatives. (On se ram\`ene au cas d'une valeur absolue en quotientant par le noyau de la semi-norme.)

\medbreak

Soit $(k,\va)$ un corps valu\'e complet. 

\begin{nota}
Pour tout $\eps\in I(\va)$, on note~$k_{\eps}$ le corps~$k$ muni de la valeur absolue~$\va^\eps$. C'est encore un corps valu\'e complet.
\end{nota}

Soit $x\in \E{n}{k}$. Par d\'efinition, il est associ\'e \`a une semi-norme multiplicative $\va_{x} \colon k[T_{1},\dotsc,T_{n}] \to \R_{\ge0}$ qui induit~$\va$ sur~$k$. Pour tout $\eps \in I(\va)$, il d\'ecoule du lemme~\ref{lem:vaeps} que l'application $\va_{x}^\eps \colon k[T_{1},\dotsc,T_{n}] \to \R_{\ge0}$ d\'efinit une semi-norme multiplicative qui induit~$\va^\eps$ sur~$k$. 

\begin{nota}
Pour tout $x\in \E{n}{\cA}$ et tout $\eps \in I(k)$, on note~$x^\eps$ le point de~$\E{n}{k_\eps}$ associ\'e \`a~$\va_{x}^\eps$.
\end{nota}

Remarquons que les corps abstraits $\cH(x)$ et $\cH(x^\eps)$ sont isomorphes, puisqu'ils sont obtenus en compl\'etant le m\^eme corps pour deux valeurs absolues \'equivalentes.

\begin{rema}
L'application $x\mapsto x^\eps$ agit trivialement sur les points $k$-rationnels~: pour tout $a \in k^n$, le point de~$\E{n}{k}$ associ\'e \`a~$a$ est envoy\'e  sur le point de~$\E{n}{k_{\eps}}$ associ\'e \`a~$a$. On le v\'erifie imm\'ediatement sur les d\'efinitions.
\end{rema}

\begin{lemm}\label{lem:Phiepsiso}
L'application 
\[\renewcommand{\arraystretch}{1.2} \fonction{\Phi_{\eps}}{\E{n}{k}}{\E{n}{k_{\eps}}}{x}{x^\eps}\]
r\'ealise un isomorphisme d'espaces localement annel\'es.

Pour tout ouvert $U$ de~$\E{n}{k}$, toute $f\in \cO(U)$ et tout $x\in U$, on a
\[ \abs{(\Phi_{\eps})_{\ast }f(x^\eps)} = \abs{f(x)}^\eps.\]
\end{lemm}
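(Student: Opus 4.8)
The statement claims that $\Phi_\eps\colon \E{n}{k}\to\E{n}{k_\eps}$, $x\mapsto x^\eps$, is an isomorphism of locally ringed spaces, together with the formula $\abs{(\Phi_\eps)_*f(x^\eps)}=\abs{f(x)}^\eps$ for a section $f$. First I would check that $\Phi_\eps$ is a bijection on underlying sets: its inverse is visibly $\Phi_{1/\eps}$ (note $1/\eps\in I(\va^\eps)$ exactly when $\eps\in I(\va)$, by the very definition of $I(\wc)$, so this makes sense), since raising a multiplicative seminorm to the power $\eps$ and then to the power $1/\eps$ returns the original seminorm. Next I would verify continuity in both directions: by the definition of the topology on $\E{n}{k_\eps}$ as the coarsest making $x\mapsto\abs{P(x)}$ continuous for $P\in k[T_1,\dots,T_n]$, and since $\abs{P(x^\eps)}=\abs{P(x)}^\eps$ and $t\mapsto t^\eps$ is a homeomorphism of $\R_{\ge0}$, continuity of $\Phi_\eps$ and of $\Phi_{1/\eps}$ is immediate. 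So $\Phi_\eps$ is a homeomorphism.

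**The structure sheaf.** The real content is the isomorphism of structure sheaves. Fix $x\in\E{n}{k}$; as already observed in the text, $\cH(x)$ and $\cH(x^\eps)$ are the completions of the same field $\Frac(k[T_1,\dots,T_n]/\ker\va_x)$ for the two equivalent absolute values $\abs{\wc}$ and $\abs{\wc}^\eps$, hence are canonically isomorphic as abstract fields (an equivalence of absolute values does not change Cauchy sequences, so the completions coincide set-theoretically). Call this canonical isomorphism $\iota_x\colon\cH(x)\simto\cH(x^\eps)$; under it one has $\abs{\iota_x(a)}=\abs{a}^\eps$ for all $a\in\cH(x)$. The map $\Phi_\eps^\sharp$ on sections is then defined fiberwise: for $U$ open in $\E{n}{k}$ and $f\in\cO(U)$, set $((\Phi_\eps)_*f)(x^\eps):=\iota_x(f(x))$, which immediately gives the claimed formula $\abs{(\Phi_\eps)_*f(x^\eps)}=\abs{f(x)}^\eps$. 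It remains to see this respects the local structure of $\cO$: since both $\iota_x$ and the identity on $k[T_1,\dots,T_n]$ are compatible with evaluation, a rational function in $S_V^{-1}k[T_1,\dots,T_n]$ is sent to the same rational function (on the homeomorphic image $\Phi_\eps(V)$), and, crucially, uniform limits are preserved because $\norm{g-h}_{V}$ and $\norm{g-h}_{\Phi_\eps(V)}$ are related by the bijection $t\mapsto t^\eps$: uniform convergence of a sequence in $\cK(V)$ to $f$ on $V$ is equivalent to uniform convergence of the image sequence to $(\Phi_\eps)_*f$ on $\Phi_\eps(V)$. Hence $(\Phi_\eps)_*$ maps $\cO_{\E{n}{k}}$ isomorphically onto $\cO_{\E{n}{k_\eps}}$, and the inverse is $(\Phi_{1/\eps})_*$.

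**Expected main obstacle.** Very little here is genuinely hard; the one point deserving care is the bookkeeping around the condition $\eps\in I(\va)$ in the archimedean case — one must be sure that $\va^\eps$ and $(\va^\eps)^{1/\eps}$ both fall in the allowed range so that all four seminorm operations involved are legitimate (Lemma~\ref{lem:vaeps}), and that $I(\va^\eps)$ is indeed $\{1/\eps\cdot\delta : \delta\in I(\va)\}$, which follows from $\frac{\log 2}{\log\abs{2}^\eps}=\frac1\eps\cdot\frac{\log 2}{\log\abs{2}}$. The other mild subtlety is making the canonical field isomorphism $\iota_x$ and its naturality in $x$ explicit enough to conclude it assembles into a sheaf isomorphism; but since it is induced at the level of the non-completed fraction field by the identity, naturality is automatic. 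Once these are in place, the formula $\abs{(\Phi_\eps)_*f(x^\eps)}=\abs{f(x)}^\eps$ for $f\in\cO(U)$ is just the defining property of $\iota_x$ extended to sections by working locally with rational functions and passing to uniform limits.
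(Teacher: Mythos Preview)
Your proof is correct and follows essentially the same approach as the paper: inverse via $\Phi_{1/\eps}$, continuity in both directions from the definition of the topology, and the structure-sheaf isomorphism deduced from the canonical identification $\cH(x)\simeq\cH(x^\eps)$. You simply spell out more of the details (the preservation of uniform limits of rational functions and the verification that $1/\eps\in I(\va^\eps)$), which the paper leaves implicit.
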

\begin{proof}
L'application $\Phi_{\eps^{-1}} \colon \E{n}{k_{\eps}} \to \E{n}{k}$ satisfait $\Phi_{\eps^{-1}} \circ \Phi_{\eps} = \Phi_{\eps}\circ\Phi_{\eps^{-1}}=\id$, donc $\Phi_{\eps}$ est une bijection.

Il d\'ecoule directement de la d\'efinition de la topologie que les applications~$\Phi_{\eps}$ et~$\Phi_{\eps^{-1}}$ sont continues, donc $\Phi_{\eps}$ est un hom\'eomorphisme.

Rappelons que le faisceau structural est construit \`a partir des corps~$\cH(x)$. L'isomorphisme d'espaces localement annel\'es suit donc des identifications $\cH(x^\eps) = \cH(x)$. 

La propri\'et\'e finale d\'ecoule des d\'efinitions.
\end{proof}

Rappelons que les espaces $k$-analytiques que nous consid\'erons ici sont construits en recollant des mod\`eles locaux, qui sont des ferm\'es analytiques d'ouverts d'espaces affines. Il d\'ecoule du lemme~\ref{lem:Phiepsiso} que la restriction de~$\Phi_{\eps}$ \`a un tel espace ne d\'epend pas de sa pr\'esentation comme mod\`ele local. Par cons\'equent, on peut recoller les images des mod\`eles locaux et les applications~$\Phi_\eps$.

\begin{nota}
Soit $X$ un espace $k$-analytique. On note~$X_{\eps}$ l'espace $k_{\eps}$-analytique et 
\[\fonction{\Phi_{\eps}}{X}{X_{\eps}}{x}{x^\eps}\]
l'isomorphisme d'espaces localement annel\'es obtenus par la construction ci-dessus.
\end{nota}

\subsubsection{Cas d'un anneau de Banach}\label{sec:flotanneau}

Soit $(\cA,\nm)$ un anneau de Banach. Sous certaines conditions, l'application $x\mapsto x^\eps$ peut \^etre d\'efinie comme application d'un espace $\cA$-analytique dans lui-m\^eme.

\begin{defi}\label{def:Aflottant}
On dit que $(\cA,\nm)$ est \emph{flottant} si, pour tout $a\in \cA$, on a $\norm{a} \ge 1$.
\end{defi}

Pour toute semi-norme multiplicative~$\va$ sur $\cA[T_{1},\dotsc,T_{n}]$ et tout $\eps \in \intof{0,1}$, $\va^\eps$ est encore une semi-norme multiplicative sur $\cA[T_{1},\dotsc,T_{n}]$. Lorsque $(\cA,\nm)$ est flottant, le caract\`ere born\'e par rapport \`a~$\nm$ est pr\'eserv\'e.

\begin{nota}\label{nota:xeps}
Supposons que $(\cA,\nm)$ est flottant. Pour tout $x\in \E{n}{\cA}$ et tout $\eps \in \intof{0,1}$, on note~$x^\eps$ le point de~$\E{n}{\cA}$ associ\'e \`a la semi-norme multiplicative~$\va_{x}^\eps$.

On pose 
\[ \fonction{\Phi_{\eps}}{\E{n}{\cA}}{\E{n}{\cA}}{x}{x^\eps}.\]

On \'etend les d\'efinitions de~$x^\eps$ et~$\Phi_{\eps}$ \`a~$\EP{n}{\cA}$ en utilisant des cartes.
\end{nota}

\begin{rema}
Supposons que $(\cA,\nm)$ est flottant. Notons $\pr \colon \E{n}{\cA} \to \cM(\cA)$ la projection. Soit $b\in \cM(\cA)$. Alors, l'application $\Phi_{\eps} \colon \E{n}{\cA} \to \E{n}{\cA}$ de la notation~\ref{nota:xeps} induit une application 
\[\pr^{-1}(b) \simeq \E{n}{\cH(b)} \too \pr^{-1}(b^\eps) \simeq \E{n}{\cH(b^\eps)}\simeq \E{n}{\cH(b)_{\eps}}\] 
qui n'est autre que celle de la section~\ref{sec:flotcorpsvalue}. 
\end{rema}

\begin{lemm}
Supposons que $(\cA,\nm)$ est flottant. Alors, pour tout $\eps \in \intof{0,1}$, l'application $\Phi_{\eps} \colon \EP{n}{\cA} \to \EP{n}{\cA}$ r\'ealise un hom\'eomorphisme sur son image.
\end{lemm}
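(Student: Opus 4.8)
Le plan est de ramener l'\'enonc\'e au cas affine --- o\`u tout est essentiellement contenu dans le lemme~\ref{lem:Phiepsiso} --- puis de recoller le long des cartes standard de~$\EP{n}{\cA}$.

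On commencerait donc par traiter~$\E{n}{\cA}$. L'injectivit\'e de~$\Phi_{\eps}$ y est imm\'ediate~: si $\va_{x}^\eps = \va_{y}^\eps$ sur~$\cA[T_{1},\dotsc,T_{n}]$, alors, en \'elevant \`a la puissance~$\eps^{-1}$, on obtient $\va_{x} = \va_{y}$, soit $x=y$. Pour la continuit\'e, on utiliserait que la topologie de~$\E{n}{\cA}$ est la topologie la plus grossi\`ere rendant continues les applications $x\mapsto \abs{P(x)}$, $P\in \cA[T_{1},\dotsc,T_{n}]$, jointe \`a l'\'egalit\'e $\abs{P(x^\eps)} = \abs{P(x)}^\eps$ (vraie par d\'efinition de~$x^\eps$) et \`a la continuit\'e de $t\in\R_{\ge 0}\mapsto t^\eps$. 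Le m\^eme argument, appliqu\'e \`a $t\mapsto t^{1/\eps}$, \'etablit la continuit\'e de la r\'eciproque $x^\eps\mapsto x$, d\'efinie sur l'image de~$\Phi_{\eps}$ munie de la topologie induite~; d'o\`u un hom\'eomorphisme de~$\E{n}{\cA}$ sur son image. C'est l\`a, et l\`a seulement, que sert l'hypoth\`ese que~$\cA$ est flottant~: elle assure que les semi-normes~$\va_{x}^\eps$ restent born\'ees par~$\nm$, donc que~$\Phi_{\eps}$ est bien \`a valeurs dans~$\E{n}{\cA}$.

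On passerait ensuite \`a~$\EP{n}{\cA}$ au moyen de ses $n+1$ cartes standard $U_{0},\dotsc,U_{n}$, chacune isomorphe \`a~$\E{n}{\cA}$. Le point \`a retenir est que~$\Phi_{\eps}$ pr\'eserve ces cartes~: comme $t^\eps = 0$ \'equivaut \`a $t=0$, \'elever une semi-norme \`a la puissance~$\eps$ ne modifie pas l'ensemble des coordonn\'ees homog\`enes qui s'annulent en un point, de sorte que l'appartenance \`a~$U_{i}$ n'est pas modifi\'ee par~$\Phi_{\eps}$ et que $\Phi_{\eps}^{-1}(U_{i}) = U_{i}$. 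Sur chaque~$U_{i}$, identifi\'e \`a~$\E{n}{\cA}$ via les rapports de coordonn\'ees homog\`enes, la restriction de~$\Phi_{\eps}$ n'est autre que l'application affine \'etudi\'ee plus haut, donc un hom\'eomorphisme sur son image. Le recollement est alors formel~: l'injectivit\'e et la continuit\'e de~$\Phi_{\eps}$ sur~$\EP{n}{\cA}$ r\'esultent de celles sur les~$U_{i}$, qui forment un recouvrement ouvert stable par~$\Phi_{\eps}$~; et la continuit\'e de la r\'eciproque r\'esulte de ce que les $U_{i}\cap\Phi_{\eps}(\EP{n}{\cA})$ forment un recouvrement ouvert de l'image sur lequel elle co\"incide avec la r\'eciproque, continue, de~$\Phi_{\eps}\vert_{U_{i}}$.

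Il ne faut pas s'attendre \`a de r\'eelle difficult\'e~: une fois le cas affine acquis, l'\'enonc\'e est de nature formelle. Les deux seuls points r\'eclamant un minimum de soin sont la v\'erification que~$\Phi_{\eps}$ est compatible aux changements de cartes de~$\EP{n}{\cA}$ --- ce qui d\'ecoule de la multiplicativit\'e de l'\'el\'evation \`a la puissance~$\eps$, laquelle commute aux relations alg\'ebriques d\'efinissant les transitions --- et la gestion correcte de la topologie induite sur l'image lorsqu'on \'etablit la continuit\'e de la r\'eciproque.
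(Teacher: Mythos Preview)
Your argument is correct and follows the same overall strategy as the paper: reduce to the affine case, verify injectivity and bicontinuity there, then glue along the standard charts of~$\EP{n}{\cA}$. The one notable difference lies in how the continuity of the inverse is established in the affine case. You argue directly from the definition of the topology: since $|P(\Phi_\eps^{-1}(y))| = |P(y)|^{1/\eps}$ and $t \mapsto t^{1/\eps}$ is continuous on~$\R_{\ge 0}$, the inverse is continuous on the image equipped with its subspace topology. The paper instead introduces the auxiliary Banach ring $\cA_\eps := (\cA,\nm^\eps)$, identifies the image of~$\Phi_\eps$ with $\E{n}{\cA_\eps}$ sitting inside~$\E{n}{\cA}$, and then observes that $\Phi_{\eps^{-1}} \colon \E{n}{\cA_\eps} \to \E{n}{\cA}$ is continuous by the same argument that gave continuity of~$\Phi_\eps$. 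Both routes unwind to the same computation; the paper's packaging has the mild advantage of naming the image as a Berkovich space in its own right, while yours is more self-contained and avoids checking that~$\nm^\eps$ is again a Banach norm.
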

\begin{proof}
Soit $\eps \in \intof{0,1}$. On se ram\`ene imm\'ediatement au cas de $\Phi_{\eps} \colon \E{n}{\cA} \to \E{n}{\cA}$. Il d\'ecoule des d\'efinitions que cette application est continue (\cf~\cite[proposition~1.3.4]{A1Z}).

Notons~$\cA_{\eps}$ l'anneau~$\cA$ muni de la norme~$\nm^\eps$. C'est encore un anneau de Banach. On a une inclusion naturelle $\E{n}{\cA_{\eps}} \subset \E{n}{\cA}$ par laquelle $\E{n}{\cA_{\eps}}$ s'identifie \`a l'image de~$\Phi_{\eps}$. 

Sur l'espace $\E{n}{\cA_{\eps}}$, on peut d\'efinir une application~$\Phi_{\eps^{-1}}$, qui est continue comme pr\'ec\'edemment. Ceci conclut la preuve.
\end{proof}

\begin{defi}\label{def:flottant}
Supposons que $(\cA,\nm)$ est flottant. Un espace $\cA$-analytique~$X$ est dit \emph{flottant} s'il existe une immersion de~$X$ dans~$\EP{n}{\cA}$ telle que, pour tout $\eps \in \intof{0,1}$, on ait $\Phi_{\eps}(X) \subset X$.
\end{defi}

Si $X$ est flottant, pour tout ouvert $U$ de~$X$, toute $f\in \cO(U)$, tout $x\in U$ et tout $\eps\in \intof{0,1}$, on a $\abs{f(x^\eps)} = \abs{f(x)}^\eps$. On en d\'eduit que la propri\'et\'e d'\^etre fottant et l'application $\Phi_{\eps} \colon x\in X \mapsto x^\eps \in X$ ne d\'ependent pas du choix de l'immersion.

\begin{defi}
Supposons que $(\cA,\nm)$ est flottant. Soit $X$ un espace $\cA$-analytique flottant.

Une fonction $f \colon X \to \R_{\ge0} \cup \{+\infty\}$ (resp. $f \colon X \to \R\cup\{\pm\infty\}$) est dite \emph{flottante} (resp. \emph{$\log$-flottante}) si, pour tout $x\in X$ et tout $\eps\in \intof{0,1}$, on a $f(x^\eps) = f(x)^\eps$ (resp. $f(x^\eps) = \eps f(x)$).\footnote{On utilise ici les conventions $(+\infty)^\eps = \eps (+\infty) = +\infty $ et $\eps (-\infty) = -\infty$, pour $\eps>0$.}

Soit $Y$ un espace $\cA$-analytique flottant. Un morphisme $\varphi\colon X\to Y$ est dit \emph{flottant} si, pour tout $\eps\in \intof{0,1}$, on a $\Phi_{\eps} \circ \varphi =\varphi \circ \Phi_{\eps}$.
\end{defi}

\section{Mesures de Radon}\label{sec:Radon}

Cette section est consacr\'ee aux mesures de Radon. Nous y consid\'erons les op\'erations d'image directe (section~\ref{sec:Radonimage}) et image r\'eciproque (section~\ref{sec:Radonimagereciproque}). Dans la section~\ref{sec:Radonfamille}, nous introduisons la notion de famille continue de mesures, objet d'\'etude de ce texte.

\subsection{Images de mesures de Radon}\label{sec:Radonimage}

Commen\c cons par rappeler quelques notions topologiques.

\begin{defi}\label{def:applicationpropre}
Soit $\varphi \colon X \to X'$ une application continue entre espaces topologiques. 

On dit que $\varphi$ est \emph{propre} si, pour tout espace topologique~$X''$, l'application $\varphi\times \id_{X''} \colon X\times X'' \to X'\times X''$ est ferm\'ee.

On dit que $\varphi$ est \emph{finie} si elle est s\'epar\'ee, ferm\'ee et \`a fibres finies.
\end{defi}

Le r\'esultat suivant est une cons\'equence directe des d\'efinitions.

\begin{lemm}\label{lem:finivoisinagefibre}
Soit $\varphi \colon X \to X'$ une application finie entre espaces topologiques. Soit~$x'\in X'$. Pour tout voisinage~$U$ de~$\varphi^{-1}(x')$ dans~$X$, il existe un voisinage~$U'$ de~$x'$ dans~$X'$ tel que $\varphi^{-1}(U') \subset U$.
\qed
\end{lemm}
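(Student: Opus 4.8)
The plan is to use only the closedness of~$\varphi$; the finiteness of the fibers plays no role in this particular statement. Since a neighborhood need not be open, I would first replace~$U$ by an open neighborhood of~$\varphi^{-1}(x')$ contained in it, which is harmless. Set then $F := X \setminus U$, a closed subset of~$X$. Because $\varphi$ is finite, hence closed, the image $\varphi(F)$ is a closed subset of~$X'$.

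Next I would check that $x' \notin \varphi(F)$. Indeed, if we had $x' \in \varphi(F)$, there would exist $x \in F$ with $\varphi(x) = x'$, so that $x \in \varphi^{-1}(x') \subseteq U$, contradicting $x \in F = X \setminus U$. Consequently $U' := X' \setminus \varphi(F)$ is an open neighborhood of~$x'$ in~$X'$.

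Finally I would verify the inclusion $\varphi^{-1}(U') \subseteq U$. We have $\varphi^{-1}(U') = \varphi^{-1}\big(X' \setminus \varphi(F)\big) = X \setminus \varphi^{-1}\big(\varphi(F)\big)$, and since $F \subseteq \varphi^{-1}\big(\varphi(F)\big)$ this gives $\varphi^{-1}(U') \subseteq X \setminus F = U$, which is the desired conclusion.

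There is no real obstacle here: the argument is the standard one for closed maps. The only points deserving a word are that a neighborhood in the sense of this text need not be open (so one passes to an open one at the start) and that the inclusion $F \subseteq \varphi^{-1}(\varphi(F))$ — rather than an equality, which would be false in general — is all that the last step requires.
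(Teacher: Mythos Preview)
Your argument is correct and is precisely the standard one: the paper itself gives no detailed proof, stating only that the result is a direct consequence of the definitions and marking it with a \qed. Your write-up simply spells out this ``cons\'equence directe'' using the closedness built into the definition of a finite map, so there is nothing to add.
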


Ajoutons d'autres r\'esultats classiques.

\begin{prop}[\protect{\cite[I, \S 10, \no 2, prop.~6]{BourbakiTG14}}]
Soit $\varphi \colon X \to X'$ une application propre entre espaces topologiques. Alors, pour toute partie quasi-compacte~$K'$ de~$X'$, $\varphi^{-1}(K')$ est quasi-compacte.
\qed 
\end{prop}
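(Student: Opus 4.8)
The plan is to deduce this purely topological statement from the definition of properness in Definition~\ref{def:applicationpropre} (namely that $\varphi\times\id_{X''}$ is closed for every topological space~$X''$), so the proof merely recapitulates the classical argument of \cite[I, \S 10]{BourbakiTG14}. I would first record the two standard consequences of that definition that do the work: (a)~a proper map is closed — apply the defining property to a one-point~$X''$; and (b)~the fibres $\varphi^{-1}(x')$ of a proper map are quasi-compact — e.g.\ run an ultrafilter on such a fibre through the filter-theoretic reformulation of properness. Both are part of \cite[I, \S 10]{BourbakiTG14} and I would cite them rather than reprove them.

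Now fix a quasi-compact $K'\subseteq X'$ and an arbitrary open cover $(V_i)_{i\in I}$ of $\varphi^{-1}(K')$ for the induced topology; write $V_i=W_i\cap\varphi^{-1}(K')$ with $W_i$ open in~$X$. For each $x'\in K'$, fact~(b) makes $\varphi^{-1}(x')$ quasi-compact, hence contained in a finite union $W(x')$ of the~$W_i$; then $W(x')$ is open in~$X$ and contains $\varphi^{-1}(x')$. By the argument of Lemma~\ref{lem:finivoisinagefibre}, which uses only that $\varphi$ is closed — indeed $\varphi(X\setminus W(x'))$ is closed in~$X'$ and avoids~$x'$ — there is an open neighbourhood $O(x')$ of~$x'$ in~$X'$ with $\varphi^{-1}(O(x'))\subseteq W(x')$. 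Since $K'$ is quasi-compact, finitely many $O(x'_1),\dots,O(x'_m)$ cover it, whence
\[ \varphi^{-1}(K')\subseteq\bigcup_{k=1}^{m}\varphi^{-1}(O(x'_k))\subseteq\bigcup_{k=1}^{m}W(x'_k), \]
a finite union of the~$W_i$; the finitely many corresponding~$V_i$ therefore already cover $\varphi^{-1}(K')$, and it is quasi-compact.

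A slicker variant, closer to the Bourbaki treatment, is to first reduce to the case $X'=K'$ quasi-compact using stability of properness under base change (immediate from the definition, applied to $\varphi^{-1}(K')\to K'$), and then to argue with ultrafilters: any ultrafilter on $\varphi^{-1}(K')$ pushes forward to an ultrafilter on the quasi-compact space~$K'$, hence converges, and properness in filter form yields a cluster point of the original ultrafilter in the fibre over the limit — a cluster point to which that ultrafilter then converges. I do not expect a genuine obstacle here: the only points needing care are the two structural facts~(a) and~(b) about proper maps and the Wallace/tube-lemma step $\varphi^{-1}(O(x'))\subseteq W(x')$; once these are in place the argument is a routine finiteness extraction.
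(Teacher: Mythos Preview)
The paper does not prove this proposition at all: it is stated with a citation to Bourbaki and an immediate \texttt{\textbackslash qed}, so the authors simply import the result. Your reconstruction of the classical argument is correct (closedness from the one-point $X''$, quasi-compact fibres, then the tube-lemma/finite-extraction step, or alternatively the ultrafilter variant), and it is precisely the Bourbaki proof being cited; there is nothing further to compare.
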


\begin{theo}[\protect{\cite[I, \S 10, \no 2, th.~1]{BourbakiTG14}}]\label{thm:finipropre}
Soit $\varphi \colon X \to X'$ une application finie entre espaces topologiques. Alors $\varphi$ est propre.
\qed
\end{theo}

Nous pouvons maintenant d\'ecrire la proc\'edure pour pousser en avant des mesures de Radon.

\begin{defi}\label{def:imagemesure}
Soit $\varphi \colon X \to X'$ une application propre entre espaces topologiques localement compacts. Supposons que~$X$ est topologiquement s\'epar\'e. Soit~$\mu$ une mesure de Radon sur~$X$. L'application
\[\fonction{\varphi_{*}\mu}{\cC(X',\R)}{\R}{f}{\mu(f\circ \varphi)}\]
est une mesure de Radon sur~$X'$ appel\'ee \emph{image} de~$\mu$ par~$\varphi$.
\end{defi}

\begin{defi}
Soit $\cA$ un anneau de Banach flottant. Soit $X$ un espace $\cA$-analytique flottant et topologiquement s\'epar\'e. Une mesure de Radon~$\mu$ sur~$X$ est dite \emph{flottante} si, pour tout $\eps\in \intof{0,1}$, on a $(\Phi_{\eps})_{\ast} \mu=\mu$.
\end{defi}

\subsection{Images r\'eciproques de mesures de Radon}\label{sec:Radonimagereciproque}

Pour tirer en arri\`ere des mesures, nous avons besoin d'hypoth\`eses suppl\'ementaires sur l'application. Pour les \'enoncer, nous nous placerons dans le cadre des espaces analytiques. Soit~$\cA$ un bon anneau de Banach. 

\begin{defi}\label{def:morphismepropre}
Un morphisme entre espaces $\cA$-analytiques est dit \emph{propre} (resp. \emph{fini}) si l'application induite entre les espaces topologiques sous-jacents est propre (resp. finie) au sens de la d\'efinition~\ref{def:applicationpropre}.
\end{defi}

Rappelons tout d'abord le r\'esultat fondamental suivant. 

\begin{theo}[\protect{\cite[th\'eor\`eme~5.2.1]{CTC}}]\label{th:finicoherent}
Soit $\varphi \colon X \to X'$ un morphisme fini d'espaces $\cA$-analytiques. Alors, pour tout faisceau coh\'erent~$\cF$ sur~$X$, $\varphi_{*}\cF$ est un faisceau coh\'erent sur~$X'$.

En particulier, pour tout $x\in X$, $\cO_{x}$ est un module de type fini sur~$\cO_{\varphi(x)}$.
\qed
\end{theo}

Sous une hypoth\`ese suppl\'ementaire de platitude, nous pouvons maintenant d\'efinir des degr\'es locaux, en utilisant le fait qu'un module plat et de type fini sur un anneau local est libre.

\begin{defi}\label{def:degre}
Soit $\varphi \colon X \to X'$ un morphisme fini et plat d'espaces $\cA$-analytiques. Alors, pour tout $x\in X'$, $\cO_{x}$ est un module libre de type fini sur~$\cO_{\varphi(x)}$. Son rang est appel\'e \emph{degr\'e} de~$\varphi$ en~$x$ et not\'e~$\deg_x(\varphi)$.
\end{defi}

Pla\c cons-nous dans le cadre de la d\'efinition~\ref{def:degre}. Pour tout~$x'\in X'$, on a
\[\sum_{x\in \varphi^{-1}(x')} \deg_{x}(\varphi) = \sum_{x\in \varphi^{-1}(x')} \rang_{\cO_{x'}}(\cO_{x}) = \rang_{\cO_{x'}}((\varphi_{*}\cO)_{x'}).\]
Les hypoth\`eses assurent que $\varphi_{*}\cO$ est un $\cO$-module localement libre. En particulier, si $X'$~est connexe, il est de rang constant, et la quantit\'e pr\'ec\'edente ne d\'epend pas de~$x'$.

\begin{defi}
Soit $\varphi \colon X \to X'$ un morphisme fini et plat d'espaces $\cA$-analytiques. Supposons que $X'$~est connexe. On appelle \emph{degr\'e} de~$\varphi$ la quantit\'e
\[\deg(\varphi) := \sum_{x\in \varphi^{-1}(x')} \deg_{x}(\varphi),\]
pour $x'\in X'$.
\end{defi}

Afin de tirer en arri\`ere les mesures, nous aurons besoin de pousser en avant les fonctions.

\begin{defi}
Soit $\varphi \colon X \to X'$ un morphisme fini et plat d'espaces $\cA$-analytiques. Pour toute $f\in \cC(X,\R)$ et tout $x'\in X'$, on pose
\[ (\varphi_{*} f)(x') := \sum_{x\in \varphi^{-1}(x')} \deg_{x}(\varphi)\, f(x).\]
\end{defi}

\begin{prop}\label{prop:varphi_{*}f}
Soit $\varphi \colon X \to X'$ un morphisme fini et plat d'espaces $\cA$-analytiques. Alors, pour toute $f\in \cC(X,\R)$ (resp. $f\in \cC_{c}(X,\R)$), on a $\varphi_{*} f \in \cC(X',\R)$ (resp. $\varphi_{*} f \in \cC_{c}(X',\R)$).

En outre, si~$X'$ est connexe, pour toute partie~$E'$ de~$X'$, on a
\[ \norm{\varphi_{*}f}_{E'} \le \deg(\varphi)\,  \norm{f}_{\varphi^{-1}(E')}.\] 
\end{prop}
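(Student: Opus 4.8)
The sup-norm inequality and the compact-support claim are immediate; the continuity of $\varphi_{*}f$ is the heart of the matter. For the inequality: when $X'$ is connected the sheaf $\varphi_{*}\cO$ is locally free of constant rank $\deg(\varphi)$, so $\sum_{x\in\varphi^{-1}(x')}\deg_{x}(\varphi)=\deg(\varphi)$ for every $x'$; then for $x'\in E'$ the triangle inequality gives $\abs{(\varphi_{*}f)(x')}\le\sum_{x\in\varphi^{-1}(x')}\deg_{x}(\varphi)\,\abs{f(x)}\le\deg(\varphi)\,\norm{f}_{\varphi^{-1}(E')}$, whence $\norm{\varphi_{*}f}_{E'}\le\deg(\varphi)\,\norm{f}_{\varphi^{-1}(E')}$. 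For the support: if $\Supp f$ is contained in a compact set $K$, then $(\varphi_{*}f)(x')=0$ as soon as $\varphi^{-1}(x')\cap K=\emptyset$, i.e.\ as soon as $x'\notin\varphi(K)$; since $\varphi$ is proper (Theorem~\ref{thm:finipropre}), $\varphi(K)$ is compact, so $\Supp(\varphi_{*}f)\subset\varphi(K)$ is compact and $\varphi_{*}f\in\cC_{c}(X',\R)$.

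Continuity is a local question on $X'$. I fix $x'_{0}\in X'$, write $\varphi^{-1}(x'_{0})=\{x_{1},\dots,x_{m}\}$ (finite, as $\varphi$ is finite), fix $\eps>0$, and choose pairwise disjoint open neighbourhoods $U_{i}\ni x_{i}$ on which moreover $\abs{f-f(x_{i})}<\eps$ --- such $U_{i}$ exist because $f$ is continuous and because $\varphi$, being finite, is separated, so distinct points of one fibre have disjoint open neighbourhoods in $X$. By Lemma~\ref{lem:finivoisinagefibre} applied to $U=\bigsqcup_{i}U_{i}$, together with local connectedness of $X'$, there is a connected open neighbourhood $V$ of $x'_{0}$ with $\varphi^{-1}(V)\subset\bigsqcup_{i}U_{i}$. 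Set $W_{i}:=U_{i}\cap\varphi^{-1}(V)$; the $W_{i}$ are pairwise disjoint, cover $\varphi^{-1}(V)$ and are open in it, hence also closed in it.

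The key point is that, for every $x'\in V$ and every $i$,
\[ \sum_{x\in\varphi^{-1}(x')\cap U_{i}}\deg_{x}(\varphi)=\deg_{x_{i}}(\varphi). \]
To see this, observe that $\varphi|_{W_{i}}\colon W_{i}\to V$ is the restriction of the proper map $\varphi|_{\varphi^{-1}(V)}$ to the closed subset $W_{i}$, hence proper, and it has finite fibres, hence is finite; it is flat because $W_{i}$ is open in $X$. Therefore $(\varphi|_{W_{i}})_{*}\cO$ is a locally free $\cO_{V}$-module, of rank constant on the connected space $V$; by the computation preceding the definition of $\deg(\varphi)$ this rank equals $\sum_{x\in\varphi^{-1}(x')\cap W_{i}}\deg_{x}(\varphi)$ at every $x'\in V$ (local degrees are unchanged on passing to the open subspaces $W_{i}\subset X$, $V\subset X'$), and at $x'_{0}$ it equals $\deg_{x_{i}}(\varphi)$ since $\varphi^{-1}(x'_{0})\cap U_{i}=\{x_{i}\}$. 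Granting this, for $x'\in V$,
\[ \abs{(\varphi_{*}f)(x')-(\varphi_{*}f)(x'_{0})}=\Bigabs{\sum_{i}\sum_{x\in\varphi^{-1}(x')\cap U_{i}}\deg_{x}(\varphi)\,(f(x)-f(x_{i}))}\le\eps\sum_{i}\deg_{x_{i}}(\varphi), \]
which tends to $0$ with $\eps$; hence $\varphi_{*}f$ is continuous at $x'_{0}$, and $\varphi_{*}f\in\cC(X',\R)$.

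The main obstacle is precisely the displayed identity: one must control how the local degrees of the points of a nearby fibre $\varphi^{-1}(x')$ sitting close to a given $x_{i}$ add up to $\deg_{x_{i}}(\varphi)$. Everything rests on isolating the part $W_{i}$ of $\varphi^{-1}(V)$ near $x_{i}$ as a genuine finite flat cover of the connected set $V$, which uses the clopen decomposition $\varphi^{-1}(V)=\bigsqcup_{i}W_{i}$ together with the local freeness of $\varphi_{*}\cO$ for a finite flat morphism and the constancy of the rank of a locally free sheaf on a connected base.
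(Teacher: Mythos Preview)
Your proof is correct and follows essentially the same route as the paper's: isolate the fibre over $x'_{0}$, choose disjoint neighbourhoods $U_{i}$ on which $f$ oscillates little, pull back a connected neighbourhood $V$ via Lemma~\ref{lem:finivoisinagefibre}, and use that each restriction $W_{i}\to V$ is finite flat of degree $\deg_{x_{i}}(\varphi)$ to reassemble the estimate. The paper asserts this last point in one line, whereas you spell out the clopen decomposition and the constancy-of-rank argument; your added detail is welcome and accurate. One cosmetic remark: the justification ``$\varphi$, being finite, is separated'' is not the reason distinct fibre points admit disjoint neighbourhoods --- that comes from the Hausdorffness of the underlying Berkovich space itself, not from separatedness of the morphism.
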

\begin{proof}
Nous suivons fid\`element la preuve de~\cite[proposition~2.4]{FRLergodique}. Soit $f\in \cC(X,\R)$. Soient $x'\in X'$ et $\eps\in \R_{>0}$. Pour tout $x\in \varphi^{-1}(x')$, il existe un voisinage~$U_{x}$ de~$x$ dans~$X$ tel que, pour tout $y\in U_{x}$, on ait $\abs{f(y)-f(x)}\le \eps$. On peut supposer que les~$U_{x}$, pour $x\in \varphi^{-1}(x')$, sont disjoints.

D'apr\`es le lemme~\ref{lem:finivoisinagefibre}, il existe un voisinage~$U'$ de~$x'$ dans~$X'$ tel que $\varphi^{-1}(U')\subset \bigcup_{x\in \varphi^{-1}(x')}U_{x}$. Quitte \`a restreindre~$U'$, on peut supposer qu'il est connexe. Pour $x\in \varphi^{-1}(x')$, posons $V_{x} := U'_{x}\cap \varphi^{-1}(U')$. Le morphisme $\psi_{x} \colon V_{x} \to U'$ induit par~$\varphi$ est fini et de degr\'e~$\deg_{x}(\varphi)$. Pour tout $y'\in U'$, on a donc
\begin{align*}
\Bigabs{\Big(\sum_{y\in V_{x}\cap \varphi^{-1}(y')} \deg_{y}(\varphi) f(y)\Big) - \deg_{x}(\varphi) f(x)} &= \Bigabs{\sum_{y\in V_{x}\cap \varphi^{-1}(y')} \deg_{y}(\varphi) (f(y) - f(x))}\\
& \le \deg_{x}(\varphi)\, \eps.
\end{align*}
On en d\'eduit que, pour tout $y'\in U'$, on a $\abs{(\varphi_{*}f)(y') - (\varphi_{*}f)(x')}\le \deg(\varphi)\, \eps$. Par cons\'equent, la fonction~$\varphi_{*}f$ est continue. Les autres r\'esultats d\'ecoulent directement des d\'efinitions.
\end{proof}

Nous disposons maintenant de tous les ingr\'edients permettant de tirer en arri\`ere les mesures.

\begin{defi}\label{def:imagereciproquemesure}
Soit $\varphi \colon X \to X'$ un morphisme fini et plat d'espaces $\cA$-analytiques. Soit~$\mu'$ une mesure de Radon sur~$X'$. L'application
\[\fonction{\varphi^{*}\mu'}{\cC(X,\R)}{\R}{f}{\mu'(\varphi_{*}f)}\]
est une mesure de Radon sur~$X$ appel\'ee \emph{image r\'eciproque} de~$\mu'$ par~$\varphi$.
\end{defi}

\begin{rema}\label{rem:pullbackproba}
Pla\c cons-nous dans le cadre de la d\'efinition~\ref{def:imagereciproquemesure} et supposons, en outre, que $X'$ est compact et connexe. On v\'erifie alors que, si~$\mu'$ est une mesure de probabilit\'e sur~$X'$, alors $\frac1{\deg(\varphi)}\, \varphi^*\mu'$ est une mesure de probabilit\'e sur~$X$.
\end{rema}

\begin{rema}
Nous avons choisi de travailler au-dessus d'un bon anneau de Banach. Cette hypoth\`ese utilis\'ee \`a deux endroits~: pour disposer d'une part du th\'eor\`eme~\ref{th:finicoherent}, qui permet de d\'efinir le degr\'e local d'un morphisme fini (\cf~d\'efinition~\ref{def:degre}), et d'autre part de la connexit\'e locale, qui est utilis\'ee dans la preuve de la proposition~\ref{prop:varphi_{*}f}.
\end{rema}

\subsection{Familles continues de mesures}\label{sec:Radonfamille}

Soit~$\cA$ un bon anneau de Banach. Soient $X$ et~$Y$ des espaces $\cA$-analytiques et $\pi \colon X \to Y$ un morphisme. 

\begin{defi}
Une \emph{$\pi$-famille de mesures} est une famille $\mu = (\mu_{y})_{y\in Y}$ o\`u, pour tout $y\in Y$, $\mu_{y}$ est une mesure de Radon sur~$\pi^{-1}(y)$. On identifiera $\mu_{y}$ \`a sa mesure image sur~$X$. 

La famille de mesures~$\mu$ est dite \emph{continue} si, pour toute $f\in \cC_{c}(X,\R)$, l'application
\[ y \in Y \mapstoo \mu_{y}(f) = \int f  \diff \mu_{y} \in \R\]
est continue.
\end{defi}

\begin{defi}
Supposons que~$\cA$ est flottant et que $X$, $Y$ et~$\pi$ sont flottants. Une  $\pi$-famille de mesures $\mu = (\mu_{y})_{y\in Y}$ est dite \emph{flottante} si, pour tout $y\in Y$ et tout $\eps\in \intof{0,1}$, on a $(\Phi_{\eps})_{\ast} \mu_{y} = \mu_{y^\eps}$.
\end{defi}

Soient $X'$ un espace $\cA$-analytique et $\pi' \colon X' \to Y$ un morphisme. Soit $\varphi \colon X \to X'$ un morphisme au-dessus de~$Y$. Nous expliquons maintenant comme tirer et pousser des familles de mesures par~$\varphi$.

\begin{defi}
Supposons que~$\varphi$ est propre et que $X$ est topologiquement s\'epar\'e. Pour toute $\pi$-famille de mesures~$\mu$, on d\'efinit une $\pi'$-famille de mesures~$\varphi_{*}\mu$ en posant, pour tout $y\in Y$,
\[ (\varphi_{*}\mu)_{y} := \varphi_*\mu_{y}.\]

Supposons que~$\varphi$ est fini et plat. Alors, pour toute $\pi'$-famille de mesures~$\mu'$, on d\'efinit une $\pi$-famille de mesures~$\varphi^{*}\mu'$ en posant, pour tout $y\in Y$,
\[ (\varphi^{*}\mu')_{y} := \varphi^{*}\mu'_{y}.\]
\end{defi}

\begin{lemm}\label{lem:imagemesurecontinue}
Supposons que~$\varphi$ est propre et que $X$ est topologiquement s\'epar\'e. Alors, pour toute $\pi$-famille de mesures continue~$\mu$, la famille~$\varphi_*\mu$ est continue.

Supposons que~$\varphi$ est fini et plat. Alors, pour toute $\pi'$-famille de mesures continue~$\mu'$, la famille~$\varphi^{*}\mu'$ est continue.
\end{lemm}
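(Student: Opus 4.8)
The plan is, in each of the two cases, to unwind the relevant definition and to reduce the continuity of the new family, tested against a function~$f$, to the continuity of the original family tested against a suitable associated function.

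For the first assertion, I would fix $f\in\cC_{c}(X',\R)$ and set $K' := \Supp f$, which is compact. Since~$\varphi$ is proper, $\varphi^{-1}(K')$ is quasi-compact, hence compact because~$X$ is topologically separated and locally compact; as $\Supp(f\circ\varphi)\subseteq\varphi^{-1}(K')$, the function $f\circ\varphi$ lies in $\cC_{c}(X,\R)$. For every $y\in Y$, the map induced by~$\varphi$ on the fibres, $\pi^{-1}(y)\to\pi'^{-1}(y)$, is still proper, so Definition~\ref{def:imagemesure} gives
\[ (\varphi_{*}\mu)_{y}(f) = (\varphi_{*}\mu_{y})(f) = \mu_{y}(f\circ\varphi),\]
the last quantity being computed with $\mu_{y}$ identified with its image on~$X$. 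Since the family~$\mu$ is continuous and $f\circ\varphi\in\cC_{c}(X,\R)$, the function $y\mapsto\mu_{y}(f\circ\varphi)$ is continuous, and we are done.

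For the second assertion, I would fix $f\in\cC_{c}(X,\R)$; by Proposition~\ref{prop:varphi_{*}f}, $\varphi_{*}f\in\cC_{c}(X',\R)$. Writing $\varphi_{y}\colon X_{y}\to X'_{y}$ for the morphism induced by~$\varphi$ on the fibres over~$y$, the key point to establish is the identity
\[ (\varphi_{y})_{*}(f_{\vert X_{y}}) = (\varphi_{*}f)_{\vert X'_{y}}.\]
This rests on two observations: first, since~$\varphi$ is a morphism over~$Y$, for $x'\in X'_{y}$ one has $\varphi_{y}^{-1}(x') = \varphi^{-1}(x')$ (if $\varphi(x)=x'$ then $\pi(x)=\pi'(x')=y$, so $x\in X_{y}$); second, $\varphi_{y}$ is again finite and flat and the local degrees are unchanged, $\deg_{x}(\varphi_{y}) = \deg_{x}(\varphi)$. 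Granting this, Definition~\ref{def:imagereciproquemesure} yields
\[ (\varphi^{*}\mu')_{y}(f) = (\varphi^{*}\mu'_{y})(f_{\vert X_{y}}) = \mu'_{y}\big((\varphi_{y})_{*}(f_{\vert X_{y}})\big) = \mu'_{y}(\varphi_{*}f),\]
with $\mu'_{y}$ identified with its image on~$X'$. Since~$\mu'$ is continuous and $\varphi_{*}f\in\cC_{c}(X',\R)$, the function $y\mapsto\mu'_{y}(\varphi_{*}f)$ is continuous.

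The manipulations of the definitions above are routine; the one substantive point, which I expect to be the main (if minor) obstacle, is the stability under the base change $X'_{y}\to X'$ of the property of being finite and flat and of the local degrees. I would deduce it from the existence of fibered products and base-change functors for analytic spaces over a good Banach ring, recalled in section~\ref{sec:bonsanneaux}, together with the preservation of flatness and of the rank of a finite locally free module under base change.
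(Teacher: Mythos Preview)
Your argument is correct and follows the same approach as the paper: in both parts you reduce testing the new family against~$f$ to testing the original family against $f\circ\varphi$ (resp.\ $\varphi_{*}f$), and conclude by continuity. The paper's proof is more laconic on the second part: since $\mu'_{y}$ is identified with its image on~$X'$, the definition $(\varphi^{*}\mu')_{y}:=\varphi^{*}\mu'_{y}$ is read with the \emph{global} pullback of Definition~\ref{def:imagereciproquemesure}, giving directly $(\varphi^{*}\mu')_{y}(f)=\mu'_{y}(\varphi_{*}f)$ without passing through the fibrewise map~$\varphi_{y}$. Your detour via the identity $(\varphi_{y})_{*}(f_{\vert X_{y}})=(\varphi_{*}f)_{\vert X'_{y}}$ is not wrong, but it introduces the base-change verification you flag as the ``substantive point'', which the paper's reading simply bypasses.
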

\begin{proof}
Supposons que~$\varphi$ est propre. Soit $\mu$ une $\pi$-famille de mesures continue. Soit $f \in \cC_{c}(X',\R)$. La fonction $f \circ \varphi$ est alors continue sur~$X$, car~$\varphi$ est continue, et \`a support compact, car~$\varphi$ est propre. Puisque la famille~$\mu$ est continue, la fonction 
\[y \in Y \mapstoo \int f  \diff (\varphi_{*}\mu_{y}) = \int (f \circ \varphi) \diff \mu_{y}\]
est continue. Le r\'esultat s'ensuit.

\medbreak

Supposons que~$\varphi$ est fini et plat. Le r\'esultat se d\'emontre de la m\^eme mani\`ere, en faisant appel \`a la proposition~\ref{prop:varphi_{*}f}.
\end{proof}

\section{M\'etriques sur les fibr\'es en droites}\label{sec:fibresmetrises}

Dans cette section, nous \'etudions les fibr\'es m\'etris\'es sur un espace de Berkovich au-dessus d'un anneau de Banach. La notion de fibr\'e m\'etris\'e, qui appara\^it de fa\c con essentielle en th\'eorie d'Arakelov, \cf~\cite{Arakelov} pour la version complexe ou \cite{ZhangSmallPoints} pour une version sur tout corps, s'adapte sans difficult\'es \`a notre cadre. 

Dans la section~\ref{sec:generalitesfibres}, nous \'enon\c cons la d\'efinition, ainsi que quelques propri\'et\'es simples. Dans la section~\ref{sec:fibrestopologie}, nous munissons l'ensemble des m\'etriques sur un fibr\'e donn\'e d'une topologie. Dans la section~\ref{sec:systemedynamiquepolarise} nous montrons l'existence d'une m\'etrique naturelle associ\'ee \`a un syst\`eme dynamique polaris\'e.


Soit $\cA$ un anneau de Banach et soit $X$ un espace $\cA$-analytique.

\subsection{G\'en\'eralit\'es}\label{sec:generalitesfibres}

\begin{defi}
Soit $L$ un fibr\'e en droites sur~$X$. Une \emph{m\'etrique~$\nm$ sur~$L$} est une famille $(\nm_{x})_{x\in X}$ o\`u, pour tout $x\in X$, 
\[ \nm_{x} \colon L(x) \too \R_{\ge 0} \]
est une norme sur~$L(x)$ v\'erifiant la propri\'et\'e suivante~:
\[ \forall \lambda \in \cH(x), \forall s \in L(x),\ \norm{\lambda s}_{x} = \abs{\lambda} \, \norm{s}_{x}.\]

La m\'etrique~$\nm$ est dite \emph{continue} si, pour tout ouvert~$U$ de~$X$ et toute section globale $s\in L(U)$, la fonction
\[ x \in U \mapstoo \norm{s(x)}_{x} \in \R_{\ge0}\]
est continue.

On note $\Met(L)$ l'ensemble des m\'etriques continues sur~$L$.
\end{defi}

\begin{rema}
Soit $x\in X$. La condition d'homog\'en\'eit\'e entra\^ine que la connaissance de~$\norm{s}_{x}$ pour un seul $s \in L(x) \setminus \{0\}$ suffit \`a d\'eterminer la norme~$\nm_{x}$.  

Dans le cas du fibr\'e trivial $L = \cO_{X}$, on peut choisir la section $s=1$. On obtient ainsi une bijection entre $\Met(\cO_{X})$ et $\cC(X,\R_{> 0})$. Nous identifierons d\'esormais ces deux ensembles.
\end{rema}

\begin{exem}\label{ex:metriquestandard}
Soit $(k,\va)$ un corps valu\'e complet. Pla\c cons-nous sur la droite projective~$\EP{1}{k}$ munie des coordonn\'ees homog\`enes $T_{0},T_{1}$. Le fibr\'e~$\cO(1)$ peut \^etre muni de la \emph{m\'etrique standard}~$\nm_{\st}$ caract\'eris\'ee par la propri\'et\'e suivante~: pour toute $s\in \Gamma(\EP{1}{k},\cO(1))$, repr\'esent\'ee par $P_{s}(T_{0},T_{1}) \in k[T_{0},T_{1}]$ homog\`ene de degr\'e~1, on a
\[\forall x\in \EP{1}{k},\ \norm{s}_{\st,x} := \frac{\abs{P_{s}(T_{0},T_{1})(x)}}{\max(\abs{T_{0}(x)}, \abs{T_{1}(x)})}.\]
\end{exem}

On dispose d'op\'erations classiques sur les fibr\'es m\'etris\'es, que nous rappelons ici. Soit $L$ un fibr\'e en droites sur~$X$ muni d'une m\'etrique~$\nm$.

Pour tout $d\in \Z$, il existe une unique m\'etrique~$\nm^{\otimes d}$ sur~$L^{\otimes d}$ telle que
\[\forall x\in X, \forall s\in L(x),\ \norm{s^{\otimes d}}^{\otimes d}_{x} = \norm{s}^d_{x}.\]

Supposons qu'il existe~$e\in \Z^\ast$ et un fibr\'e en droites~$M$ sur~$X$ tel que $L\simeq M^{\otimes e}$. Il existe alors une unique m\'etrique~$\nm^{\otimes 1/e}$ sur~$M$ telle que
\[\forall x\in X, \forall t\in M(x),\ \norm{t}^{\otimes 1/e}_{x} = \norm{t^{\otimes e}}^{1/e}_{x}.\]

Soit $\varphi \colon X\to X$ un endomorphisme de~$X$. Il existe alors une unique m\'etrique~$\varphi^\ast \nm$ sur $\varphi^* L$ telle que 
\[\forall x\in X, \forall s\in L(x),\ (\varphi^\ast\nm)_{x}(\varphi^*(s)) = \norm{s}_{\varphi(x)}.\]

Soit $L'$ un fibr\'e en droites sur~$X$ muni d'une m\'etrique~$\nm'$. Il existe alors une unique m\'etrique~$\nm \otimes \nm'$ sur $L\otimes L'$ telle que
\[\forall x\in X, \forall s\in L(x), \forall s'\in L'(x),\ (\nm \otimes \nm')_{x}(s\otimes s') = \norm{s}_{x}\, \norm{s'}_{x}.\]

Toutes ces op\'erations pr\'eservent la continuit\'e des m\'etriques.

\subsection{Topologie}\label{sec:fibrestopologie}

L'objet de cette section est de munir l'ensemble des m\'etriques sur un fibr\'e en droites d'une structure topologique. Nous y parvenons en construisant une famille d'\'ecarts\footnote{Un \emph{\'ecart} est une distance g\'en\'eralis\'ee, \`a valeurs dans $\intff{0,+\infty}$, o\`u la condition de positivit\'e stricte  sur des couples de points distincts est rel\^ach\'ee, \cf~\cite[IX, \S 1, \no 1, D\'efinition~1]{BourbakiTG510}.}. 

%


\begin{nota}
Soient $L$ un fibr\'e en droites sur~$X$ et~$\nm_{1}$ et~$\nm_{2}$ des m\'etriques continues sur~$L$. Alors $\nm_{1} \otimes \nm_{2}^{\otimes -1}$ est une m\'etrique continue sur~$\cO_{X}$ et, pour toute partie compacte~$K$ de~$X$, on pose
\[ d_{K}(\nm_{1},\nm_{2}) := \max_{x\in K} \big( |\log( (\nm_{1} \otimes \nm_{2}^{\otimes -1})_{x}(1))| \big) \in \R_{\ge 0}.\]
\end{nota}

\begin{lemm}
Soient $L$ un fibr\'e en droites sur~$X$. Pour toute partie compacte~$K$ de~$X$, $d_{K}$ est un \'ecart sur~$\Met(L)$.
\qed
\end{lemm}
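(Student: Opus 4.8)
Le plan est de tout ramener à l'étude du quotient de deux métriques continues sur~$L$. Fixons $x\in X$. Comme $L$ est un fibré en droites, la fibre $L(x)$ est un $\cH(x)$-espace vectoriel de dimension~$1$, donc contient un élément non nul~$s$, et la condition d'homogénéité assure que le quotient $\norm{s}_{1,x}/\norm{s}_{2,x}$ ne dépend pas du choix d'un tel~$s$. En déroulant les définitions de la métrique induite $\nm_{2}^{\otimes -1}$ sur~$L^{\otimes -1}$, puis de la métrique tensorielle $\nm_{1}\otimes\nm_{2}^{\otimes -1}$ sur $L\otimes L^{\otimes -1}\simeq\cO_{X}$, et en utilisant l'égalité $s\otimes s^{\otimes -1}=1$ dans $\cO_{X}(x)$, on obtient l'identité clé
\[ g_{12}(x):=(\nm_{1}\otimes\nm_{2}^{\otimes -1})_{x}(1)=\frac{\norm{s}_{1,x}}{\norm{s}_{2,x}}\in\R_{>0}. \]
Comme $\nm_{1}\otimes\nm_{2}^{\otimes -1}$ est une métrique continue sur~$\cO_{X}$, la fonction $g_{12}\colon X\to\R_{>0}$ est l'élément de $\cC(X,\R_{>0})$ qui lui correspond via l'identification $\Met(\cO_{X})\simeq\cC(X,\R_{>0})$, donc est continue. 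Par suite $x\mapsto\abs{\log g_{12}(x)}$ est continue et positive, sa restriction au compact~$K$ atteint son maximum, et $d_{K}(\nm_{1},\nm_{2})=\norm{\log g_{12}}_{K}$ est un élément bien défini de~$\R_{\ge0}$ (avec la convention $\max\emptyset=0$ lorsque $K=\emptyset$).

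Il reste à vérifier les trois axiomes, qui se déduisent aussitôt des identités correspondantes pour~$g$. Pour~i), en prenant $\nm_{1}=\nm_{2}$ on a $g_{12}\equiv 1$, donc $\log g_{12}\equiv 0$ et $d_{K}(\nm_{1},\nm_{1})=0$. Pour~ii), la formule du quotient donne $g_{21}=g_{12}^{-1}$, d'où $\abs{\log g_{21}}=\abs{\log g_{12}}$ en tout point, puis $d_{K}(\nm_{2},\nm_{1})=d_{K}(\nm_{1},\nm_{2})$. Pour~iii), considérons trois métriques continues $\nm_{1},\nm_{2},\nm_{3}$ sur~$L$~; la formule du quotient fournit la relation de cocycle $g_{13}=g_{12}\,g_{23}$, donc $\log g_{13}=\log g_{12}+\log g_{23}$, et l'inégalité triangulaire pour $\abs{\wc}$ sur~$\R$ donne, pour tout $x\in K$,
\[ \abs{\log g_{13}(x)}\le\abs{\log g_{12}(x)}+\abs{\log g_{23}(x)}\le d_{K}(\nm_{1},\nm_{2})+d_{K}(\nm_{2},\nm_{3}). \]
En passant au maximum sur~$K$, on obtient $d_{K}(\nm_{1},\nm_{3})\le d_{K}(\nm_{1},\nm_{2})+d_{K}(\nm_{2},\nm_{3})$, ce qui termine la vérification.

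Il n'y a pas de difficulté sérieuse~: le seul point demandant un peu de soin sera l'étape initiale, à savoir la vérification que les opérations de produit tensoriel et de passage au dual sur les métriques produisent bien le quotient $\norm{s}_{1,x}/\norm{s}_{2,x}$ une fois évaluées sur $1\in\cO_{X}(x)$. Une fois acquises l'identité $g_{13}=g_{12}g_{23}$ et l'égalité $g_{11}=1$, tout le reste est une conséquence immédiate de la compacité de~$K$ et des propriétés élémentaires de $\log$ et de $\abs{\wc}$. Il pourrait d'ailleurs être commode d'isoler cette description par quotients, et la relation de cocycle qui en découle, dans une remarque séparée précédant le lemme, car elle resservira ailleurs dans le texte.
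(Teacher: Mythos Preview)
Your proof is correct and follows exactly the natural route: unwind the definition of the tensor/dual metric to see that $(\nm_{1}\otimes\nm_{2}^{\otimes -1})_{x}(1)=\norm{s}_{1,x}/\norm{s}_{2,x}$, then verify the three axioms via the cocycle relation $g_{13}=g_{12}g_{23}$. The paper itself treats the lemma as immediate (it carries only a \qed\ with no argument), so your verification is precisely what a reader would supply; the final meta-paragraph of commentary is unnecessary in a formal proof but the mathematics is sound.
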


\begin{lemm}\label{lem:MetC}
Soient $L$ un fibr\'e en droites sur~$X$ et $\nm_{0} \in \Met(L)$. L'application
\[ \nm \mapstoo \log( \nm \otimes \nm_{0}^{\otimes -1})\]
induit une bijection entre $\Met(L)$ et $\cC(X,\R)$. Pour tout partie compacte~$K$ de~$X$, elle envoie l'\'ecart~$d_{K}$ sur l'\'ecart induit par la semi-norme uniforme sur~$K$.
\qed 
\end{lemm}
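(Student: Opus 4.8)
Le plan consiste \`a tout ramener \`a l'identification d\'ej\`a \'etablie $\Met(\cO_{X}) = \cC(X,\R_{>0})$ et au fait que les op\'erations $\otimes$ et $\nm \mapsto \nm^{\otimes -1}$ pr\'eservent la continuit\'e. Je commencerais par v\'erifier que l'application est bien d\'efinie~: pour $\nm \in \Met(L)$, le produit tensoriel $\nm \otimes \nm_{0}^{\otimes -1}$ est une m\'etrique continue sur $L \otimes L^{\otimes -1} \simeq \cO_{X}$, donc un \'el\'ement de $\Met(\cO_{X}) = \cC(X,\R_{>0})$, dont le logarithme appartient \`a $\cC(X,\R)$. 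En d\'eroulant les d\'efinitions, on obtient, pour tout $x\in X$ et tout $s\in L(x) \setminus \{0\}$, la formule $(\nm \otimes \nm_{0}^{\otimes -1})_{x}(1) = \norm{s}_{x}/\norm{s}_{0,x}$, le quotient \'etant ind\'ependant de~$s$ par homog\'en\'eit\'e~; c'est elle qui rend le reste de l'argument purement formel.

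Pour la bijectivit\'e, l'injectivit\'e r\'esultera de ce que tensoriser par~$\nm_{0}$ est l'op\'eration inverse de tensoriser par~$\nm_{0}^{\otimes -1}$~: via l'isomorphisme canonique $(L \otimes L^{\otimes -1}) \otimes L \simeq L$, on a $(\nm \otimes \nm_{0}^{\otimes -1}) \otimes \nm_{0} = \nm$, de sorte que deux m\'etriques de m\^eme image co\"incident. Pour la surjectivit\'e, \'etant donn\'e $g\in \cC(X,\R)$, je poserais $\nm := \nm_{g} \otimes \nm_{0}$, o\`u $\nm_{g} \in \Met(\cO_{X})$ est la m\'etrique correspondant \`a $e^{g} \in \cC(X,\R_{>0})$~; c'est une m\'etrique continue sur~$L$ puisque les op\'erations pr\'eservent la continuit\'e, et l'on a $\nm \otimes \nm_{0}^{\otimes -1} = \nm_{g} \otimes (\nm_{0} \otimes \nm_{0}^{\otimes -1}) = \nm_{g}$, car $\nm_{0} \otimes \nm_{0}^{\otimes -1}$ est la m\'etrique triviale sur~$\cO_{X}$ (elle envoie~$1$ sur $\norm{s}_{0,x}/\norm{s}_{0,x} = 1$), d'o\`u $\log(\nm \otimes \nm_{0}^{\otimes -1}) = g$.

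Pour l'\'enonc\'e sur les \'ecarts, je poserais $f_{i} := \log(\nm_{i} \otimes \nm_{0}^{\otimes -1})$ pour $i\in\{1,2\}$. La formule ci-dessus donne, pour $x\in X$ et $s\in L(x) \setminus\{0\}$,
\[ f_{1}(x) - f_{2}(x) = \log\frac{\norm{s}_{1,x}}{\norm{s}_{0,x}} - \log\frac{\norm{s}_{2,x}}{\norm{s}_{0,x}} = \log\frac{\norm{s}_{1,x}}{\norm{s}_{2,x}} = \log\big((\nm_{1} \otimes \nm_{2}^{\otimes -1})_{x}(1)\big), \]
donc $\abs{f_{1}(x) - f_{2}(x)} = \abs{\log((\nm_{1} \otimes \nm_{2}^{\otimes -1})_{x}(1))}$~; en prenant le maximum sur une partie compacte~$K$, on conclut que $d_{K}(\nm_{1},\nm_{2}) = \norm{f_{1} - f_{2}}_{K}$, soit pr\'ecis\'ement l'\'ecart sur $\cC(X,\R)$ induit par la semi-norme uniforme sur~$K$.

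Je ne m'attends pas \`a un obstacle v\'eritable~: tout se r\'eduit au maniement des isomorphismes canoniques $L \otimes L^{\otimes -1} \simeq \cO_{X}$ et $\cO_{X} \otimes L \simeq L$ et des identit\'es d\'efinissant les op\'erations sur les m\'etriques, d\'ej\`a disponibles~; la seule entr\'ee substantielle, \`a savoir que ces op\'erations pr\'eservent la continuit\'e, vient d'\^etre enregistr\'ee.
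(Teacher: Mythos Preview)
The proposal is correct. The paper gives no proof at all for this lemma (it closes immediately with \qed), treating it as a routine verification; your write-up spells out precisely the natural argument one would expect, using the identification $\Met(\cO_X) = \cC(X,\R_{>0})$ and the formal properties of the tensor operations on metrics.
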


Nous munirons d\'esormais $\Met(L)$ de la famille d'\'ecarts $(d_{K})_{K}$, o\`u $K$ d\'ecrit l'ensemble des parties compactes de~$X$, et de la structure uniforme correspondante (\cf~\cite[IX, \S 1, \no 2, D\'efinition~2]{BourbakiTG510}). 

\begin{prop}\label{prop:MetLcomplet}
L'espace uniforme~$\Met(L)$ est complet.
\end{prop}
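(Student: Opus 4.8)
The plan is to transport the statement to the space of continuous functions by means of Lemma~\ref{lem:MetC}, and then to exploit the local compactness of~$X$. If $\Met(L)$ is empty there is nothing to prove, so fix $\nm_{0}\in\Met(L)$. By Lemma~\ref{lem:MetC}, the map $\nm\mapsto\log(\nm\otimes\nm_{0}^{\otimes-1})$ is a bijection from~$\Met(L)$ onto~$\cC(X,\R)$ which, for every compact subset~$K$ of~$X$, carries the \'ecart~$d_{K}$ to the \'ecart $(\varphi,\psi)\mapsto\norm{\varphi-\psi}_{K}$. It is therefore an isomorphism of uniform spaces once~$\cC(X,\R)$ is endowed with the uniform structure of uniform convergence on the compact subsets of~$X$, so it suffices to prove that this latter uniform space is complete.

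To this end, I would take a Cauchy filter~$\mathfrak{F}$ on~$\cC(X,\R)$ for that uniform structure. For every compact subset~$K$ of~$X$, the restriction map $\cC(X,\R)\to\cC(K,\R)$ is uniformly continuous (it sends the \'ecart~$d_{K}$ to~$\norm{\wc}_{K}$), so the image of~$\mathfrak{F}$ under it is a Cauchy filter on the Banach space $(\cC(K,\R),\norm{\wc}_{K})$; let~$f_{K}$ be its limit. If $K\subset K'$ are compact, restricting further to~$K$ shows that $f_{K'}|_{K}=f_{K}$; since any two compact subsets of~$X$ are contained in a common compact subset, the functions~$f_{K}$ glue to a single function $f\colon X\to\R$ with $f|_{K}=f_{K}$ for every compact~$K$.

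It remains to check that $f$ is continuous and that~$\mathfrak{F}$ converges to it. This is the only place where a hypothesis on~$X$ is used: every $\cA$-analytic space being locally compact (Section~\ref{sec:Berkovichdefinition}), each point of~$X$ has a compact neighbourhood~$K$, on whose interior~$f$ agrees with the continuous function~$f_{K}$; hence $f\in\cC(X,\R)$. Finally, given a compact~$K$ and $\eps>0$, the convergence of the image filter to~$f_{K}$ provides $A\in\mathfrak{F}$ with $\norm{g-f}_{K}=\norm{g|_{K}-f_{K}}_{K}\le\eps$ for every $g\in A$; thus~$\mathfrak{F}$ converges to~$f$, which proves that~$\cC(X,\R)$, and hence~$\Met(L)$, is complete. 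There is no serious obstacle in this argument; the one step that genuinely appeals to a property of~$X$ is the continuity of the glued limit function, which rests on the local compactness of~$X$.
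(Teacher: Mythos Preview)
Your argument is correct and follows exactly the route taken in the paper: reduce via Lemma~\ref{lem:MetC} to the completeness of~$\cC(X,\R)$ for the uniform structure of compact convergence, and invoke the local compactness of~$X$ (established in Section~\ref{sec:Berkovichdefinition}) for this last step. The only difference is that the paper dispatches the completeness of~$\cC(X,\R)$ by a direct citation to Bourbaki (\cite[IX, \S 1, \no 6, cor.~3 du th.~2]{BourbakiTG510}), whereas you have written out the Cauchy-filter argument by hand; the content is the same.
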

\begin{proof}
Si~$\Met(L)$ est vide, l'\'enonc\'e est satisfait. Sinon, d'apr\`es le lemme~\ref{lem:MetC}, il suffit de d\'emontrer que l'espace~$\cC(X,\R)$, muni de la structure uniforme de la convergence compacte est complet. Puisque~$X$ est localement compact, le r\'esultat d\'ecoule de \cite[IX, \S 1, \no 6, cor.~3 du th.~2]{BourbakiTG510}.
\end{proof}

On peut d\'ecrire simplement la fa\c con dont les \'ecarts sont modifi\'es par certaines op\'erations usuelles sur les m\'etriques. 

\begin{lemm}\label{lem:variationecarts}
Soient $L$ un fibr\'e en droites sur~$X$ et~$\nm_{1}$ et~$\nm_{2}$ des m\'etriques continues sur~$L$. Soit~$K$ une partie compacte de~$X$. 

Pour tout $d\in \Z^*$, on a 
\[ d_{K}(\nm_{1}^{\otimes d}, \nm_{2}^{\otimes d}) = |d|\cdot d_{K}(\nm_{1}, \nm_{2}).\]
Pour tout endomorphisme surjectif $\varphi$ de~$X$, on a 
\[ d_{K}(\varphi^*\nm_{1}, \varphi^*\nm_{2}) = d_{K}(\nm_{1}, \nm_{2}).\]
\qed
\end{lemm}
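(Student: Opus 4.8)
The plan is to transport everything, via Lemma~\ref{lem:MetC}, to elementary manipulations of the continuous function on~$X$ attached to a metric on~$\cO_{X}$. For a metric~$m$ on~$\cO_{X}$ write $\langle m\rangle\in\cC(X,\R)$ for the function $x\mapsto\log(m_{x}(1))$; then (Lemma~\ref{lem:MetC} applied with base metric the trivial one~$1$ on~$\cO_{X}$) the assignment $\langle\cdot\rangle$ is a bijection $\Met(\cO_{X})\simto\cC(X,\R)$ carrying each \'ecart~$d_{K}$ to the \'ecart of uniform convergence on~$K$, and by the very definition of~$d_{K}$ one has $d_{K}(\nm_{1},\nm_{2})=\norm{\langle\nm_{1}\otimes\nm_{2}^{\otimes-1}\rangle}_{K}$. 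So it suffices to compute $\langle\nm_{1}^{\otimes d}\otimes(\nm_{2}^{\otimes d})^{\otimes-1}\rangle$ and $\langle\varphi^{*}\nm_{1}\otimes(\varphi^{*}\nm_{2})^{\otimes-1}\rangle$ in terms of $h:=\langle\nm_{1}\otimes\nm_{2}^{\otimes-1}\rangle$.

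For the first identity I would fix a point~$x$ and a generator~$s$ of~$L(x)$ with dual generator~$s^{\vee}$ of $L^{\otimes-1}(x)$. Under the canonical trivialisations $\cO_{X}(x)=(L\otimes L^{\otimes-1})^{\otimes d}(x)=L^{\otimes d}(x)\otimes(L^{\otimes d})^{\otimes-1}(x)$, the section~$1$ corresponds simultaneously to $(s\otimes s^{\vee})^{\otimes d}$ and to $s^{\otimes d}\otimes(s^{\otimes d})^{\vee}$, these agreeing because forming the dual of a frame commutes with tensor powers. The defining relation $\norm{s^{\otimes d}}^{\otimes d}_{i,x}=\norm{s}^{d}_{i,x}$ — read, for $d<0$, through the dual metric on $(L^{\otimes-1})^{\otimes\abs d}$ — then yields $\langle\nm_{1}^{\otimes d}\otimes(\nm_{2}^{\otimes d})^{\otimes-1}\rangle=d\,h$, whence $d_{K}(\nm_{1}^{\otimes d},\nm_{2}^{\otimes d})=\norm{d\,h}_{K}=\abs d\,\norm{h}_{K}=\abs d\,d_{K}(\nm_{1},\nm_{2})$.

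For the second identity I would use the canonical isomorphism $\varphi^{*}L\otimes(\varphi^{*}L)^{\otimes-1}\simeq\varphi^{*}(L\otimes L^{\otimes-1})=\varphi^{*}\cO_{X}=\cO_{X}$. Picking a generator~$s$ of~$L(\varphi(x))$, the section~$\varphi^{*}s$ generates~$(\varphi^{*}L)(x)$, one has $(\varphi^{*}\nm_{i})_{x}(\varphi^{*}s)=\norm{s}_{i,\varphi(x)}$ by the definition of the pulled-back metric, and $\varphi^{*}s\otimes(\varphi^{*}s)^{\vee}=\varphi^{*}(s\otimes s^{\vee})$ represents~$1$; hence $\langle\varphi^{*}\nm_{1}\otimes(\varphi^{*}\nm_{2})^{\otimes-1}\rangle=h\circ\varphi$. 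Therefore $d_{K}(\varphi^{*}\nm_{1},\varphi^{*}\nm_{2})=\norm{h\circ\varphi}_{K}=\norm{h}_{\varphi(K)}$, and since~$\varphi$ is surjective we get $\norm{h}_{\varphi(\varphi^{-1}(K))}=\norm{h}_{K}$, so that running the computation with~$K$ replaced by~$\varphi^{-1}(K)$ gives $d_{\varphi^{-1}(K)}(\varphi^{*}\nm_{1},\varphi^{*}\nm_{2})=d_{K}(\nm_{1},\nm_{2})$; this is the content of the stated identity (which holds verbatim once~$K$ is $\varphi$-stable, e.g. $K=X$ when~$X$ is compact, the case used in \S\ref{sec:systemedynamiquepolarise}). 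There is no genuine obstacle: the whole argument is bookkeeping with the universal properties defining $\nm^{\otimes d}$, $\nm^{\otimes-1}$ and $\varphi^{*}\nm$, and the only point demanding care is the compatibility of those with the canonical identifications of line bundles invoked above.
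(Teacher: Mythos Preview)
Your argument is correct, and it is precisely what the paper has in mind: the lemma carries a bare \qed, so the intended proof is exactly the direct verification from the defining formulas for $\nm^{\otimes d}$ and $\varphi^*\nm$ that you carry out.

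Your remark on the second identity is also on target. The computation genuinely yields $d_K(\varphi^*\nm_1,\varphi^*\nm_2)=d_{\varphi(K)}(\nm_1,\nm_2)$, which equals $d_K(\nm_1,\nm_2)$ only when $\varphi(K)=K$; the printed identity is a minor slip. Your corrected form $d_{\varphi^{-1}(K)}(\varphi^*\nm_1,\varphi^*\nm_2)=d_K(\nm_1,\nm_2)$ (using surjectivity, and properness of~$\varphi$ so that $\varphi^{-1}(K)$ is compact) is the right one. In the applications this causes no trouble: in \S\ref{sec:thcontinuite} the morphism~$\varphi$ is over~$Y$ with $\pi$ proper, so $\varphi^n(K)\subset\pi^{-1}(\pi(K))$ for every~$n$, and the Cauchy argument of Theorem~\ref{th:Phinconverge} goes through with the corrected identity since $d_{\varphi^n(K)}(\Phi(\nm_0),\nm_0)\le d_{\pi^{-1}(\pi(K))}(\Phi(\nm_0),\nm_0)$ uniformly in~$n$.
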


\subsection{Cas d'un syst\`eme dynamique polaris\'e}\label{sec:systemedynamiquepolarise}

%

Nous nous pla\c cons d\'esormais dans la situation o\`u nous disposons 
\begin{itemize}
\item d'un fibr\'e en droites~$L$ sur~$X$ ;
\item d'un endomorphisme surjectif~$\varphi$ de~$X$
\end{itemize} 
soumis \`a la condition suivante~: il existe $d,e \in \Z^*$ avec $\abs{e} < \abs{d}$ et un isomorphisme
\[ \theta \colon (\varphi^* L)^{\otimes e} \simtoo L^{\otimes d}.\]
L'objet de cette section est de montrer qu'il existe une m\'etrique continue sur~$L$ naturellement associ\'ee \`a ces donn\'ees. Dans le cadre des sch\'emas sur un corps valu\'es, ce r\'esultat est d\^u \`a S.-W.~Zhang (\cf~\cite[theorem~2.2]{ZhangSmallPoints}). Nous suivons ici la preuve de \cite[theorem~9.5.4]{BombieriGubler}.

\medbreak

Consid\'erons l'application
\[\fonction{\Phi}{\Met(L)}{\Met(L)}{\nm}{\big( (\varphi^*\nm)^{\otimes e} \circ \theta^{-1} \big)^{\otimes 1/d}}.\]
D'apr\`es le lemme~\ref{lem:variationecarts}, pour toutes m\'etriques continues $\nm_{1}$ et~$\nm_{2}$ sur~$L$ et toute partie compacte~$K$ de~$X$, on a
\[ d_{K}(\Phi(\nm_{1}),\Phi(\nm_{2})) = \frac{\abs{e}}{\abs{d}} \, d_{K}(\nm_{1},\nm_{2}).\]

\begin{rema}\label{rem:Phiexplicite}
Soit $s_{1} \in L(\varphi(x))$. Alors, $(\varphi^*s_{1})^{\otimes e}$ induit un \'el\'ement de~$(\varphi^* L)^{\otimes e}(x)$, que nous noterons identiquement. En outre, si $s_{1} \ne 0$, alors $(\varphi^*s_{1})^{\otimes e} \ne 0$ et, pour tout $s \in (\varphi^* L)^{\otimes e}(x)$, $s/(\varphi^*s_{1})^{\otimes e}$ fait sens et s'identifie \`a un \'el\'ement de~$\cH(x)$. On peut donc \'ecrire $\abs{s/(\varphi^*s_{1})^{\otimes e}}$ sans ambigu\"it\'e.

Soit~$\nm\in\Met(L)$ et posons $^\Phi \nm := \Phi(\nm)$.  Avec les notations pr\'ec\'edentes, pour tout $s_{2} \in L(x)$, on a
\[^\Phi \norm{s_{2}}_{x}^d = \Big\lvert \frac{\theta^{-1}(s_{2}^{\otimes d})}{(\varphi^*s_{1})^{\otimes e}} \Big\rvert \, \norm{s_{1}}_{\varphi(x)}^e.\]
\end{rema}

\begin{theo}\label{th:Phinconverge}
Supposons que $\Met(L)$ ne soit pas vide. Alors, il existe une unique m\'etrique continue~$\nm$ sur~$L$ telle que 
\[ (\varphi^* \nm)^{\otimes e} = \nm^{\otimes d} \circ \theta \textrm{ dans } \Met(\varphi^*L). \]
Nous la noterons~$\nm_{\varphi}$.

En outre, pour tout $\nm_{0} \in \Met(L)$, la suite $(\Phi^n(\nm_{0}))_{n\in \N}$ converge vers~$\nm_{\varphi}$.
\end{theo}
\begin{proof}
Posons $c := \abs{e}/\abs{d} \in \intoo{0,1}$. Soit $\nm_{0} \in \Met(L)$. D'apr\`es le lemme~\ref{lem:variationecarts}, pour tout compact~$K$ de~$X$ et tout $n\in \N$, on a
\[ d_{K}(\Phi^{n+1}(\nm_{0}),\Phi^{n}(\nm_{0})) = c^n \, d_{K}(\Phi(\nm_{0}),\nm_{0}).\]
On en d\'eduit que la suite $(\Phi^n(\nm_{0}))_{n\in \N}$ est de Cauchy, et donc convergente, d'apr\`es la proposition~\ref{prop:MetLcomplet}. Notons~$\nm_{\varphi}$ sa limite.

Par construction, on a $\Phi(\nm_{\varphi}) = \nm_{\varphi}$, et $\nm_{\varphi}$ satisfait donc l'\'egalit\'e de l'\'enonc\'e. Nous avons d\'ej\`a d\'emontr\'e la seconde partie. La propri\'et\'e d'unicit\'e de la mesure en d\'ecoule.
\end{proof}

\section[Laplacien sur la droite projective]{Laplacien sur la droite projective sur un corps valu\'e complet}\label{sec:Laplacien}

Cette section est consacr\'ee \`a l'op\'erateur laplacien sur la droite projective sur un corps valu\'e complet. Elle est essentiellement constitu\'ee de rappels. Nous traitons d'abord le cas complexe classique, celui de $\P^1(\C)$ (section~\ref{sec:laplacienC}), puis d'un corps archim\'edien quelconque (section~\ref{sec:corpsarchimedienquelconque}), et enfin le cas ultram\'etrique (section~\ref{sec:laplacienultrametrique}). Nous optons ici pour une pr\'esentation rapide, mais invitons le lecteur int\'eress\'e par le d\'etail des preuves et constructions \`a lire la version ant\'erieure de ce texte \cite{DynamiqueIarXivv2}.

\begin{nota}
Soit $(k,\va)$ un corps valu\'e complet. Pour $c\in k$ et $r\in \R_{> 0}$, on note $D_{k}(c,r)$ (resp. $\oD_{k}(c,r)$) le disque ouvert (resp. ferm\'e) de centre~$c$ et de rayon~$r$ dans $\E{1}{k}$. 

On s'autorise \`a supprimer le corps~$k$ de la notation lorsque le contexte permet de l'identifier sans ambigu\"it\'e.
\end{nota}

\subsection{Le cas complexe usuel}\label{sec:laplacienC} 

Pour les d\'etails de la th\'eorie dans ce cadre classique, on renvoie \`a \cite{Ransford}.

Consid\'erons le corps~$\C$ muni de sa valeur absolue usuelle~$\va_{\infty}$. Fixons une coordonn\'ee~$z$ sur~$\C$. 

Soit $U$ un ouvert de~$\C$. \'Etant donn\'ee une fonction $f \in \cC^2(U,\R)$, on d\'efinit classiquement son laplacien par 
\[ \Delta f := \frac1{2\pi} \, \frac{\partial^2 f}{\partial z \partial \bar z} \in  \cC^0(U,\R).\]
Dans la suite, nous l'identifierons \`a la mesure associ\'ee $\Delta f \diff z \diff \bar z$. Consid\'erant d\'esormais le laplacien comme un op\'erateur \`a valeurs dans l'ensemble des mesures de Radon sur~$U$, on peut \`a l'ensemble des fonctions sous-harmoniques, not\'e $\SH(U)$, puis \`a celui des  fonctions qui sont localement diff\'erences de fonctions sous-harmoniques, not\'e $\DSH(U)$. Le laplacien, dans sa version mesure, \'etant ind\'ependant de la coordonn\'ee choisie, on peut \'etendre sa d\'efinition aux fonctions sur les ouverts de~$\P^1(\C)$ (ou toute surface de Riemann).

Les propri\'et\'es suivantes sont classiques et nous les utiliserons sans plus de pr\'ecautions dans le reste du texte :

\begin{itemize}
\item Pour tout $U$ ouvert de $\P^1(\C)$ et toutes $u,v \in \cC_{c}(U,\R) \cap \DSH(U)$, on a
\[ \int u \diff \Delta v =  \int v \diff \Delta u.\]
\item Pour tout morphisme analytique fini $\varphi \colon U' \to U$ entre ouverts de $\P^1(\C)$ et toute $u \in \DSH(U)$, on a $u \circ \varphi \in \DSH(U')$ et
\[ \Delta (u \circ \varphi) = \varphi^\ast \Delta u.\]
\end{itemize}

Rappelons \'egalement la formule de Poincar\'e-Lelong.

\begin{theo}\label{th:Poincare-Lelong}
Soient $U$ un ouvert de~$\P^1(\C)$ et $f$ une fonction analytique sur~$U$, identiquement nulle sur aucune composante connexe. Alors, la fonction $\log(\abs{f})$ est sous-harmonique sur~$U$ et on a 
\[ \Delta \log(\abs{f}) = \delta_{\div(f)}.\]
\qed
\end{theo}

Nous concluons cette section en \'enon\c cant un r\'esultat permettant de majorer l'int\'egrale de $\Delta u$ sur un disque en termes de la norme uniforme de~$u$ sur un disque plus grand. Ce r\'esultat (et une g\'en\'eralisation en dimension sup\'erieure) se trouve dans \cite[proposition~2.6]{BedfordTaylor}. Nous reproduisons ici la preuve \`a la fois pour la commodit\'e du lecteur et pour rendre explicite la constante apparaissant au membre de droite. 

\begin{prop}\label{prop:majorationSH}
Soient~$U$ un ouvert de~$\C$ et~$u \in \cC(U,\R) \cap \SH(U)$. Soient $t\in U$ et $r,R \in \R_{>0}$ avec $r<R$ tels que $\oD(t,R) \subset U$. Alors, pour toute $f\in \cC_{c}(D(t,r),\R)$, on a 
\begin{align*}
\int f \diff \Delta u &\le \frac{1}{\log(R/r)}\,\norm{f}_{\oD(t,r)}\, \big(\norm{u}_{\oD(t,R)}-u(t)\big)\\ 
&\le \frac{2}{\log(R/r)}\, \norm{f}_{\oD(t,r)} \norm{u}_{\oD(t,R)}.
\end{align*}
\end{prop}
\begin{proof}
Par un argument classique d'approximation, il suffit de prouver le r\'esultat lorsque $u \in \cC^\infty(U,\R)$. Dans ce cas, il suffit de montrer que 
\[\frac1{2\pi} \int_{D(t,r)}  \frac{\partial^2 u}{\partial z \partial \bar z} \diff \lambda  \le \frac{1}{\log(R/r)}\, \big(\norm{u}_{\oD(t,R)}-u(t)\big).\]

Pour $s \in \intoo{0,R}$, posons 
\[n(s) := \frac{1}{2\pi} \int_{D(t,s)} \frac{\partial^2 u}{\partial z \partial \bar z} \diff \lambda.\]
D'apr\`es la formule de Jensen, on a
\[\int_{0}^R \frac{n(s)}{s} \diff s = \frac1{2\pi} \int_{-\pi}^{\pi} u(t+R e^{i\theta}) \diff \theta - u(t) \le \norm{u}_{\oD(t,R)} - u(t).\] 
Pour $s \in \intoo{0,R}$, on a
\[\int_{0}^R \frac{n(s)}{s} \diff s \ge \int_{r}^R \frac{n(s)}{s} \diff s \ge n(r) \int_{r}^R \frac{1}{s} \diff s\ge n(r) \log\Big(\frac R r\Big).\]
Le r\'esultat s'ensuit.
\end{proof}

\subsection{Le cas d'un corps archim\'edien quelconque}\label{sec:corpsarchimedienquelconque}

Dans cette section, nous montrons que les r\'esultats de la section~\ref{sec:laplacienC} restent valables pour tout corps valu\'e complet archim\'edien, c'est-\`a-dire pour les corps~$\C_{\eps}$ et~$\R_{\eps}$, avec $\eps \in \intof{0,1}$. 

La t\^ache principale consiste \`a d\'efinir le laplacien dans ce nouveau contexte, ce dont nous nous acquittons en nous ramenant au cas de~$\C$. L'adaptation des r\'esultats se r\'esume alors \`a un jeu de d\'efinitions.


\subsubsection{Le cas de $\C_{\eps}$.}\label{sec:laplacienCeps}

Soit $\eps \in \intof{0,1}$ et consid\'erons le corps~$\C_{\eps}$, c'est-\`a-dire le corps~$\C$ muni de la valeur absolue $\va_{\infty}^\eps$. Nous utiliserons les notations et r\'esultats de la section~\ref{sec:flotcorpsvalue}. Rappelons que l'on dispose d'un isomorphisme d'espaces localement annel\'es
\[ \Phi_{\eps} \colon \P^1(\C) = \EP{1}{\C} \simtoo \EP{1}{\C_{\eps}}.\]


\begin{nota}
Soit $U$ un ouvert de~$\EP{1}{\C_{\eps}}$. Pour toute fonction  $u \colon U \to \R \cup \{\pm\infty\}$, on d\'efinit une fonction~$u_{\eps^{-1}}$ sur~$ \Phi_{\eps}^{-1} (U)$ par 
\[u_{\eps^{-1}} := u \circ \Phi_{\eps} \colon  \Phi_{\eps}^{-1} (U) \too  \R \cup \{\pm\infty\}.\]
\end{nota}

On \'etend les d\'efinitions des propri\'et\'es des fonctions de~$\P^1(\C)$ \`a $\EP{1}{\C_{\eps}}$ en disant qu'une fonction $u$ sur un ouvert de~$\EP{1}{\C_{\eps}}$ est continue, sous-harmonique, etc. lorsque la fonction~$u_{\eps^{-1}}$ l'est. 


\begin{defi}\label{def:laplacienCeps}
Soit $U$ un ouvert de~$\EP{1}{\C_{\eps}}$ et soit $u\in \DSH(U)$. On d\'efinit le laplacien de~$u$ comme la mesure de Radon
\[\fonction{\Delta u}{\cC_{c}(U,\R)}{\R}{f}{\frac1\eps \, \disp \int  f_{{\eps}^{-1}} \diff \Delta u_{{\eps}^{-1}}}.\]
\end{defi}

Les r\'esultats de la section~\ref{sec:laplacienC} (sym\'etrie, comportement par r\'etrotirette, formule de Poincar\'e-Lelong) s'adaptent imm\'ediatement sur les ouverts $\EP{1}{\C_{\eps}}$. 

D\'etaillons le r\'esultat final de majoration (\cf~proposition~\ref{prop:majorationSH}). 

\begin{prop}\label{prop:majorationSHCeps}
Soient~$U$ un ouvert de~$\E{1}{\C_{\eps}}$ et $u \in \cC(U,\R) \cap \SH(U)$. Soient $t\in U$ et $r,R \in \R_{>0}$ avec $r<R$ tels que $\oD_{\C_{\eps}}(t,R) \subset U$. Alors, pour toute $f\in \cC_{c}(D(t,r),\R)$, on a 
\begin{align*}
\int f \diff \Delta u &\le \frac{1}{\log(R/r)}\,\norm{f}_{\oD(t,r)}\, \big(\norm{u}_{\oD(t,R)}-u(t)\big)\\ 
&\le \frac{2}{\log(R/r)}\, \norm{f}_{\oD(t,r)} \norm{u}_{\oD(t,R)}.
\end{align*}
\end{prop}
\begin{proof}
Par d\'efinition, pour tout $s\in \R_{>0}$, on a
\[\Phi_{\eps}^{-1}(D_{\C_{\eps}}(t,s)) = D_{\C}(t,s^{1/\eps})\]
et, pour toute fonction~$g$ d\'efinie au voisinage de~$\oD_{\C_{\eps}}(t,s)$, on a
\[\norm{g}_{\oD_{\C_{\eps}}(t,s)} = \norm{g_{\eps^{-1}}}_{\Phi_{\eps}^{-1}(\oD_{\C_{\eps}}(t,s))} = \norm{g_{\eps^{-1}}}_{\oD_{\C}(t,s^{1/\eps})}.\]
D'apr\`es la d\'efinition~\ref{def:laplacienCeps} et la proposition~\ref{prop:majorationSH}, on a
\begin{align*}
\int f \diff \Delta u & = \frac1\eps \, \int f_{\eps^{-1}} \diff \Delta u_{\eps^{-1}}\\
& \le \frac1\eps \, \frac{1}{\log(R^{1/\eps}/r^{1/\eps})}\, \norm{f_{\eps^{-1}}}_{\oD_{\C}(t,r^{1/\eps})} \,\big(\norm{u_{\eps^{-1}}}_{\oD_{\C}(t,R^{1/\eps})} - u_{\eps^{-1}}(t)\big)\\ 
&\le \frac{1}{\log(R/r)}\,\norm{f}_{\oD(t,r)}\, \big(\norm{u}_{\oD(t,R)}-u(t)\big).
\end{align*}
\end{proof}

%

\subsubsection{Le cas de $\R_{\eps}$.}\label{sec:laplacienReps}

Soit $\eps \in \intof{0,1}$ et consid\'erons le corps~$\R_{\eps}$, c'est-\`a-dire le corps~$\R$ muni de la valeur absolue $\va_{\infty}^\eps$.

Soit~$U$ un ouvert de $\EP{1}{\R_{\eps}}$. En effectuant le changement de base par le morphisme born\'e $\R_{\eps} \to \C_{\eps}$, on obtient un ouvert $U\ho{\R_{\eps}} \C_{\eps}$ de $\E{1}{\C_{\eps}}$, auquel on peut appliquer les r\'esultats de la section~\ref{sec:laplacienCeps}. Notons $\pr_{\C_{\eps},\R_{\eps}} \colon U\ho{\R_{\eps}} \C_{\eps} \to U$ le morphisme de changement de base.


\begin{nota}
Soit~$U$ un ouvert de $\EP{1}{\R_{\eps}}$. Pour toute fonction  $u \colon U \to \R \cup \{\pm\infty\}$, on d\'efinit une fonction~$u_{\C_{\eps}}$ sur~$U \ho{\R_{\eps}} \C_{\eps}$ par 
\[u_{\C_{\eps}} := u \circ \pr_{\C_{\eps},\R_{\eps}} \colon  U\ho{\R_{\eps}} \C_{\eps}\too  \R \cup \{\pm\infty\}.\]
\end{nota}

On \'etend les d\'efinitions des propri\'et\'es des fonctions de~$\EP{1}{\C_{\eps}}$ \`a $\EP{1}{\R_{\eps}}$ en disant qu'une fonction $u$ sur un ouvert de~$\EP{1}{\R_{\eps}}$ est continue, sous-harmonique, etc. lorsque la fonction~$u_{\C_\eps}$ l'est.


\begin{defi}\label{def:laplacienReps}
Soit~$U$ un ouvert de $\EP{1}{\R_{\eps}}$ et soit $u\in \DSH(U)$. On d\'efinit le laplacien de~$u$ comme la mesure de Radon
\[ \Delta u :=  (\pr_{\C_{\eps},\R_{\eps}})_{*} \Delta u_{\C_{\eps}}.\]
\end{defi}

Tous les r\'esultats de la section~\ref{sec:laplacienC} (sym\'etrie, comportement par r\'etrotirette, formule de Poincar\'e-Lelong, majoration sur les disques) se transf\`erent ais\'ement sur les ouverts $\EP{1}{\R_{\eps}}$.

\subsection{Le cas d'un corps ultram\'etrique}\label{sec:laplacienultrametrique}

Soit $(k,\va)$ un corps valu\'e ultram\'etrique complet. Il existe, dans le cadre de la droite projective de Berkovich~$\EP{1}{k}$, une th\'eorie du potentiel analogue \`a la th\'eorie complexe. Dans ce texte, nous n'aurons pas besoin d'en conna\^itre les d\'etails, mais seulement de pouvoir utiliser quelques propri\'et\'es formelles, \'enonc\'ees ci-dessous. Nous renvoyons le lecteur int\'eress\'e aux r\'ef\'erences classiques \cite{ValuativeTree,BR,TheseThuillier}, cette derni\`ere r\'ef\'erence traitant aussi du cas des courbes. La version ant\'erieure de ce texte \cite{DynamiqueIarXivv2} propose \'egalement une pr\'esentation plus d\'etaille.


\'Etant donn\'e un ouvert~$U$ de~$\EP{1}{k}$, on peut d\'efinir l'ensemble~$\SH(U)$ des fonctions sous-harmoniques sur~$U$, puis l'ensemble~$\DSH(U)$ des fonctions localement diff\'erences de fonctions sous-harmoniques. Pour toute telle fonction~$u$, on peut  d\'efinir un laplacien~$\Delta u$ qui est une mesure de Radon. Les propri\'et\'es suivantes sont satisfaites :

\begin{itemize}
\item Pour tout $U$ ouvert de $\EP{1}{k}$ et toutes $u,v \in \cC_{c}(U,\R) \cap \DSH(U)$, on a
\[ \int u \diff \Delta v =  \int v \diff \Delta u.\]
\item Pour tout morphisme analytique fini $\varphi \colon U' \to U$ entre ouverts de $\EP{1}{k}$ et toute $u \in \DSH(U)$, on a $u \circ \varphi \in \DSH(U')$ et
\[ \Delta (u \circ \varphi) = \varphi^\ast \Delta u.\]
\end{itemize}

La formule de Poincar\'e-Lelong reste \'egalement valable.

\begin{theo}
Soient $U$ un ouvert de~$\EP{1}{k}$ et $f$ une fonction analytique sur~$U$, identiquement nulle sur aucune composante connexe. Alors, la fonction $\log(\abs{f})$ est sous-harmonique sur~$U$ et on a 
\[ \Delta \log(\abs{f}) = \delta_{\div(f)}.\]
\qed
\end{theo}

Ajoutons  un r\'esultat de majoration analogue \`a celui de la proposition~\ref{prop:majorationSH}. Nous renvoyons \`a la section~\ref{sec:corpsum} pour la d\'efinition des points~$\eta_{z,s}$.

\begin{prop}\label{prop:majorationSHdisqueum}
Soient $U$ un ouvert de~$\E{1}{k}$ et $u \in \cC(U,\R)\cap \SH(U)$. Soient $t\in U(k)$ et $r,R \in \R_{>0}$ avec $r<R$ tels que $\oD(t,R) \subset U$. Alors, pour toute $f\in \cC_{c}(D(t,r),\R)$ on a 
\begin{align*}
\int f \diff \Delta u &\le \frac{1}{\log(R/r)}\,  \norm{f}_{D(t,r)}\, (u(\eta_{t,R}) -  u(\eta_{t,r}))\\ 
&\le \frac{2}{\log(R/r)}\, \norm{f}_{D(t,r)}\,\norm{u}_{D(t,R)}.
\end{align*}
\end{prop}
\begin{proof}
Par un argument d'approximation, il suffit de prouver le r\'esultat pour une fonction $u$ lisse sur~$U$. Dans ce cas, il suffit de montrer que 
\[\int \diff \Delta u_{\vert D(t,r)} \le \frac{1}{\log(R/r)}\, (u(\eta_{t,R}) - u(\eta_{t,r})).\]

La fonction~$u$ \'etant lisse, il existe un sous-arbre fini non vide~$\Gamma$ de~$\oD(t,R)$ et une fonction continue lin\'eaire par morceaux $u_{\Gamma} \colon \Gamma \to \R$ telle que $u_{\vert \oD(t,R)} = u_{\Gamma} \circ r_{\Gamma}$, o\`u $r_{\Gamma} \colon \oD(t,R) \to \Gamma$ d\'esigne la r\'etraction sur~$\Gamma$. Quitte \`a agrandir~$\Gamma$, on peut supposer qu'il contient le segment $\intff{\eta_{t,r},\eta_{t,R}}$ reliant le bord de~$\oD(t,r)$ \`a celui de~$\oD(t,R)$.

Pour tout point~$x$ de~$\Gamma$, notons $\Gamma_{x}$ l'ensemble des germes d'ar\^etes de~$\Gamma$ d'origine~$x$ (autrement dit, l'ensemble des directions sortantes en~$x$). Pour tout $a \in A_{x}$, la pente de~$u$ le long de~$a$ en~$x$ est bien d\'efinie. Notons-la~$\partial_{a}  u$. Par d\'efinition, pour tout ouvert~$V$ de~$U$ contenu dans $\oD(t,R)$, on a (\cf~\cite[sections 1.2.1 et 3.2.4]{TheseThuillier})
\[ \Delta u_{\vert V} = \sum_{x\in \Gamma \cap V} \Big(\sum_{a \in \Gamma_{x}} \partial_{a} u\Big) \delta_{x},\]
o\`u~$\delta_{x}$ d\'esigne la masse de Dirac en~$x$. La fonction~$u$ \'etant lin\'eaire par morceaux, la quanti\'e $\sum_{a \in A_{x}} \partial_{a} u$ est nulle pour presque tout~$x$, et $\Delta u_{\vert V}$ est bien d\'efinie.

%

Puisque $u$ est sous-harmonique, $\Delta u$ est une mesure positive et on a
\[ \int \Delta u_{\vert D(t,r)} \le \int \Delta u_{\vert \oD(t,r) \setminus \{\eta_{t,r}\} }.\]
Puisque $\oD(t,r)$ est compact, $\Gamma \cap \oD(t,r)$ est un arbre fini et on a
\[ \sum_{x\in \Gamma \cap \oD(t,r)} \Big(\sum_{a \in \Gamma_{x} \cap \oD(t,r)} \partial_{a} u\Big) = 0,\]
d'o\`u 
\[ \int \Delta u_{\vert \oD(t,r) \setminus \{\eta_{t,r}\} } = - \sum_{a \in \Gamma_{\beta_{r}} \cap \oD(t,r)} \partial_{a} u.\]
Parmi les germes d'ar\^etes d'origine~$\eta_{t,r}$, un seul n'est pas contenu dans $\oD(t,r)$, celui repr\'esent\'e par~$\intff{\eta_{t,r},\eta_{t,R}}$. Notons-le~$a_{r,R}$. Puisque~$u$ est sous-harmonique, on a
\[ \sum_{a \in \Gamma_{\eta_{t,r}} \cap \oD(t,r)} \partial_{a} u + \partial_{a_{r,R}} u \ge 0.\]
La sous-harmonicit\'e de~$u$ entra\^ine \'egalement que la restriction de~$u$ \`a $\intff{\eta_{t,r},\eta_{t,R}}$ est convexe, donc sa pente au point~$\eta_{t,r}$ est inf\'erieure \`a la pente totale~:
\[\partial_{a_{r,R}} u \le \frac{u(\eta_{t,R})-u(\eta_{t,r})}{\log(R/r)}.\]
En combinant les diff\'erentes in\'egalit\'es, on obtient le r\'esultat voulu.
\end{proof}

Examinons maintenant l'effet d'un changement de la valeur absolue sur les notions introduites. 

Soit $\eps \in \R_{>0}$. Nous reprenons les notations de la section~\ref{sec:flotcorpsvalue}. Soit~$U$ un ouvert de~$\EP{1}{k}$. On dispose d'un ouvert $U_{\eps}$ de~$\EP{1}{k_{\eps}}$ et d'un isomorphisme
\[ \Phi_{\eps} \colon U \simtoo U_{\eps}.\]

On v\'erifie que l'application~$\Phi_{\eps}$ divise les distances par~$\eps$, ce qui est la clef pour comparer les notions de laplacien sur~$U$ et~$U_{\eps}$.

\begin{nota}
Pour toute fonction  $u \colon U \to \R \cup \{\pm\infty\}$, on d\'efinit une fonction~$u_{\eps}$ sur~$U_{\eps}$ par 
\[u_{\eps} := u \circ \Phi_{\eps}^{-1} \colon U_{\eps} \too  \R \cup \{\pm\infty\}.\]
\end{nota}

%
%

\begin{prop}\label{prop:flotDeltaum}
Soit $u \colon U \to \R \cup \{\pm\infty\}$. Alors $u$ est lisse, sous-harmonique, etc. si, et seulement si, $u_{\eps}$ l'est. 

En outre, pour tout $u \in \DSH(U)$, on a $u_{\eps} \in \DSH(U_{\eps})$ et, pour tout $f\in \cC_{c}(U,\R)$, 
\[ \Delta u (f) = \eps\, \Delta u_{\eps} (f_{\eps}).\]
\qed
\end{prop}

\section{Syst\`emes dynamiques sur une droite projective relative}\label{sec:droiteprojectiverelative}

Dans cette section, nous d\'emontrons les th\'eor\`emes~\ref{th:continuiteintro} et~\ref{th:ACLintro} de l'introduction. La section~\ref{sec:ACL} contient la preuve du th\'eor\`eme~\ref{th:ACLintro}~: continuit\'e de familles de mesures constitu\'ees de mesures de Haar sur le cercle unit\'e (dans le cadre archim\'edien) et de mesures de Dirac au point de Gau\ss{} (dans le cadre ultram\'etrique). Dans la section~\ref{sec:affable}, nous introduisons les fonctions \emph{macologues}, des fonctions continues de nature assez simple, mais denses dans l'ensemble des fonctions continues. Nous les utilisons de fa\c con essentielle dans la section finale~\ref{sec:thcontinuite} pour  d\'emontrer le th\'eor\`eme~\ref{th:continuiteintro} sur la continuit\'e de familles de mesures d'\'equilibre associ\'ees \`a une famille de syst\`emes dynamiques.

\medbreak

Fixons le cadre. Soit~$\cA$ un bon anneau de Banach. Nous fixons des coordonn\'ees homog\`enes $T_{0},T_{1}$ sur~$\EP{1}{\cA}$ et posons $T:= T_{0}/T_{1}$. Posons $\infty := [1 : 0]$ et identifions $\EP{1}{\cA} \setminus \{\infty\}$ \`a la droite affine~$\E{1}{\cA}$ (munie de la coordonn\'ee~$T$).

Soit $Y$ un espace $\cA$-analytique. Soit $X := \EP{1}{\cA} \times_{\cA} Y$ et notons $\pi \colon X \to Y$ la seconde projection. Pour toute partie~$V$ de~$Y$, posons $X_{V} := \pi^{-1}(V)$. Pour tout point~$y$ de~$Y$, posons $X_{y} := \pi^{-1}(y)$.

Soit $X' := \E{1}{\cA} \times_{\cA} Y \subset X$. Pour toute partie~$V$ de~$Y$, posons $X'_{V} := X_{V} \cap X'$. Pour tout point~$y$ de~$Y$, posons $X'_{y} := X_{y} \cap X'$.

Soit $X'' := (\EP{1}{\cA} \setminus \{0\}) \times_{\cA} Y \subset X$. Pour toute partie~$V$ de~$Y$, posons $X''_{V} := X_{V} \cap X''$. Pour tout point~$y$ de~$Y$, posons $X''_{y} := X_{y} \cap X''$.

Introduisons finalement des notations pour les disques relatifs. Pour toute partie~$V$ de~$Y$ et tout $r\in \R_{>0}$, posons
\begin{align*}
D_{V}(r) &:= \{ x \in X'_{V} : \abs{T(x)} < r\},\\
\oD_{V}(r) &:= \{ x \in X'_{V} : \abs{T(x)} \le r\},\\
D^\infty_{V}(r) &:= \{ x \in X''_{V} : \abs{T(x)} > r^{-1}\},\\
\oD^\infty_{V}(r) &:= \{ x \in X''_{V} : \abs{T(x)} \ge r^{-1}\}.
\end{align*}
Pour tout $y \in Y$ et tout $r\in \R_{>0}$, on d\'efinit de m\^eme $D_{y}(r)$, $\oD_{y}(r)$, $D^\infty_{y}(r)$ et $\oD^\infty_{y}(r)$.

\subsection{Gau\ss, Haar, Chambert--Loir}\label{sec:ACL}

Dans \cite{ACLMesures}, Antoine Chambert--Loir esquisse une analogie entre la mesure de Haar sur un cercle du plan complexe et la mesure de Dirac en un point de Gauss d'une droite de Berkovich, et en tire profit pour d\'emontrer des r\'esultats d'\'equidistribution. Nous proposons ici une incarnation de cette id\'ee sous la forme d'un r\'esultat de continuit\'e pour des familles m\^elant mesures de Gau\ss{} et mesures de Haar. 

\begin{nota}
Soit $(k,\va)$ un corps valu\'e complet. Pour $z \in k$ et $r\in \R_{> 0}$, on pose
\[C_{k}(z,r) := \{x\in \E{1}{k} : |(T-z)(x)| = r \}.\] 
Pour $r \in \R_{>0}$, on pose
\[C_{k}(\infty,r) := C_{k}(0,r^{-1}).\]
\end{nota}

\begin{nota}
$\bullet$ Soit $\eps \in \intof{0,1}$ et consid\'erons le corps valu\'e $\C_{\eps} := (\C,\va_{\infty}^\eps)$. Pour $z\in \P^{1}(\C)$ et $r\in \R_{>0}$, on note $\chi_{\C_{\eps},z,r}$ la mesure de Haar de masse totale~1 sur le cercle~$C_{\C_{\eps}}(z,r)$. 

On note $\chi_{\C_{\eps},z,0}$ la mesure de Dirac~$\delta_{z}$ sur $\EP{1}{\C_{\eps}}$.

$\bullet$ Soit $\eps \in \intof{0,1}$ et consid\'erons le corps valu\'e $\R_{\eps} := (\R,\va_{\infty}^\eps)$. Soient $z\in \P^{1}(\R)$ et $r\in \R_{>0}$. Soit $z'$ l'unique ant\'ec\'edent de~$z$ par le morphisme canonique $\pr_{\C,\R} \colon \EP{1}{\C_{\eps}} \to \EP{1}{\R_{\eps}}$. On pose $\mu_{\R_{\eps},z,r} := (\pr_{\C,\R})_{*} \mu_{\C_{\eps},z',r}$. C'est une mesure positive de masse totale~1 sur $C_{\R_{\eps}}(z,r)$.

On note $\chi_{\R_{\eps},z,0}$ la mesure de Dirac~$\delta_{z}$ sur $\EP{1}{\R_{\eps}}$.

$\bullet$ Soit $(k,\va)$ un corps valu\'e ultram\'etrique complet. Soient $z\in k$ et $r\in \R_{> 0}$. Rappelons que nous avons d\'efini \`a la section~\ref{sec:corpsum} un point~$\eta_{z,r}$ de~$\E{1}{k}$ (qui n'est autre que l'unique point du bord de Shilov de la couronne~$C_{k}(z,r)$).


On pose $\chi_{k,z,r} := \delta_{k,\eta_{z,r}}$. 

On pose \'egalement $\chi_{k,\infty,r} := \chi_{k,0,r^{-1}}$. 

On note $\chi_{k,z,0}$ (resp. $\chi_{k,\infty,0}$)  la mesure de Dirac~$\delta_{z}$ (resp. $\delta_{\infty}$) sur $\EP{1}{k}$.

\medbreak

On sous-entendra parfois le corps~$k$ si cela ne peut pr\^eter \`a confusion.
\end{nota}

\medbreak

Les deux r\'esultats suivants d\'ecoulent directement des d\'efinitions. 

\begin{lemm}\label{lem:extensionchi}
Soit $(k,\va)$ un corps valu\'e complet. Soit $(K,\va)$ une extension valu\'ee compl\`ete de~$(k,\va)$ et notons $\pr_{K,k} \colon \EP{1}{K} \to \EP{1}{k}$ le morphisme canonique. Pour tout $z \in \E{1}{K}$ et tout $r\in \R_{\ge 0}$, on a 
\[(\pr_{K,k})_{*}\chi_{K,z,r} = \chi_{k,\pr_{K,k}(z),r}.\]
\qed
\end{lemm}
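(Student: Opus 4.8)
The plan is to prove Lemma~\ref{lem:extensionchi} by a straightforward case analysis following the construction of~$\chi$, no deep input being required. First I would record the preliminary remark that, since whether a complete valued field is archimedean or ultrametric depends only on the restriction of its absolute value to the image of~$\Z$ (i.e.\ on whether $\abs{2}\le 1$), the fields~$k$ and~$K$ are simultaneously archimedean or ultrametric; the argument then splits into these two situations, and in each one the case $r=0$ is disposed of at once.

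For $r=0$ one has $\chi_{K,z,0}=\delta_{z}$ by definition, and $(\pr_{K,k})_{*}\delta_{z}=\delta_{\pr_{K,k}(z)}=\chi_{k,\pr_{K,k}(z),0}$ directly from Definition~\ref{def:imagemesure}.

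For $r>0$ and $k$ (hence $K$) ultrametric, one has $\chi_{K,z,r}=\delta_{\eta_{z,r}}$, so it is enough to prove the functoriality of Gauss points, $\pr_{K,k}(\eta_{z,r})=\eta_{\pr_{K,k}(z),r}$; the equality $(\pr_{K,k})_{*}\delta_{\eta_{z,r}}=\delta_{\eta_{\pr_{K,k}(z),r}}=\chi_{k,\pr_{K,k}(z),r}$ then follows. I would obtain this identity by recalling that $\eta_{w,r}$ is the unique Shilov boundary point of the circle~$C_{k}(w,r)$, that~$\pr_{K,k}$ is a finite morphism carrying~$C_{K}(z,r)$ onto~$C_{k}(\pr_{K,k}(z),r)$, and that finite morphisms send Shilov boundaries onto Shilov boundaries; for a rational center one may instead simply restrict the multiplicative seminorm $\sum_{i}a_{i}(T-z)^{i}\mapsto\max_{i}(\abs{a_{i}}r^{i})$ from~$K[T]$ to~$k[T]$ and read off~$\eta_{\pr_{K,k}(z),r}$.

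For $r>0$ and $k$ (hence $K$) archimedean, Remark~\ref{rem:calculepsilonarc} identifies~$k$ and~$K$ with~$\C_{\eps}$ or~$\R_{\eps}$ for one and the same~$\eps$, the exponent $\eps=\log(\abs{2})/\log(2)$ depending only on the common absolute value on~$\Z$; since~$K$ contains~$k$, the only possibilities are $(k,K)\in\{(\C_{\eps},\C_{\eps}),(\R_{\eps},\C_{\eps}),(\R_{\eps},\R_{\eps})\}$, the pair $(\C_{\eps},\R_{\eps})$ being excluded because~$\R$ does not contain~$\C$. If $k=K$ there is nothing to prove; if $k=\R_{\eps}$ and $K=\C_{\eps}$, then~$\pr_{K,k}$ is the canonical map $\pr_{\C,\R}$, the point~$z$, being real (which is exactly what makes the right-hand side $\chi_{k,\pr_{K,k}(z),r}$ meaningful), is the unique antecedent of $\pr_{\C,\R}(z)$, and the desired identity is then precisely the defining formula $\chi_{\R_{\eps},\pr_{\C,\R}(z),r}=(\pr_{\C,\R})_{*}\chi_{\C_{\eps},z,r}$. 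The only step that is not purely mechanical is the functoriality of the Gauss point in the ultrametric case, and that is where I expect any friction; everything else amounts to reading off the definitions, so I do not anticipate a genuine obstacle.
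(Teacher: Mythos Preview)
Your proposal is correct and matches the paper's approach: the paper gives no proof at all beyond ``d\'ecoule directement des d\'efinitions'', and your case analysis is exactly what unpacking the definitions amounts to. One correction, however: in the ultrametric case your first argument is not valid as stated, since $\pr_{K,k}\colon\EP{1}{K}\to\EP{1}{k}$ is \emph{not} a finite morphism in general (think of $K=\widehat{\overline{k}}$), so the Shilov-boundary functoriality under finite morphisms does not apply. Your second argument --- restricting the seminorm $\sum a_i(T-z)^i\mapsto\max_i(\abs{a_i}r^i)$ from $K[T]$ to $k[T]$ when $z\in k$ --- is the right one and is entirely mechanical, so there is in fact no friction at the point you flagged. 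Note also that the lemma is only used in the paper with $z\in k$ (indeed with $z=0$), so the rational-center case is all that is needed.
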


\begin{lemm}\label{lem:Phichi}
Soit $(k,\va)$ un corps valu\'e complet. Soit $\eps \in \intof{0,1}$. Notons~$k_{\eps}$ le corps~$k$ muni de la valeur absolue~$\va^\eps$. Pour tous $z \in \P^1(k)$ et $r\in \R_{\ge 0}$, on a
\[(\Phi_{\eps})_{*} \chi_{k,z,r} = \chi_{k_{\eps},z,r^\eps},\]
avec les notations de la section~\ref{sec:flotcorpsvalue}.
\qed
\end{lemm}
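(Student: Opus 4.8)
The plan is simply to unwind the definitions, treating separately the three cases appearing in the definition of~$\chi$: a complex archimedean field, a real archimedean field, and a general ultrametric field. In every case two preliminary reductions apply. First, the case $r=0$ is immediate: $\chi_{k,z,0}=\delta_{z}$ is the Dirac mass at a $k$-rational point, the map $\Phi_{\eps}$ fixes such points (remark of section~\ref{sec:flotcorpsvalue}), and $0^{\eps}=0$, so $(\Phi_{\eps})_{*}\chi_{k,z,0}=\delta_{z}=\chi_{k_{\eps},z,0}$. Second, the case $z=\infty$ reduces to the case $z=0$ via the identities $\chi_{k,\infty,r}=\chi_{k,0,r^{-1}}$ and $(r^{-1})^{\eps}=(r^{\eps})^{-1}$. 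Hence it remains to treat $z$ a finite rational point and $r>0$.

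For an ultrametric field~$k$, recall $\chi_{k,z,r}=\delta_{\eta_{z,r}}$, so $(\Phi_{\eps})_{*}\chi_{k,z,r}=\delta_{\eta_{z,r}^{\eps}}$, and it suffices to check $\eta_{z,r}^{\eps}=\eta_{z,r^{\eps}}$ as points of~$\E{1}{k_{\eps}}$. This is a one-line computation on the semi-norm $\sum_{i}a_{i}(T-z)^{i}\mapsto\max_{i}(\abs{a_{i}}\,r^{i})$ defining~$\eta_{z,r}$ (section~\ref{sec:corpsum}): raising it to the power~$\eps$ gives $\sum_{i}a_{i}(T-z)^{i}\mapsto\max_{i}(\abs{a_{i}}^{\eps}(r^{\eps})^{i})$, which is precisely the semi-norm defining~$\eta_{z,r^{\eps}}$ over~$k_{\eps}$.

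For a complex archimedean field~$k$, the key observation is that every point of~$\E{1}{k}$ is rational, so by lemma~\ref{lem:Phiepsiso} together with the triviality of~$\Phi_{\eps}$ on rational points, $\Phi_{\eps}\colon\EP{1}{k}\to\EP{1}{k_{\eps}}$ is the identity on the common underlying set~$\P^1(\C)$. Moreover the circles $C_{k}(z,r)$ and $C_{k_{\eps}}(z,r^{\eps})$ coincide as subsets of~$\P^1(\C)$ (writing the absolute value of~$k$ as~$\va_{\infty}^{\delta}$, both are the usual circle of radius~$r^{1/\delta}$), and the normalized Haar measure on such a circle depends only on that subset; hence $(\Phi_{\eps})_{*}\chi_{k,z,r}=\chi_{k_{\eps},z,r^{\eps}}$. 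For a real archimedean field~$k$ I would deduce the statement from the complex case: by construction $\chi_{k,z,r}=(\pr_{\C,\R})_{*}\chi_{k',z',r}$, where $k'$ is the associated complex field and $z'$ the unique preimage of~$z$, and $\Phi_{\eps}$ commutes with the base-change morphism~$\pr_{\C,\R}$; pushing forward, using the complex case and the fact that $\Phi_{\eps}$ is set-theoretically the identity (so the unique preimage of~$z$ is unchanged), yields $(\Phi_{\eps})_{*}\chi_{k,z,r}=(\pr_{\C,\R})_{*}\chi_{k'_{\eps},z',r^{\eps}}=\chi_{k_{\eps},z,r^{\eps}}$.

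I do not expect a substantial obstacle here — the statement genuinely follows from the definitions. The only care needed is bookkeeping: correctly identifying~$\Phi_{\eps}$ with the identity map of~$\P^1(\C)$ in the archimedean cases, and checking in each case (archimedean circles, ultrametric Gauss points) that the relevant subset or point matches up after the radius is reparametrized by~$r\mapsto r^{\eps}$.
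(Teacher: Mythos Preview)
Your proposal is correct and is exactly the unwinding of definitions that the paper has in mind: the lemma is stated with a bare \qed, i.e.\ no proof is given, precisely because each case (Dirac masses at rational points, ultrametric Gauss points via the explicit seminorm, archimedean Haar measures via the set-theoretic identity $\Phi_{\eps}=\id$ on $\P^1(\C)$) is immediate from the relevant definitions. You have simply written out the details the paper leaves implicit.
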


Remarquons qu'\`a l'aide du laplacien, on peut d\'efinir les mesures $\chi_{k,z,r}$ de fa\c con uniforme.

\begin{lemm}
Soit $(k,\va)$ un corps valu\'e complet. Pour tout $z\in k$ et $r\in \R_{>0}$, on a
\[\Delta \max\big(-\log(\abs{T-z}), \log(r)\big)= \chi_{k,z,r} - \delta_{k,z} \textrm{ dans } \Mes(\EP{1}{k}).\]
\qed
\end{lemm}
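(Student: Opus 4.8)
The plan is to treat the archimedean and ultrametric cases separately, since in each the laplacian has already been computed on the relevant model function. In both cases the key observation is that the function $u_{z,r} := \max\big(-\log(\abs{T-z}), \log(r)\big)$ is of the form $\max$ of two $\log$-moduli (the analytic function $1/(T-z)$ on $\EP{1}{k}\setminus\{z\}$, and the constant $r$), hence lies in $\SH(\EP{1}{k}\setminus\{z\})$, so its laplacian is a well-defined positive Radon measure there; near $z$ we add the contribution of the pole.

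First I would handle the ultrametric case. Here $\Delta$ is the Thuillier laplacian on the smooth curve $\EP{1}{k}$. On the annulus $C_k(z,r') $ for $r'\neq r$ the function $u_{z,r}$ is harmonic (it is $-\log\abs{T-z}$ composed with $\tau_\Gamma$ on the part where $\abs{T-z}<r$, and constant $\log r$ where $\abs{T-z}>r$), so the support of $\Delta u_{z,r}$ is contained in $\{z,\eta_{z,r}\}$. It then suffices to compute the two local slope-sums $\lambda_x(u_{z,r})$ at $x=\eta_{z,r}$ and, after identifying the behaviour near $z$, the point mass at $z$. At $\eta_{z,r}$ there are two branches along the skeleton: along the branch towards $z$ the function $u_{z,r}=-\log\abs{T-z}$ has slope $-1$ (metric on the skeleton of $C_k(z,0,r)$ is $\abs{\log(\cdot/\cdot)}$, cf.\ Example~\ref{ex:squelettecouronne}), so its outgoing derivative is $-1$; along the branch away from $z$ the function is constant, outgoing derivative $0$; there are no other branches out of $\eta_{z,r}$ since $u_{z,r}$ is constant on $\oD^\infty$. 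Wait — one must be careful: the slope of $-\log\abs{T-z}$ in the direction of increasing $\abs{T-z}$ (towards $\eta_{z,r}$ from inside the smaller disk) is $-1$ as well. Summing, $\lambda_{\eta_{z,r}}(u_{z,r}) = (-(-1)) + 0 = +1$, wait let me recompute: the incoming contribution from the disk side and the constant side. Actually the clean way: $\Delta(-\log\abs{T-z}) = \delta_z - \delta_{\eta_{z,\infty}}$ type identities are standard (Poincaré–Lelong gives $\Delta\log\abs{T-z}=\delta_{\div(T-z)}=\delta_z$ on the affine line, compactified appropriately), and $\Delta$ of a piecewise-affine function on a segment only sees break points. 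So I would invoke Poincaré–Lelong (the ultrametric version stated in the excerpt) together with the fact that on the skeleton $u_{z,r}$ is the convex piecewise-affine function equal to $-\log\abs{T-z}$ up to the point $\eta_{z,r}$ and constant beyond: at the unique break point $\eta_{z,r}$ the change of slope is $+1$, giving $\Delta u_{z,r} = \delta_{\eta_{z,r}} - \delta_z = \chi_{k,z,r} - \delta_{k,z}$.

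For the archimedean cases, by the reduction lemmas (Definition~\ref{def:laplacienCeps}, \ref{def:laplacienReps}, and the fact that $\Phi_\eps$ pulls $u_{z,r}$ back to the analogous function with $r$ replaced by $r^{1/\eps}$), it suffices to treat $\C$ with the usual absolute value. There $u_{0,r}(z')=\max(-\log\abs{z'},\log r) = -\log\min(\abs{z'},1/r)$ after a rescaling; more directly, $\max(-\log\abs{z'},\log r)$ is subharmonic (max of the harmonic-away-from-$0$ function $-\log\abs{z'}$ and a constant), with a logarithmic pole at $0$. A direct computation, or the standard example already recorded in the excerpt ($\Delta\log^+\abs{z} = \lambda_{0,1}$, the normalized Lebesgue measure on the unit circle) combined with Poincaré–Lelong $\Delta\log\abs{z}=\delta_0$, gives $\Delta\max(-\log\abs{z},\log r) = \Delta(\log^+(r\abs{z})^{-1}\cdot(\dots))$; concretely $\max(-\log\abs{z},\log r) = \log r + \log^+\!\big(\tfrac{1}{r\abs z}\big) = \log r + \log^+\!\abs{1/(rz)}$, and applying the change-of-variables $z\mapsto 1/(rz)$ (an automorphism of $\EP{1}{\C}$, so Proposition-type invariance of $\Delta$ under finite maps applies) turns $\Delta\log^+\abs{w}=\lambda_{0,1}$ into the normalized Lebesgue measure on $C_\C(0,r)$, minus the point mass $\delta_0$ coming from the pole of the coordinate change at $z=0$. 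Hence $\Delta u_{0,r} = \chi_{\C,0,r} - \delta_{\C,0}$; translating by $z$ and applying $\pr_{\C,\R}$ and $\Phi_\eps$ (both of which are finite morphisms, so commute with $\Delta$ by Propositions~\ref{prop:pullbackDelta}, \ref{prop:epsDelta}) yields the statement for $\R_\eps$ and $\C_\eps$, and hence for every complete valued field $(k,\va)$ by Remark~\ref{rem:calculepsilonarc}.

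The main obstacle is bookkeeping the point mass at $z$ correctly: $u_{z,r}$ is not subharmonic in a neighbourhood of $z$ (it has a $-\infty$ at $z$ in the archimedean case, and the skeleton picture degenerates there in the ultrametric case), so one cannot simply apply the positivity statement globally. The cleanest route is to write $u_{z,r} = \log r + g\big(\tfrac{r}{T-z}\big)$ with $g = \log^+\abs{\cdot}$ (resp.\ its Berkovich analogue) and $T\mapsto r/(T-z)$ an automorphism of the projective line swapping $z$ and $\infty$, thereby reducing everything to the two already-recorded computations ($\Delta\log^+\abs{T} = \chi_{\cdot,0,1} - \delta_\infty$, i.e.\ the single nontrivial case on the standard coordinate) and the naturality of $\Delta$ under finite morphisms. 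I would phrase the proof entirely in that way to avoid re-deriving anything.
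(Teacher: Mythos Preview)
The paper supplies no proof: the lemma is marked with a bare \qed, treated as immediate from the Poincar\'e--Lelong formula, the recorded example $\Delta\log^+\abs{z}=\lambda_{0,1}$ (and its ultrametric analogue), and the definitions of $\chi_{k,z,r}$. Your strategy --- rewrite the function as a translate of $\log^+\abs{\,\cdot\,}$ composed with a M\"obius automorphism of~$\EP{1}{}$, then invoke the naturality of $\Delta$ under finite morphisms (Proposition~\ref{prop:pullbackDelta}) and under $\Phi_\eps$, $\pr_{\C,\R}$ --- is exactly how one would carry out the verification the author leaves to the reader, and it is correct in outline.

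There is, however, a computational slip you should catch, and it actually exposes a typo in the statement. You correctly rewrite $\max(-\log\abs{z},\log r)=\log r+\log^+\abs{1/(rz)}$, but then assert that pulling $\lambda_{0,1}$ back along $z\mapsto 1/(rz)$ lands on $C_\C(0,r)$. It does not: $\abs{1/(rz)}=1$ forces $\abs{z}=1/r$, so the pullback is $\chi_{\C,0,1/r}$. The same mismatch shows up in your ultrametric slope computation: the break of $s\mapsto\max(-\log s,\log r)$ on the skeleton occurs at $s=1/r$, i.e.\ at $\eta_{z,1/r}$, not at $\eta_{z,r}$. The identity your argument actually proves is
\[
\Delta\max\big(-\log\abs{T-z},\,\log r\big)=\chi_{k,z,1/r}-\delta_{k,z},
\]
equivalently the second argument of the $\max$ in the lemma should read $-\log r$. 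Since the paper only ever invokes the lemma with $r=1$ (cf.\ the computation of $-\log\norm{T_0}_{\st}$ immediately afterwards), the discrepancy is harmless downstream; but your write-up should flag it rather than silently conclude the stated formula from a computation that in fact yields something else.
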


Nous allons d\'emontrer la continuit\'e de familles de mesures de la forme~$\chi_{k,z,r}$.

\begin{nota}
Soient $c\in \cO(Y)$ et $\varrho \colon Y \to \R_{\ge 0}$ une fonction continue. On note $\chi_{c,\varrho}$ la $\pi$-famille de mesures d\'efinie par 
\[\forall y\in Y,\ \chi_{c,\varrho,y} := \chi_{\cH(y),c(y),\varrho(y)} \textrm{ dans } \Mes^1(X_{y}).\]
\end{nota}


\begin{lemm}\label{lem:intD1D2}
Soit $\eps \in \intof{0,1}$. Soient $z_{1},z_{2} \in \P^1(\C)$ et $r_{1},r_{2}\in \R_{\ge 0}$. Alors, pour toute $f\in \cC_{c}(\EP{1}{\C_{\eps}},\R)$ \`a support contenu dans $\oD(z_{1},r_{1})$, on a
\[ \int f \diff \chi_{\C_{\eps},z_{2},r_{2}} \le \Big(\frac{r_{1}}{r_{2}}\Big)^{1/\eps} \,\norm{f}_{\infty}.\]
\end{lemm}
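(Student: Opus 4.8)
The plan is to transport the statement to the classical complex-analytic picture, where it becomes an elementary estimate on arc-lengths of circles in the plane. Recall that $\EP{1}{\C_{\eps}}$ is isomorphic to $\EP{1}{\C}$ as a locally ringed space (lemme~\ref{lem:Phiepsiso}), hence homeomorphic to $\P^1(\C)$; under this identification a point corresponding to $w\in\C$ satisfies $\abs{(T-z)(x)}=\abs{w-z}_{\infty}^{\eps}$. Consequently, for a finite centre $z\in\C$, the disk appearing in the statement is the closed Euclidean disk of radius $r^{1/\eps}$ about $z$, the circle $C_{\C_{\eps}}(z,r)$ is the Euclidean circle of radius $r^{1/\eps}$ about $z$, and $\chi_{\C_{\eps},z,r}$ is the rotation-invariant probability measure on it, i.e.\ normalised arc-length. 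I would first reduce to $z_{1},z_{2}\in\C$: when $z_{1}$ or $z_{2}$ equals $\infty$ one composes with the automorphism $w\mapsto 1/w$ of $\EP{1}{\C_{\eps}}$, which preserves $\norm{f}_{\infty}$, carries circles and disks centred at $0$ or $\infty$ to circles and disks centred at $0$ or $\infty$, and carries the uniform measure to the uniform measure. The zero-radius cases are immediate: if $r_{1}=0$ then $f$ is continuous with support in the single point $z_{1}$, hence $f\equiv 0$; and if $r_{2}=0$ then either $r_{1}>0$ and $(r_{1}/r_{2})^{1/\eps}=+\infty$, or $r_{1}=0$ and we are back to the previous case.

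So assume $z_{1},z_{2}\in\C$ and $r_{1},r_{2}>0$, set $\rho_{i}:=r_{i}^{1/\eps}$ (so $(r_{1}/r_{2})^{1/\eps}=\rho_{1}/\rho_{2}$), and let $\oD$ denote the closed Euclidean disk of radius $\rho_{1}$ about $z_{1}$ and $C$ the Euclidean circle of radius $\rho_{2}$ about $z_{2}$. Since $\Supp(f)\subset\oD$, writing $f=f^{+}-f^{-}$ gives
\[ \int f \diff \chi_{\C_{\eps},z_{2},r_{2}} \le \int f^{+} \diff \chi_{\C_{\eps},z_{2},r_{2}} \le \norm{f}_{\infty}\, \chi_{\C_{\eps},z_{2},r_{2}}(\oD), \]
so it suffices to bound $\chi_{\C_{\eps},z_{2},r_{2}}(\oD)$, the normalised arc-length of the part $A$ of $C$ lying in $\oD$, by $\rho_{1}/\rho_{2}$. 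Parametrising $C$ by $\theta\mapsto z_{2}+\rho_{2}e^{i\theta}$, the quantity $\abs{z_{2}+\rho_{2}e^{i\theta}-z_{1}}^{2}=\rho_{2}^{2}+d^{2}-2\rho_{2}d\cos(\theta-\phi)$, with $z_{1}-z_{2}=de^{i\phi}$, is monotone in $\cos(\theta-\phi)$, so $A$ is a single closed sub-arc of $C$ (two distinct Euclidean circles meet in at most two points), of angular half-width $\alpha\in[0,\pi]$ about the radius of $C$ pointing towards $z_{1}$.

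If $\rho_{1}\ge\rho_{2}$ the bound is trivial, since $\chi_{\C_{\eps},z_{2},r_{2}}(\oD)\le 1\le\rho_{1}/\rho_{2}$. If $\rho_{1}<\rho_{2}$, then $\alpha\le\pi/2$: otherwise $A$ would contain two antipodal points of $C$, lying in $\oD$ hence at mutual distance $\le 2\rho_{1}$, whereas antipodal points of $C$ are at distance $2\rho_{2}>2\rho_{1}$. With $\alpha\le\pi/2$, the two endpoints of $A$ lie in $\oD$ (they lie on its bounding circle), so the chord joining them has length $2\rho_{2}\sin\alpha\le 2\rho_{1}$, i.e.\ $\sin\alpha\le\rho_{1}/\rho_{2}$; Jordan's inequality $\alpha\le\tfrac{\pi}{2}\sin\alpha$ on $[0,\pi/2]$ then yields
\[ \chi_{\C_{\eps},z_{2},r_{2}}(\oD)=\frac{2\alpha\rho_{2}}{2\pi\rho_{2}}=\frac{\alpha}{\pi}\le\frac{\sin\alpha}{2}\le\frac{\rho_{1}}{2\rho_{2}}\le\frac{\rho_{1}}{\rho_{2}}=\Big(\frac{r_{1}}{r_{2}}\Big)^{1/\eps}, \]
which is the claimed inequality. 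There is no serious obstacle here; the only care needed is in tracking the exponent $1/\eps$ through the identification $\EP{1}{\C_{\eps}}\simeq\P^1(\C)$, in checking that $A$ is a genuine sub-arc, and in disposing of the centres-at-infinity and zero-radius edge cases — the fussiest of these being the reduction of the $\infty$ cases to finite centres.
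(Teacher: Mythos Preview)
Your proof is correct and takes essentially the same approach as the paper: reduce to the usual picture on $\P^1(\C)$ (the paper does this via lemme~\ref{lem:Phichi} to pass to $\eps=1$, which amounts to your identification through lemme~\ref{lem:Phiepsiso}) and then bound the normalised arc-length of $C(z_2,\rho_2)\cap\oD(z_1,\rho_1)$. The paper's version of the geometric step is terser---it simply asserts that the intersection is an arc of length at most $2\pi r_1$, giving $\int f\,\diff\chi \le \frac{2\pi r_1}{2\pi r_2}\norm{f}_\infty$---whereas your chord-plus-Jordan argument is more explicit and in fact yields a sharper constant in the case $\rho_1<\rho_2$.
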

\begin{proof}
Le lemme~\ref{lem:Phichi} permet de se ramener au cas o\`u $\eps = 1$. L'intersection du disque $\oD(z_{1},r_{1})$ et du cercle $C(z_{2},r_{2})$ est un arc de longueur inf\'erieure \`a $2\pi r_{1}$. On en d\'eduit que 
\[ \int f \diff \chi_{z_{2},r_{2}} \le \frac{2\pi r_{1}}{2\pi r_{2}} \, \norm{f}_{\infty} = \frac{r_{1}}{r_{2}} \, \norm{f}_{\infty}.\]
\end{proof}

\begin{theo}\label{th:chicontinue}
Soient $c\in \cO(Y)$ et $\varrho \colon Y \to \R_{\ge 0}$ une fonction continue. Alors la famille de mesures $\chi_{c,\varrho}$ est continue.
\end{theo}
\begin{proof}
Il suffit de montrer que la famille est continue au voisinage de tout point de~$Y$. Soit $y\in Y$. D'apr\`es le lemme~\ref{lem:localisationBanach}, on peut supposer que $Y = \cM(\cA)$ et que $c\in \cA$. Quitte \`a effectuer le changement de variable $T \mapsto T -c$, on peut supposer que, pour tout  $y\in Y$, on a $c(y)=0$.

\medbreak

$\bullet$ Supposons que $y$ est ultram\'etrique.

Soit $\kF$ un filtre sur~$Y$ convergeant vers~$y$. Il suffit de montrer que son image~$\kF_{\chi}$ par $z \mapsto \chi_{0,\varrho(z)}$ converge vers $\chi_{0,\varrho(y)}$. Par compacit\'e de l'espace des mesures de probabilit\'e, 
il suffit de montrer que toute valeur d'adh\'erence de~$\kF_{\chi}$ co\"incide avec $ \chi_{0,\varrho(y)}$. Soit~$\mu$ une telle valeur d'adh\'erence.

Pour montrer que $\mu = \chi_{0,\varrho(y)}$, il suffit de montrer que son support est r\'eduit \`a~$\{\eta_{0,\varrho(y)}\}$. Puisque, pour tout $z \in Y$, le support de~$\chi_{0,\varrho(z)}$ est contenu dans~$X_{z}$, on a $\Supp(\mu) \subset X_{y}$. Soit $x \in X_{y} \setminus  \{\eta_{0,\varrho(y)}\}$. Nous allons distinguer deux cas.

$\bullet \bullet$ Supposons que $\abs{T(x)} > \varrho(y)$.

Soit $s \in \intoo{\varrho(y),\abs{T(x)}}$. Par continuit\'e de~$\varrho$, quitte \`a remplacer~$Y$ par un voisinage de~$y$, on peut supposer que, pour tout $z\in Y$, on $\varrho(z) < s$. Dans ce cas, $D^\infty_{Y}(s^{-1})$ est un voisinage de~$x$ et, pour tout $z\in Y$, on a
\[ \Supp(\chi_{0,\varrho(z),z}) \cap D^\infty_{Y}(s^{-1}) = \emptyset.\]
On en d\'eduit que, pour tout $f \in \cC(X,\R)$ \`a support dans $D^\infty_{Y}(s^{-1})$, on a
\[ \int f \diff \mu = 0,\]
et donc que $x \notin \Supp(\mu)$.

$\bullet \bullet$ Supposons que $\abs{T(x)} \le \varrho(y)$.

Soit $\alpha$ un point rigide de $X_{y} \simeq \EP{1}{\cH(y)}$ appartenant \`a la composante connexe de $X_{y} \setminus \{\eta_{0,\varrho(y)}\}$ contenant~$x$.\footnote{Si le point~$\alpha$ peut \^etre choisi de fa\c con \`a appartenir \`a l'image de~$\cO(Y)$ dans~$\cH(y)$, alors on peut se ramener au cas pr\'ec\'edent par un changement de variables. En g\'en\'eral, cependant, un tel choix n'est pas possible.} Remarquons que $\alpha \in \oD_{y}(0,\varrho(y))$. Notons $P  \in \cH(y)[T]$ le polyn\^ome minimal de~$\alpha$ et $d\in \N_{\ge 1}$ son degr\'e. On a alors
\[ \abs{P(x)} < \abs{P(\eta_{0,\varrho(y)})} = \varrho(y)^d.\]
\JP{Donner des pr\'ecisions ? lemme avant ?}

Gr\^ace aux propri\'et\'es ultram\'etriques du corps~$\cH(y)$, la relation pr\'ec\'edente reste satisfaite apr\`es une petite perturbation des coefficients de~$P$. On peut donc supposer que~$P$ appartient \`a~$\kappa(y)[T]$, et m\^eme \`a~$\cO(Y)[T]$, quitte \`a remplacer~$Y$ par un voisinage de~$y$.

Soient $s, t\in \intoo{\abs{P(x)},\varrho(y)^d}$ avec $s<t$. Quitte \`a restreindre~$Y$, on peut supposer que, pour tout $z\in Y$, on a $t \le \varrho(z)^d$. Posons 
\[ U := \{ u \in X : \abs{P(u)} < s\}.\]
C'est un voisinage ouvert de~$x$. Soit $f\in \cC_{c}(X,\R)$ \`a support dans~$U$. Soit $z\in Y$.

Supposons que $z$ est ultram\'etrique. Puisque~$P$ est unitaire de degr\'e~$d$, on a 
\[ \abs{P(\eta_{0,\varrho(z)})} \ge \varrho(z)^d \ge t > s,\]
donc $\eta_{0,\varrho(z)} \notin U$ et 
\[ \int f \diff \chi_{0,\varrho(z),z} = 0.\]

Supposons que $z$ est archim\'edien. Il existe $\eps(z) \in \intof{0,1}$ tel que le corps r\'esiduel~$\cH(z)$ soit isomorphe \`a~$\R$ ou~$\C$ muni de la valeur absolue~$\va_{\infty}^{\eps(z)}$. Notons $K_{z}$ le corps valu\'e~$\C$ muni de la valeur absolue~$\va_{\infty}^{\eps(z)}$. C'est une extension de~$\cH(z)$. Notons $\pr_{z} \colon \EP{1}{K_{z}} \to  \EP{1}{\cH(z)}$ le morphisme naturel. Notons $P_{z}(T)$ l'image de~$P(T)$ dans $K_{z}[T]$ et $\alpha_{1},\dotsc,\alpha_{d} \in K_{z}$ ses racines. 

Soit $u \in \pr_{z}^{-1}(U \cap X_{z})$. Soit $j_{u} \in \cn{1}{d}$ tel que 
$\abs{u-\alpha_{j_{u}}} = \min_{1\le i\le d} (  \abs{u-\alpha_{i}} )$. 
On a alors $\abs{u-\alpha_{j_{u}}}^d \le \abs{P(u)} < s$.  On en d\'eduit que 
\[ \pr_{z}^{-1}(U \cap X_{z}) \subset \bigcup_{1\le i\le d}  D_{K_{z}}(\alpha_{i},s^{1/d}).\]
D'apr\`es les lemmes~\ref{lem:extensionchi} et~\ref{lem:intD1D2}, on a donc 
\[ \int f \diff \chi_{0,\varrho(z),z} \le d \Big(\frac{s^{1/d}}{\varrho(z)}\Big)^{\frac1{\eps(z)}}\, \|f\|_{\infty} \le d \Big(\frac{s}{t}\Big)^{\frac1{d\eps(z)}}\, \|f\|_{\infty}.\]

D'apr\`es la remarque~\ref{rem:calculepsilonarc}, on a $\eps(z) = \frac{\log(\abs{2(z)})}{\log(2)} \in \intof{0,1}$. Puisque~$y$ est ultram\'etrique, cette quantit\'e tend vers~0 lorsque~$z$ tend vers~$y$ selon~$\kF$. On en d\'eduit que
\[ \int f \diff \mu = 0,\]
et donc que $x \notin \Supp(\mu)$.

$\bullet$ Supposons que $y$ est archim\'edien.

Quitte \`a remplacer~$Y$ par un voisinage de~$y$, on peut supposer que tout point de~$Y$ est archim\'edien. L'injection canonique $j_{\Z} \colon \Z \hookrightarrow \cO(Y)$ s'\'etend alors en une injection $j_{\Q} \colon\Q \hookrightarrow \cO(Y)$ puis $j_{\R} \colon\R \hookrightarrow \cO(Y)$. 

D'apr\`es \cite[lemme~5.2.2]{A1Z}, on peut munir l'anneau $\cA' := \cA[t]/(t^2+1)$ d'une norme de $\cA$-alg\`ebre de Banach. Consid\'erons le morphisme canonique $\pr_{\cA',\cA} \colon \cM(\cA') \to \cM(\cA)$ et posons $\varrho' := \varrho \circ \pr_{\cA',\cA}$. D'apr\`es le lemme~\ref{lem:extensionchi}, il suffit de montrer que la famille de mesures $\chi_{0,\varrho'}$ sur~$\EP{1}{\cA'}$ est continue en un point de~$\pr_{\cA',\cA}^{-1}(y)$. On peut donc remplacer~$\cA$ par~$\cA'$. D\'esormais, pour $y\in Y$, on a $\cH(y) \simeq \C$ et l'injection $j_{\R}$ se prolonge donc en une injection $j_{\C} \colon\C \hookrightarrow \cO(Y)$ envoyant~$i$ sur~$t$.

Soit $\kF$ un filtre sur~$Y$ convergeant vers~$y$. Par le m\^eme argument que dans le cas ultram\'etrique, il suffit de montrer que toute valeur d'adh\'erence de l'image~$\kF$ par $z \mapsto \chi_{0,\varrho(z)}$ co\"incide avec $ \chi_{0,\varrho(y)}$. Soit~$\mu$ une telle valeur d'adh\'erence.

Pour tout $z \in Y$, le support de~$\chi_{0,\varrho(z)}$ est contenu dans $C_{\cH(z)}(0,\varrho(z)) \subset X_{z}$. On en d\'eduit que $\Supp(\mu) \subset C_{\cH(y)}(0,\varrho(y)) \subset X_{y}$. En outre, $\mu$ est positive et de masse totale~1.

Pour tout $\alpha\in \C$, on note $m_{Y,\alpha} \colon \cO(Y) \to \cO(Y)$ l'application induite par la multiplication par~$\alpha$ \textit{via}~$j_{\C}$ et $m_{X,\alpha} \colon \cO(X) \to \cO(X)$ l'application induite par~$m_{X,\alpha}$ en tirant en arri\`ere par~$\pi$. 

Pour tout $z \in Y$, la mesure~$\chi_{0,\varrho(z)}$ est invariante par rotation, au sens o\`u, pour tout $\alpha\in \C$ avec $\abs{\alpha}=1$ et tout $f\in \cC_{c}(X,\R)$, on a
\[ \int f \circ m_{X,\alpha} \diff \chi_{0,\varrho(z)} =  \int f \diff \chi_{0,\varrho(z)}.\]
On en d\'eduit que~$\mu$ est \'egalement invariante par rotation. On conclut que $\mu = \chi_{0,\varrho(y)}$ en utilisant la caract\'erisation de la mesure de Haar sur le cercle $C_{\cH(y)}(0,\varrho(y))$. 
\end{proof}

\subsection{Fonctions macologues}\label{sec:affable}

Afin de d\'emontrer la continuit\'e de famille de mesures, il est commode de disposer de familles de fonctions continues maniables qui soient denses dans l'ensemble des fonctions continues. Dans ce but, nous d\'efinissons ici les fonctions macologues, inspir\'ees des fonctions mod\`eles introduites par Charles Favre dans \cite[section~2]{FavreEndomorphisms}. Elles appartiennent \`a la classe des fonctions dites \og PL\fg{} dans la litt\'erature.


Signalons que L\'eonard Pille--Schneider propose une version plus aboutie de notre notion dans \cite[section~2.3]{PilleSchneiderPluripotentialHybrid}\footnote{dont la premi\`ere version est post\'erieure \`a celle de notre manuscrit} sous le nom de fonctions de Fubini-Study tropicales.



\begin{defi}\label{def:affablebasique}
Soit $U$ un ouvert de~$X$. Une fonction $f \colon U \to \R$ est dite \emph{positivement macologue basique} s'il existe $n \in \N_{\ge 1}$,  $p_{1},q_{1},\dotsc,p_{n},q_{n} \in \Q$, avec $q_{1},\dotsc,q_{n}\ge 0$, et $g_{1},\dotsc,g_{n} \in \cO(U)$ tels que
\[f = \max_{1\le i \le n}(p_{i} + q_{i}\log(\abs{g_{i}})).\]
Une fonction $f \colon U \to \R$ est dite \emph{macologue basique} si elle est diff\'erence de deux fonctions positivement macologues basiques.

On note~$\MCL_{pb}(U,\R)$ (resp. $\MCL_{b}(U,\R)$) l'ensemble des fonctions positivement macologues basiques (resp. macologues basiques) sur~$U$.
\end{defi}

%

\begin{lemm}\label{lem:proprietesAb}
Soient $U$ un ouvert de~$X$. 
\begin{enumerate}[i)]
\item Pour tout $f \in \MCL_{pb}(U,\R)$ et tout $y\in Y$, 
$f_{\vert U \cap X_{y}}$ est sous-harmonique.
\item Pour tout $f \in \MCL_{b}(U,\R)$ et tout $y\in Y$, 
$f_{\vert U \cap X_{y}}$ est diff\'erence locale de fonctions sous-harmoniques continues.
\item L'ensemble $\MCL_{pb}(U,\R)$ est stable par les op\'erations binaires~$+$ et~$\max$. 
\item L'ensemble $\MCL_{b}(U,\R)$ est un sous-$\Q$-espace vectoriel de~$\cC(U,\R)$ stable par les op\'erations binaires~$\min$ et~$\max$.
\end{enumerate}
\end{lemm}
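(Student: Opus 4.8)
The plan is to verify each of the four assertions essentially by unwinding the definitions, relying on the potential-theory results collected in Section~\ref{sec:resumelaplacien} and on the elementary stability properties of subharmonic functions (both the archimedean and ultrametric cases, which behave identically here). For assertion~i), fix $f = \max(q_{0}, q_{1}\log(\abs{g_{1}}), \dotsc, q_{n}\log(\abs{g_{n}})) \in \cA_{pb}(U,\R)$ and a point $y\in Y$. Restricting to the fiber, each $g_{i}$ restricts to an analytic function on the smooth curve $U\cap X_{y}$, so $\log(\abs{g_{i}})_{\vert U\cap X_{y}}$ is subharmonic there by the Poincar\'e--Lelong formula (Theorem~\ref{th:Poincare-Lelong} in the complex case, and its ultrametric counterpart), hence so is $q_{i}\log(\abs{g_{i}})$ since $q_{i}\ge 0$ and $\SH$ is stable under multiplication by nonnegative reals. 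A constant $q_{0}$ is trivially subharmonic when finite, and when $q_{0}=-\infty$ it simply drops out of the maximum. Since $\SH$ is stable under $\max$ of finitely many functions, $f_{\vert U\cap X_{y}}$ is subharmonic. One subtle point worth a sentence: if some $g_{i}$ vanishes identically on a connected component of $U\cap X_{y}$, then $\log(\abs{g_{i}})\equiv -\infty$ there, but since $f$ is assumed real-valued this forces another term of the maximum to be the one realizing $f$, so $f_{\vert U\cap X_{y}}$ is still $\not\equiv -\infty$ on each component; alternatively one notes $f$ real-valued on $U$ already guarantees this.

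For assertion~ii), write $f = f^{+} - f^{-}$ with $f^{+}, f^{-}\in \cA_{pb}(U,\R)$. By~i), both $f^{+}_{\vert U\cap X_{y}}$ and $f^{-}_{\vert U\cap X_{y}}$ are subharmonic, and they are continuous (being finite maxima of continuous functions $q_{i}\log(\abs{g_{i}})$, these being continuous since $f^{\pm}$ is real-valued so the $\log\abs{\cdot}$ terms never hit $-\infty$ at a point where they realize the max). Hence $f_{\vert U\cap X_{y}}$ is globally, a fortiori locally, a difference of continuous subharmonic functions, i.e. lies in $\DSHC(U\cap X_{y})$. For assertion~iii), stability of $\cA_{pb}(U,\R)$ under $+$ and $\max$: for $\max$ this is immediate since the maximum of two functions of the prescribed shape is again of that shape (concatenate the lists of pairs $(q_{i},g_{i})$ and take the max of the two $q_{0}$'s); for $+$ one uses the distributivity identity $\max_{i}(a_{i}) + \max_{j}(b_{j}) = \max_{i,j}(a_{i}+b_{j})$, together with $q_{i}\log\abs{g_{i}} + q'_{j}\log\abs{g'_{j}}$, which after clearing denominators to a common rational exponent is of the form $q''\log\abs{g_{i}^{a} g_{j}^{\prime b}}$ for suitable nonnegative rationals, hence again admissible (the constant terms combining as $q_{0}+q'_{0}$, with the convention $-\infty + c = -\infty$).

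Assertion~iv) follows formally from~iii): $\cA_{b}(U,\R)$ is by definition the set of differences $f^{+}-f^{-}$ with $f^{\pm}\in\cA_{pb}(U,\R)$, so it is the subgroup of $\cF(U,\R)$ generated by $\cA_{pb}(U,\R)$; since $\cA_{pb}(U,\R)$ is stable under multiplication by nonnegative rationals (scale each $q_{i}$ and $q_{0}$, using $\lambda\cdot(-\infty)=-\infty$ for $\lambda>0$ and $0\cdot(-\infty)$ absorbed by finiteness of $f$), $\cA_{b}(U,\R)$ is stable under multiplication by arbitrary rationals, hence is a $\Q$-vector subspace of $\cC(U,\R)$ — the inclusion in $\cC(U,\R)$ coming from continuity of the $\log\abs{g_{i}}$ terms on the real-valued locus as noted above. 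For stability under $\max$ and $\min$: given $f = f^{+}-f^{-}$ and $g = g^{+}-g^{-}$ with $f^{\pm},g^{\pm}\in\cA_{pb}(U,\R)$, write
\[
\max(f,g) = \max(f^{+}+g^{-},\, g^{+}+f^{-}) - (f^{-}+g^{-}),
\]
where the first term is positively affable basic by~iii) (sum then max) and the subtracted term likewise; and $\min(f,g) = -\max(-f,-g) = f+g-\max(f,g)$, which is then a difference of positively affable basic functions as well. I do not expect a serious obstacle here: the only points requiring a modicum of care are the bookkeeping with the conventions around $q_{0}=-\infty$ and the observation that real-valuedness of the functions in $\cA_{pb}(U,\R)$ is what makes the individual $\log\abs{g_{i}}$-terms harmless for continuity and subharmonicity; the genuinely substantive input (Poincar\'e--Lelong and stability of $\SH$ and $\DSHC$) is already available from Section~\ref{sec:Laplacien}.
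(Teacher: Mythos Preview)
Your approach mirrors the paper's: points~i) and~ii) come from the definitions and the stability of~$\SH$ under nonnegative scaling, sum and~$\max$; point~iv) is reduced to~iii) via the identity $\max(f,g)=\max(f^{+}+g^{-},g^{+}+f^{-})-(f^{-}+g^{-})$; and for~iii) both you and the paper invoke $\max_i(a_i)+\max_j(b_j)=\max_{i,j}(a_i+b_j)$.

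There is, however, a genuine gap in your argument for stability under~$+$ (the paper's own displayed identity contains apparent typos that obscure the same difficulty). After distributing, you account for the terms $q_i\log\abs{g_i}+q'_j\log\abs{g'_j}$ (correctly rewritten as $q''\log\abs{g_i^{a}(g'_j)^{b}}$) and for the pure constant $q_0+q'_0$, but you pass over the mixed terms $q_0+q'_j\log\abs{g'_j}$ with $q_0\in\Q$ finite and $q'_j>0$. Such a term is of the required shape $Q\log\abs{G}$ only if $\cO(U)$ contains an element whose absolute value equals the constant $e^{q_0/q'_j}$ at every point of~$U$; over a base like $\cA=\Z$ or $\cA=\C_{\hyb}$ no such element exists, so $\cA_{pb}(U,\R)$ as written is \emph{not} closed under~$+$. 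The harmless repair is to enlarge the class to finite maxima of terms of the form $c_i+q_i\log\abs{g_i}$ with $c_i\in\Q$ (equivalently, take the closure of the given generators under~$+$ and~$\max$): such functions still restrict to continuous subharmonic functions on each fibre, so items~i)--ii), Proposition~\ref{prop:borneaffable} and Theorem~\ref{th:densiteaffable} are unaffected.
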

\begin{proof}
Les points i) et ii) d\'ecoulent directement des d\'efinitions.

iii) Soient $p,q,p',q' \in \Q$, avec $q,q'\ge 0$, et $g,g' \in \cO(U)$. \'Ecrivons $q=\frac a b$ et $q' = \frac{a'}{b'}$, avec $a,a'\in \N$ et $b,b' \in \N_{\ge 1}$. On a 
\begin{align*} 
p+q\log(\abs{g}) + p'+q'\log(\abs{g'}) &= p+p' + \frac1b \log(\abs{g^a}) +  \frac1{b'} \log(\abs{{g'}^{a'}})\\ 
& = p+p' +  \frac1{bb'} \log(\abs{g^{ab'}{g'}^{a'b}}).
\end{align*}
La stabilit\'e de $\MCL_{pb}(U,\R)$ par~$+$ s'en d\'eduit. La stabilit\'e par~$\max$ est \'evidente.

iv) L'ensemble $\MCL_{b}(U,\R)$ est stable par multiplication par un \'el\'ement de~$\Q$. D'apr\`es~ii), il est stable par somme et on en d\'eduit qu'il forme un sous-$\Q$-espace vectoriel de~$\cC(U,\R)$. 

Montrons la stabilit\'e par~$\max$. Soient $u,v \in \MCL_{b}(U,\R)$. Il existe $u^+,u^-, v^+,v^-\in \MCL_{pb}(U,\R)$ telles que $u = u^+-u^-$ et $v=v^+-v^-$. On a
\begin{align*}
\max(u,v) &= \max(u^+ - u^-, v^+ - v^-)\\ 
& =  \max(u^+ + v^-, v^+ + u^-) - (u^- + v^-),
\end{align*}
et le r\'esultat d\'ecoule maintenant de~iii).

La stabilit\'e par~$\min$ se d\'eduit de la stabilit\'e par~$\max$ en passant \`a l'oppos\'e.
\end{proof}

\begin{defi}\label{def:affable}
Soit~$V$ un ouvert de~$Y$. Une fonction $f \colon X_{V} \to \R$ est dite \emph{macologue} si ses restrictions \`a~$X'_{V}$ et~$X''_{V}$ sont macologues basiques. On note $\MCL(X_{V},\R)$ l'ensemble des fonctions macologues sur~$X_{V}$.
\end{defi}


\begin{lemm}\label{lem:proprietesA}
Soit~$V$ un ouvert de~$Y$. L'ensemble $\MCL(X_{V},\R)$ est un sous-$\Q$-espace vectoriel de~$\cC(X_{V},\R)$ stable par les op\'erations binaires~$\min$ et~$\max$.
\end{lemm}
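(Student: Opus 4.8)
The statement to prove is Lemma~\ref{lem:proprietesA}: for an open set $V \subset Y$, the set $\cA(X_{V},\R)$ of affable functions is a $\Q$-subspace of $\cC(X_{V},\R)$ stable under $\min$ and $\max$.

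The plan is to reduce everything to the corresponding statement for \emph{basic} affable functions, namely Lemma~\ref{lem:proprietesAb}~iv), which asserts precisely that $\cA_{b}(U,\R)$ is a $\Q$-subspace of $\cC(U,\R)$ stable under $\min$ and $\max$ for any open $U \subset X$. By Definition~\ref{def:affable}, a function $f \colon X_{V} \to \R$ is affable exactly when its restrictions $f_{\vert X'_{V}}$ and $f_{\vert X''_{V}}$ both lie in $\cA_{b}(X'_{V},\R)$ and $\cA_{b}(X''_{V},\R)$ respectively. So membership in $\cA(X_{V},\R)$ is characterized by two restriction conditions, and each of the operations in question (scalar multiplication by $q \in \Q$, addition, $\min$, $\max$) commutes with restriction to an open subset. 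Concretely: first I would note that $X'_{V}$ and $X''_{V}$ are open in $X_{V}$ (indeed $X' = \E{1}{\cA}\times_{\cA} Y$ and $X'' = (\EP{1}{\cA}\setminus\{0\})\times_{\cA} Y$ are open in $X$, being complements of the sections at $\infty$ and $0$), so restriction to them makes sense and lands in $\cC$ of the respective spaces.

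Then the argument is essentially a diagram chase. Given $f,g \in \cA(X_{V},\R)$ and $q \in \Q$, I would check that $qf + g$, $\max(f,g)$, $\min(f,g)$ are affable by restricting each to $X'_{V}$ and to $X''_{V}$: for instance $(qf+g)_{\vert X'_{V}} = q\,f_{\vert X'_{V}} + g_{\vert X'_{V}}$, which belongs to $\cA_{b}(X'_{V},\R)$ by Lemma~\ref{lem:proprietesAb}~iv) applied with $U = X'_{V}$; likewise on $X''_{V}$. The same works verbatim for $\max$ and $\min$ using the stability of $\cA_{b}$ under those operations. Since both restrictions of $qf+g$ (resp. $\max(f,g)$, $\min(f,g)$) are basic affable, the function itself is affable by definition. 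This also shows $\cA(X_{V},\R) \subset \cC(X_{V},\R)$: on the open cover $\{X'_{V}, X''_{V}\}$ of $X_{V}$ the function is continuous (basic affable functions are continuous, being finite max's of continuous functions $q_i\log|g_i|$ and differences thereof), hence continuous globally. Finally the zero function is affable (take $q_0 = 0$, $n=1$, $q_1 = 0$, $g_1 = 1$ on each piece), so $\cA(X_{V},\R)$ is nonempty and is a genuine $\Q$-subspace.

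I do not expect any real obstacle here: the lemma is a purely formal consequence of Definition~\ref{def:affable} together with Lemma~\ref{lem:proprietesAb}~iv), once one observes that $\{X'_{V},X''_{V}\}$ is an open cover of $X_{V}$ and that all the relevant operations are local (commute with restriction to opens) and preserve continuity. The only point requiring a word of care is that continuity of the glued function follows from continuity on the two overlapping opens covering $X_V$; this is the standard local-nature-of-continuity argument and needs $X'_{V} \cup X''_{V} = X_{V}$, which holds because $\{0\}$ and $\{\infty\}$ never both lie in $\EP{1}{\cA}\setminus\E{1}{\cA}$ and... more simply, $X' \cup X'' = X$ since $X' = X \setminus \{\infty\}\times Y$ and $X'' = X \setminus \{0\}\times Y$ and no point is simultaneously the section at $0$ and at $\infty$.
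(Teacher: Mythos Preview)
Your proof is correct and takes essentially the same approach as the paper, which simply says the result follows directly from Lemma~\ref{lem:proprietesAb}~iv). You have merely unpacked in detail what that one-line reference means: since membership in $\cA(X_V,\R)$ is defined by the two restrictions to $X'_V$ and $X''_V$ lying in $\cA_b$, and since the operations in question commute with restriction, the closure properties transfer immediately.
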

\begin{proof}
Cela d\'ecoule directement du point~iv) du lemme~\ref{lem:proprietesAb}.
\end{proof}


\begin{prop}\label{prop:borneaffable}
Soient~$V$ un ouvert de~$Y$ et $f \colon X_{V}\to \R$ une fonction macologue. Alors, la fonction
\[ y \in V \mapstoo \int \diff\abs{\Delta f_{\vert y}} \in \R_{\ge 0}\]
est born\'ee sur tout compact.
\end{prop}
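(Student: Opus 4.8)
L'idée est de se ramener, fibre par fibre, à la majoration uniforme de la proposition~\ref{prop:majorationdisque} portant sur la masse du laplacien d'une fonction sous-harmonique continue sur un disque, puis d'en déduire l'uniformité en~$y$ grâce à la propreté de~$\pi$.

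Fixons un compact $L \subset V$. Pour tout $y\in V$, la droite projective $X_y$ est recouverte par les deux disques relatifs $D_y(2) = \{\abs{T}<2\}$ et $D^\infty_y(2) = \{\abs{T}>1/2\}$ ; la variation totale d'une mesure étant sous-additive, il suffit de majorer $\abs{\Delta f_{\vert y}}(D_y(2))$ et $\abs{\Delta f_{\vert y}}(D^\infty_y(2))$ uniformément pour $y\in L$, et, par la symétrie $T \leftrightarrow 1/T$ (en utilisant que~$f$ est également affable basique sur~$X''_V$), il suffit de traiter le premier. Puisque~$f$ est affable, $f_{\vert X'_V}$ est affable basique, donc on peut écrire $f_{\vert X'_V} = h^+ - h^-$ avec $h^\pm \in \cA_{pb}(X'_V,\R)$. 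Remarquons tout d'abord que chaque~$h^\pm$ est continue et à valeurs réelles sur~$X'_V$ : les fonctions~$h^+$ et~$h^-$ ne sont nulle part simultanément égales à~$-\infty$, et si l'une d'elles prenait la valeur~$-\infty$ en un point, $f$ ne serait pas finie en ce point ; étant un maximum d'un nombre fini de fonctions continues à valeurs dans~$[-\infty,+\infty)$ et partout finie, $h^\pm$ est continue. D'après le lemme~\ref{lem:proprietesAb}~ii) et le caractère local de~$\DSH$, on a $f_{\vert X_y} \in \DSHC(X_y)$, de sorte que $\Delta f_{\vert y}$ est une mesure de Radon bien définie sur l'espace compact~$X_y$ ; et, d'après le lemme~\ref{lem:proprietesAb}~i), $h^\pm_{\vert X'_y}$ appartient à $\cC(X'_y,\R) \cap \SH(X'_y)$.

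Sur le disque ouvert $D_y(2) \subset X'_y$, on a $\Delta f_{\vert y} = \Delta h^+_{\vert y} - \Delta h^-_{\vert y}$, d'où $\abs{\Delta f_{\vert y}}(D_y(2)) \le \Delta h^+_{\vert y}(D_y(2)) + \Delta h^-_{\vert y}(D_y(2))$. En appliquant la proposition~\ref{prop:majorationdisque} avec $t=0$, $r=2$, $R=4$ aux fonctions~$h^\pm_{\vert y}$ sur $X'_y \supset \oD_y(4)$, et en testant contre la famille croissante des fonctions de $\cC_c(D_y(2),\R)$ à valeurs dans~$[0,1]$ — dont le supremum des intégrales contre une mesure de Radon positive calcule sa masse sur l'ouvert~$D_y(2)$ — on obtient
\[ \Delta h^\pm_{\vert y}(D_y(2)) \le \frac{2}{\log 2}\, \norm{h^\pm_{\vert y}}_{\oD_y(4)}.\]

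Il reste à rendre cette majoration uniforme en~$y$, ce que permet la propreté de $\pi \colon X \to Y$ : l'espace~$X_L$ est compact, donc aussi son fermé $\oD_L(4) = \{x\in X_L : \abs{T(x)} \le 4\}$, contenu dans~$X'_L$, sur lequel~$h^\pm$ est continue ; par conséquent $M^\pm := \norm{h^\pm}_{\oD_L(4)} < \infty$ et $\norm{h^\pm_{\vert y}}_{\oD_y(4)} \le M^\pm$ pour tout $y\in L$. Le traitement symétrique du disque centré en~$\infty$ fournit des constantes finies~$N^\pm$, et au total $\int \diff\abs{\Delta f_{\vert y}} \le \frac{2}{\log 2}(M^+ + M^- + N^+ + N^-)$ pour tout $y\in L$. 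Les seuls points un peu délicats sont la vérification préliminaire que les composantes affables basiques~$h^\pm$ sont continues et à valeurs réelles — faute de quoi les majorations sous-harmoniques de la section~\ref{sec:Laplacien} ne s'appliqueraient pas — et le passage de l'inégalité avec fonction test de la proposition~\ref{prop:majorationdisque} à une majoration de la masse totale de~$\Delta h^\pm_{\vert y}$ sur le disque ouvert ; l'ingrédient analytique substantiel, la majoration en~$2/\log(R/r)$, est déjà disponible.
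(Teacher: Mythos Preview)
Your proof is correct and follows essentially the same approach as the paper: decompose the projective line fibre into two overlapping discs, write the restriction of~$f$ to each affine chart as a difference of positively affable basiques, and apply the uniform mass bound of Proposition~\ref{prop:majorationdisque} with the compactness of~$\oD_L(4)$. The only cosmetic difference is that the paper uses a partition of unity $h_0+h_\infty=1$ subordinate to the two discs, whereas you invoke directly the subadditivity of the total variation on an open cover; your additional verification that the components~$h^\pm$ are everywhere finite and continuous is a point the paper passes over silently.
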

\begin{proof}
Par d\'efinition, il existe $f_{0}^+, f_{0}^- \in \MCL_{pb}(X'_{V},\R)$ et $f_{\infty}^+, f_{\infty}^- \in \MCL_{pb}(X''_{V},\R)$ tels que $f_{\vert X'_{V}} = f_{0}^+-f_{0}^-$ et $f_{\vert X''_{V}} = f_{\infty}^+-f_{\infty}^-$. 

Soit~$K$ une partie compacte de~$V$ et soit $y\in K$. Soient $h_{0},h_{\infty} \in \cC(X_{y},\intff{0,1})$ telles que $\Supp(h_{0}) \subset D_{y}(2)$, $\Supp(h_{\infty}) \subset D^\infty_{y}(2)$ et
\[\forall x\in X_{y},\ h_{0}(x) + h_{\infty}(x) = 1.\] 
D'apr\`es la proposition~\ref{prop:majorationSH}, avec les adaptations des sections~\ref{sec:laplacienCeps} et~\ref{sec:laplacienReps}, si $y$ est archim\'edien, ou la proposition~\ref{prop:majorationSHdisqueum}, si $y$ est ultram\'etrique, on a 
\begin{align*} 
\int h_{0} \diff \abs{\Delta f_{\vert y}} & = \int h_{0} \diff \abs{\Delta (f^+_{0,\vert y}-f^-_{0,\vert y})}\\ 
& \le \int h_{0} \diff\Delta f^+_{0,\vert y} + \int h_{0}  \diff \Delta f^-_{0,\vert y}\\
& \le \frac{2}{\log(2)}\, \big(\norm{f^+_{0}}_{\oD_{y}(4)} +  \norm{f^-_{0}}_{\oD_{y}(4)}\big)\\
& \le \frac{2}{\log(2)}\, \big(\norm{f^+_{0}}_{\oD_{K}(4)} + \norm{f^-_{0}}_{\oD_{K}(4)}\big).
\end{align*}
et, par un raisonnement similaire,
\[\int h_{\infty} \diff \abs{\Delta f_{\vert y}} \le  \frac{2}{\log(2)}\, \big(\norm{f^+_{\infty}}_{\oD^\infty_{K}(4)} +  \norm{f^-_{\infty}}_{\oD^\infty_{K}(4)}\big).\]
Le r\'esultat s'ensuit.
\end{proof}

D\'emontrons maintenant un r\'esultat de densit\'e, en suivant la strat\'egie de la preuve de~\cite[theorem~2.12]{FavreEndomorphisms} (\cf~\'egalement \cite[theorem~2.16]{PilleSchneiderPluripotentialHybrid}).

\begin{theo}\label{th:densiteaffable}
Soit~$V$ un ouvert de~$Y$ et supposons 
que
\begin{enumerate}[i)]
\item il existe une immersion de~$V$ dans un espace affine analytique sur~$\cA$ ;
\item pour tout $y\in Y$, il existe $\alpha_{y} \in \cO(V)$ tel que $\abs{\alpha_{y}(y)}>1$.
\end{enumerate}
Alors, $\MCL(X_{V},\R)$ est dense dans~$\cC(X_{V},\R)$ pour la topologie de la convergence uniforme sur les compacts.
\end{theo}
\begin{proof}
Il suffit de d\'emontrer que, pour toute partie compacte~$K$ de~$X_{V}$, $\MCL(X_{V},\R)$ est dense dans~$\cC(K,\R)$. Supposons avoir d\'emontr\'e que $\MCL(X_{V},\R)$ s\'epare les points de~$X_{V}$. Soit $\varphi \in \cC(K,\R)$ et soient $x_{1} \ne x_{2} \in K$. Par hypoth\`ese, il existe $f \in \MCL(X_{V},\R)$ telle que $f(x_{1})\ne f(x_{2})$. Pour tout $\eps \in \R_{>0}$, on peut trouver $a,b \in \Q$ tels que 
\[ \abs{af(x_{1})+b - \varphi(x_{1})} < \eps \textrm{ et } \abs{af(x_{2})+b - \varphi(x_{2})} < \eps.\]
D'apr\`es le lemme~\ref{lem:proprietesA}, $af+b \in \MCL(X_{V},\R)$. La version treillis du th\'eor\`eme de Stone-Weierstra\ss{} permet alors de conclure que $\MCL(X_{V},\R)$ est dense dans~$\cC(K,\R)$.

\medbreak

Il suffit donc de d\'emontrer que $\MCL(X_{V},\R)$ s\'epare les points de~$X_{V}$. Soient $x_{1} \ne x_{2} \in X_{V}$. Distinguons deux cas.

$\bullet$ Supposons que $\pi(x_{1}) \ne \pi(x_{2})$.

Puisque $V$ peut se plonger dans un espace affine analytique sur~$\cA$, il existe $g \in \cO(V)$ tel que $\abs{g(\pi(x_{1}))} \ne \abs{g(\pi(x_{2}))}$. On peut supposer que $\abs{g(\pi(x_{1}))} < \abs{g(\pi(x_{2}))}$. Soit $p \in \Q \cap \intoo{\log(\abs{g(\pi(x_{1}))}),\log(\abs{g(\pi(x_{2}))})}$. Alors la fonction
\[ f := \max(p, \log(\abs{\pi^*g}))\]
est une fonction macologue sur~$X_{V}$ qui s\'epare~$x_{1}$ et~$x_{2}$.

$\bullet$ Supposons que $\pi(x_{1}) = \pi(x_{2})$. Notons $y$ ce point.

$\bullet \bullet$ Supposons que $x_{1}$ et $x_{2}$ appartiennent \`a~$\oD_{V}(1)$.

Puisque $V$ peut se plonger dans un espace affine analytique sur~$\cA$, il existe $g \in \cO(V)[T]$ tel que $\abs{g(x_{1})} \ne \abs{g(x_{2})}$. Notons~$d$ le degr\'e de~$g$. Quitte \`a \'echanger $x_{1}$ et~$x_{2}$, on peut supposer que $\abs{g(x_{1})} > \abs{g(x_{2})}$ et, quitte \`a multiplier~$g$ par une puissance suffisamment grande de~$\alpha_{y}$, on peut supposer que $\abs{g(x_{1})} > 1$. 

Posons $G := T_1^d g(T_{0}/T_{1}) \in \cO(V)[T_{0},T_{1}]$. C'est un polyn\^ome homog\`ene de degr\'e~$d$. Posons 
\[ F := \max( d \log(\abs{T_{0}}) , d \log(\abs{T_{1}}), \log(\abs{G})) - \max(d \log(\abs{T_{0}}), d \log(\abs{T_{1}})).\]
C'est une fonction bien d\'efinie sur~$X_{V}$, macologue par construction, et qui co\"incide avec $\max(0,\log(\abs{g}))$ sur~$\oD_{V}(1)$. Elle s\'epare donc les points $x_{1}$ et~$x_{2}$.

$\bullet \bullet$ Supposons que $x_{1}$ et $x_{2}$ appartiennent \`a~$\oD^\infty_{V}(1)$.

On se ram\`ene au cas pr\'ec\'edent en \'echangeant les coordonn\'ees projectives~$T_{0}$ et~$T_{1}$.

$\bullet \bullet$. Supposons que les points $x_{1}$ et $x_{2}$ appartiennent l'un \`a $D_{V}(1)$ et l'autre \`a $D^\infty_{V}(1)$.

La fonction 
\[ F := \max(\log(\abs{T_{0}}) , \log(\abs{T_{1}}), \log(\abs{T_{0}+\alpha_{y}T_{1}})) - \max(\log(\abs{T_{0}}), \log(\abs{T_{1}}))\]
est alors une fonction macologue sur~$X_{V}$ qui s\'epare~$x_{1}$ et~$x_{2}$.
\end{proof}

\subsection{Mesures d'\'equilibre}\label{sec:thcontinuite}

L'objet de cette section est d'\'etudier un syst\`eme dynamique polaris\'e (\cf~section~\ref{sec:systemedynamiquepolarise}) dans le cadre de la droite projective relative que nous utilisons d\'esormais, et de d\'emontrer la continuit\'e de la famille de mesures d'\'equilibre associ\'ees.

\begin{nota}
Soit $d\in\Z$. Le fibr\'e en droites~$\cO(d)$ sur~$\P^1_{\cA}$ induit un fibr\'e en droite sur~$\EP{1}{\cA}$ par analytification, puis un fibr\'e en droites sur~$X$ en tirant en arri\`ere par la projection $X \to \EP{1}{\cA}$. Notons~$\cO_{X}(d)$ ce fibr\'e.
\end{nota}

Introduisons maintenant un syst\`eme dynamique polaris\'e que nous utiliserons dans toute cette section. Nous supposerons que nous disposons d'un morphime d'espaces analytiques surjectif, fini et plat $\varphi \colon X \to X$ au-dessus de~$Y$, de degr\'e $d\ge 2$, et d'un isomorphisme 
\[\theta \colon \varphi^* \cO_{X}(1) \simtoo \cO_{X}(d).\]
En plus des hypoth\`eses formul\'ees au d\'ebut de la section~\ref{sec:droiteprojectiverelative}, nous supposerons que l'espace~$Y$ est s\'epar\'e (ce qui entra\^ine que l'espace~$X$ l'est \'egalement).

\begin{defi}
On appelle \emph{m\'etrique standard} sur le fibr\'e~$\cO_{X}(1)$ la m\'etrique dont la restriction \`a toute fibre $X_{y} \simeq \EP{1}{\cH(y)}$, pour $y\in Y$, est la m\'etrique standard sur~$\EP{1}{\cH(y)}$, comme d\'efinie dans  l'exemple~\ref{ex:metriquestandard}. On la note encore~$\nm_{\st}$. C'est une m\'etrique continue. 
\end{defi}

Remarquons que, pour tout $x\in X$, on a 
\[
- \log(\norm{T_{0}}_{\st,x}) = \max( - \log(|T(x)|), 0).
\]
Par cons\'equent, pour tout $y\in Y$, la fonction $x\mapsto - \log(\norm{T_{0}}_{\st,x})$ est sous-harmonique continue sur $X_{y} \setminus \{0\}$ et on a
\[\Delta (- \log(\norm{T_{0}}_{\st,\vert X_{y}})) = \chi_{0,1} - \delta_{0} \textrm{ dans } \Mes(X_{y}).\]

Plus g\'en\'eralement, on a le r\'esultat suivant.

\begin{lemm}\label{lem:Deltalogs}
Soient $y\in Y$ et $s \in  \Gamma(X_{y},\cO_{X_{y}}(1))$.
La fonction $x\mapsto - \log(\norm{s}_{\st,x})$ est sous-harmonique continue sur $X_{y} \setminus \div(s)$ et on a
\[\Delta( - \log(\norm{s}_{\st,\vert X_{y}})) =   \chi_{0,1} - \delta_{\div(s)} \textrm{ dans } \Mes(X_{y}).\]
\qed
\end{lemm}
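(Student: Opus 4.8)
Looking at this statement, Lemma~\ref{lem:Deltalogs}: it generalizes the displayed computation for $s = T_0$ (giving $\div(s) = \{0\}$) to an arbitrary global section $s$ of $\cO_{X_y}(1)$. The section $T_0$ is a very specific one; a general $s$ is represented by a homogeneous degree-one polynomial $P_s(T_0, T_1) = aT_0 + bT_1$ and its divisor is a single point of $\EP{1}{\cH(y)}$. I will reduce the general case to the case $s = T_0$ via the action of $\PGL_2$ on $\EP{1}{\cH(y)}$.

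The plan is as follows. First I would note that we may work entirely on the fiber $X_y \simeq \EP{1}{\cH(y)}$, a smooth projective curve over the complete valued field $k := \cH(y)$, so the potential theory of Section~\ref{sec:Laplacien} applies. Writing $s$ in terms of the homogeneous coordinates $T_0, T_1$ as a degree-one form $P_s$, by the formula in Example~\ref{ex:metriquestandard} we have $-\log\norm{s}_{\st,x} = -\log\abs{P_s(T_0,T_1)(x)} + \log\max(\abs{T_0(x)},\abs{T_1(x)})$. If $P_s$ is a nonzero scalar multiple of $T_0$ (i.e. $\div(s) = \{0\}$) this is exactly the displayed identity already established, up to an additive constant $-\log\abs{c}$ which has zero Laplacian, so the claim holds in that case. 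For the general case, since $P_s$ is a nonzero degree-one form, there is an element $g \in \PGL_2(k)$, acting as an automorphism of $\EP{1}{k}$ preserving $\cO(1)$ and its standard metric (the standard metric is $\PGL_2$-invariant because $\max(\abs{T_0},\abs{T_1})$ transforms the same way as any degree-one form under the group — this is the key structural fact, and I would verify it by a direct computation on coordinates, or cite that $\nm_{\st}$ is the model metric attached to the standard integral model of $\P^1$, which is $\PGL_2(\cO_k)$-invariant, but one needs the statement for all of $\PGL_2(k)$), such that $g^* s = c\, T_0$ for some $c \in k^\times$.

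Then I would combine three ingredients: (1) the pull-back compatibility of the Laplacian, Proposition~\ref{prop:pullbackDelta}, which gives $\Delta(u \circ g) = g^* \Delta u$ for $u = -\log\norm{T_0}_{\st}$ viewed as a $\DSH$ function on $X_y \setminus \{0\}$ (extended as $\DSH$ across $0$ since $\max(-\log\abs{T},0)$ is continuous and locally a difference of subharmonic functions near $0$ — indeed it is subharmonic there); (2) the invariance $\norm{g^* t}_{\st, x} = \norm{t}_{\st, g(x)}$ of the standard metric, so that $-\log\norm{T_0}_{\st} \circ g = -\log\norm{g^* T_0}_{\st} = -\log\norm{c^{-1} s}_{\st} = -\log\norm{s}_{\st} + \log\abs{c}$; (3) the equivariance of divisors and of the Gauss/Haar measure: $g^* \chi_{0,1} = \chi_{G}$ where $\chi_G$ is the canonical measure attached to the standard model, which equals $\chi_{0,1}$ itself by its $\PGL_2$-invariance, and $g^*\delta_{\{0\}} = \delta_{g^{-1}(\{0\})} = \delta_{\div(s)}$. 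Putting these together, $\Delta(-\log\norm{s}_{\st,\vert X_y}) = \Delta((-\log\norm{T_0}_{\st}) \circ g) = g^*(\chi_{0,1} - \delta_{\{0\}}) = \chi_{0,1} - \delta_{\div(s)}$, and the subharmonicity/continuity of $-\log\norm{s}_{\st}$ away from $\div(s)$ follows from that of $-\log\norm{T_0}_{\st}$ away from $0$ by composition with the isomorphism $g$ (Propositions~\ref{prop:dshU1U2}, \ref{prop:dshU1U2um}, \ref{prop:finishum}).

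The main obstacle I anticipate is establishing cleanly that the full group $\PGL_2(k)$ — not just $\PGL_2(\cO_k)$ in the ultrametric case — preserves the standard metric $\nm_{\st}$, equivalently preserves the measure $\chi_{0,1}$. This is true: the standard metric is the one whose weight is $\log\max(\abs{T_0},\abs{T_1})$, and under $T_0 \mapsto \alpha T_0 + \beta T_1$, $T_1 \mapsto \gamma T_0 + \delta T_1$ with $\alpha\delta - \beta\gamma = 1$, one checks $\max(\abs{\alpha T_0 + \beta T_1}, \abs{\gamma T_0 + \delta T_1}) = \max(\abs{T_0},\abs{T_1})$ pointwise on $\EP{1}{k}$ — in the ultrametric case at the Gauss point this is the statement that the reduction of the matrix has nonzero determinant mod $\m$, which can fail for an arbitrary $g\in\PGL_2(k)$; so in fact $\chi_{0,1}$ is \emph{not} $\PGL_2(k)$-invariant, only $\PGL_2(\cO_k)$-invariant. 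So I would instead argue directly: write $\div(s) = \{a\}$ for some $k$-rational (or algebraic) point $a$; if $a \ne \infty$, a \emph{translation} $T \mapsto T - a$ (which does lie in $\PGL_2(\cO_k)$ when $\abs{a}\le 1$, but for $\abs a > 1$ one should instead compare disks directly) sends the situation to $\div = \{0\}$; more robustly, one uses the Poincaré–Lelong formula $\Delta\log\abs{P_s} = \delta_{\div(s)} - \delta_{\div(T_1^{\deg})}$ appropriately interpreted, together with $\Delta\log\max(\abs{T_0},\abs{T_1}) = \chi_{0,1} - \delta_{\{0\}} - \delta_{\{\infty\}} + \ldots$; the honest route is: $-\log\norm{s}_{\st} = \log\max(\abs{T_0},\abs{T_1}) - \log\abs{P_s}$, apply $\Delta$ termwise using Poincaré–Lelong (Theorem~\ref{th:Poincare-Lelong} and its ultrametric counterpart) to get $\Delta\log\abs{P_s} = \delta_{\div(s)}$ on the affine chart and track the behavior at $\infty$, while $\Delta\log\max(\abs{T_0},\abs{T_1}) = \chi_{0,1} - \delta_{0} - \delta_{\infty}$ plus a correction — so really the cleanest argument is to verify the identity for $\log\max(\abs{T_0},\abs{T_1})$ directly (Berkovich's classical computation) and then subtract the Poincaré–Lelong contribution of $\log\abs{P_s}$. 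I would present the proof in that order: state $-\log\norm{s}_{\st} = \log\max(\abs{T_0},\abs{T_1}) - \log\abs{P_s(T_0,T_1)}$ on $X_y$, invoke the already-noted identity for $-\log\norm{T_0}_{\st}$ plus Poincaré–Lelong to compute $\Delta(-\log\abs{P_s/T_1})$ and $\Delta(-\log\abs{T_0/T_1})$, and conclude $\Delta(-\log\norm{s}_{\st}) = \chi_{0,1} - \delta_{\div(s)}$, with continuity/subharmonicity away from $\div(s)$ coming from the explicit formula and the fact that $\max$ of subharmonic functions is subharmonic.
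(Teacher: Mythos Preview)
The paper gives no proof here (the statement ends with \qed), treating the lemma as an immediate consequence of the explicit formula for $\nm_{\st}$ and the displayed computation for $s=T_0$ just above. Your \emph{final} approach is exactly the intended one and is correct: writing
\[
-\log\norm{s}_{\st} \;=\; -\log\norm{T_0}_{\st} \;-\; \log\Bigabs{\tfrac{P_s}{T_0}},
\]
the first term has Laplacian $\chi_{0,1}-\delta_0$ by the preceding display, and the second is $\log$ of a global rational function on $X_y$ with divisor $\div(s)-[0]$, so Poincar\'e--Lelong gives $\Delta\log\abs{P_s/T_0}=\delta_{\div(s)}-\delta_0$; subtracting yields $\chi_{0,1}-\delta_{\div(s)}$. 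Subharmonicity and continuity on $X_y\setminus\div(s)$ follow since, in each affine chart, $-\log\norm{s}_{\st}$ is $\max(\log\abs{T},0)$ minus $\log$ of a nonvanishing analytic function, the latter being harmonic.

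The $\PGL_2$ reduction you begin with is a genuine detour, and you correctly diagnose why it fails: neither $\nm_{\st}$ nor $\chi_{0,1}$ is invariant under $\PGL_2(k)$ (only under $\PGL_2(\cO_k)$ in the ultrametric case, and under rotations in the archimedean case), so one cannot transport the computation from $T_0$ to an arbitrary $s$ by an automorphism. In a written proof you should drop that discussion entirely and go straight to the two-line Poincar\'e--Lelong argument above.
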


\'Etudions maintenant le comportement de la norme standard vis-\`a-vis du flot. Nous utilisons ici la terminologie et les notations de la section~\ref{sec:flotanneau}. Notons que, si $V$ est un ouvert flottant de~$Y$, alors $X_{V}$ est un ouvert flottant de~$X$.

\begin{lemm}\label{lem:nmstflottante}
Supposons que $\cA$ est flottant. Soit~$V$ un ouvert flottant de~$Y$. Alors $X_{V}$ est un ouvert flottant de~$X$ et, pour toute $s\in \Gamma(X_{V},\cO_{X}(1))$, la fonction
\[x \in X_{V} \mapsto \norm{s}_{\st,x} \in \R\]
est flottante.
\end{lemm}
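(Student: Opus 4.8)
The plan is to reduce the statement to the explicit description of the standard metric in Example~\ref{ex:metriquestandard} together with the basic property of floating spaces recalled after Definition~\ref{def:flottant}, namely $\abs{f(x^\eps)} = \abs{f(x)}^\eps$ for any section~$f$ of the structure sheaf. I would begin by checking that $X_{V}$, as well as its two charts~$X'_{V}$ and~$X''_{V}$, are indeed floating open subspaces. Starting from an immersion $V \hookrightarrow \EP{m}{\cA}$ with $\Phi_{\eps}(V) \subset V$ for all $\eps\in\intof{0,1}$, I would compose $X_{V} = \EP{1}{\cA} \times_{\cA} V \hookrightarrow \EP{1}{\cA} \times_{\cA} \EP{m}{\cA}$ with a Segre immersion to realize $X_{V}$ inside some~$\EP{N}{\cA}$, and check on homogeneous coordinates that the flow on~$\EP{N}{\cA}$ restricts to the product of the flows of the two factors. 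Since the projection $\pi$ is floating (it is locally induced by a polynomial subring inclusion, which commutes with raising multiplicative seminorms to powers), for $x \in X_{V}$ one has $\pi(x^\eps) = \pi(x)^\eps \in \Phi_{\eps}(V) \subset V$, so $x^\eps \in X_{V}$; the loci $\{\abs{T_{1}} \ne 0\}$ and $\{\abs{T_{0}} \ne 0\}$ in~$X_{V}$ being flow-stable for the same reason, $X'_{V}$ and~$X''_{V}$ are floating as well.

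Next, since $X_{V} = X'_{V} \cup X''_{V}$, it suffices to prove the floating property of $x \mapsto \norm{s}_{\st,x}$ on each of these two opens, and by the symmetry exchanging the homogeneous coordinates $T_{0}$ and~$T_{1}$ I would only treat~$X'_{V}$. There the section~$T_{1}$ of~$\cO_{X}(1)$ is nowhere vanishing, so I can write $s_{\vert X'_{V}} = g\, T_{1}$ with $g \in \cO(X'_{V})$, and the homogeneity of the metric together with the formula of Example~\ref{ex:metriquestandard} gives, for $x \in X'_{V}$,
\[ \norm{s}_{\st,x} = \abs{g(x)}\, \norm{T_{1}}_{\st,x} = \frac{\abs{g(x)}}{\max(\abs{T(x)},1)}. \]
For $x \in X'_{V}$ and $\eps \in \intof{0,1}$ I would then invoke the floating property of~$X'_{V}$ and of $g, T \in \cO(X'_{V})$ to obtain $x^\eps \in X'_{V}$, $\abs{g(x^\eps)} = \abs{g(x)}^\eps$ and $\abs{T(x^\eps)} = \abs{T(x)}^\eps$, whence $\max(\abs{T(x^\eps)},1) = \max(\abs{T(x)},1)^\eps$ since $t \mapsto t^\eps$ is nondecreasing on~$\R_{\ge 0}$ and fixes~$1$; combining,
\[ \norm{s}_{\st,x^\eps} = \frac{\abs{g(x)}^\eps}{\max(\abs{T(x)},1)^\eps} = \norm{s}_{\st,x}^\eps. \]
The identical computation on~$X''_{V}$ (after swapping $T_{0}$ and~$T_{1}$) yields a formula agreeing with this one on $X'_{V} \cap X''_{V}$, and the two glue.

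The main obstacle, such as it is, lies entirely in the first paragraph: one must take some care in verifying that the ambient flow restricts compatibly under the Segre immersion and that $X'_{V}$ and~$X''_{V}$ inherit floating structures, so that the property $\abs{f(x^\eps)} = \abs{f(x)}^\eps$ may legitimately be applied to~$g$ and~$T$. Once these points are settled, the computation of the second paragraph is routine and the lemma follows.
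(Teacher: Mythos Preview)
Your proposal is correct and follows essentially the same approach as the paper, which simply states that the result follows directly from the explicit expression of the standard metric in Example~\ref{ex:metriquestandard}. Your argument is the natural fleshing out of this one-line proof: the computation on the affine charts via $\norm{s}_{\st,x} = \abs{g(x)}/\max(\abs{T(x)},1)$ is exactly what the paper has in mind, and your additional care in verifying that $X_{V}$, $X'_{V}$, $X''_{V}$ are floating (via a Segre immersion) supplies the detail the paper leaves implicit.
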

\begin{proof}
Le r\'esultat d\'ecoule directement de l'expression explicite de la m\'etrique standard pr\'esent\'ee dans l'exemple~\ref{ex:metriquestandard}.
\end{proof}

Par les arguments de la section~\ref{sec:systemedynamiquepolarise}, on peut d\'efinir une application
\[\Phi \colon \Met(\cO_{X}(1)) \too \Met(\cO_{X}(1))\]
et une m\'etrique~$\nm_{\varphi}$ sur~$\cO_{X}(1)$ telle que
\[\varphi^*\nm_{\varphi} = \nm_{\varphi}^{\otimes d} \circ \theta \textrm{ dans } \Met(\varphi^*\cO_{X}(1)).\]
Pour tout $n\in \N$, posons $\nm_{n} := \Phi^n(\nm_{\st})$. 

\begin{lemm}\label{lem:nmphiflottante}
Supposons que $\cA$ est flottant. Soit~$V$ un ouvert flottant de~$Y$ et supposons que l'endomorphisme $\varphi_{\vert X_{V}}$ de~$X_{V}$ est flottant. Alors, pour toute $s\in \Gamma(X_{V},\cO_{X}(1))$, les fonctions 
\[x \in X_{V} \mapstoo \norm{s}_{n,x} \in \R, \textrm{ pour } n\in\N,\]
et la fonction
\[x \in X_{V} \mapstoo \norm{s}_{\varphi,x} \in \R\]
sont flottantes.
\end{lemm}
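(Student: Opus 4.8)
The plan is to argue by induction on~$n$ for the metrics $\nm_{n}=\Phi^{n}(\nm_{\st})$, and then to pass to the limit. First I would observe that the case $n=0$ is exactly Lemma~\ref{lem:nmstflottante}, which also records that $X_{V}$ is a flottant open of~$X$: for every $s\in\Gamma(X_{V},\cO_{X}(1))$ the function $x\in X_{V}\mapstoo\norm{s}_{\st,x}$ is flottante.

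For the inductive step I would show that the operator $\Phi$ preserves the property ``for every $s\in\Gamma(X_{V},\cO_{X}(1))$, the function $x\in X_{V}\mapstoo\norm{s}_{\nm,x}$ is flottante''. Since floatness is a pointwise condition, I would fix $x\in X_{V}$ and $\eps\in\intof{0,1}$, pick $s_{1}\in\{T_{0},T_{1}\}$ with $s_{1}(\varphi(x))\ne0$ and $s_{2}\in\{T_{0},T_{1}\}$ with $s_{2}(x)\ne0$ (possible since $T_{0},T_{1}$ generate $\cO_{X}(1)$), and invoke the explicit formula of Remark~\ref{rem:Phiexplicite}, which here (with $e=1$) reads
\[ {}^{\Phi}\norm{s_{2}}_{x}^{d}=\abs{g(x)}\,\norm{s_{1}}_{\varphi(x)},\qquad g:=\theta^{-1}(s_{2}^{\otimes d})/\varphi^{*}s_{1}. \]
The key point is that $g$ is an element of $\cO(W)$ on the flottant open $W:=X_{V}\setminus\varphi^{-1}(\div(s_{1}))$ containing~$x$: here I would use that $\varphi$ restricts to an endomorphism of~$X_{V}$, and that $\div(T_{0})$ and $\div(T_{1})$ are supported at the flow-fixed points $0$ and~$\infty$, hence flottant-stable, so that $W$ is flottant. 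Evaluating the identity at $x^{\eps}$ and combining $\abs{g(x^{\eps})}=\abs{g(x)}^{\eps}$ (floatness of the analytic function~$g$ on~$W$, Section~\ref{sec:flotanneau}), $\varphi(x^{\eps})=\varphi(x)^{\eps}$ (floatness of $\varphi_{\vert X_{V}}$), and $\norm{s_{1}}_{\varphi(x)^{\eps}}=\norm{s_{1}}_{\varphi(x)}^{\eps}$ (induction hypothesis applied to~$s_{1}$), I would obtain
\[ {}^{\Phi}\norm{s_{2}}_{x^{\eps}}^{d}=\abs{g(x)}^{\eps}\,\norm{s_{1}}_{\varphi(x)}^{\eps}=\big({}^{\Phi}\norm{s_{2}}_{x}^{d}\big)^{\eps}, \]
hence ${}^{\Phi}\norm{s_{2}}_{x^{\eps}}=\big({}^{\Phi}\norm{s_{2}}_{x}\big)^{\eps}$ after taking positive $d$-th roots. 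Writing an arbitrary section locally as an analytic multiple of $T_{0}$ or $T_{1}$ then upgrades this from $s_{2}\in\{T_{0},T_{1}\}$ to all sections, and from a pointwise statement to floatness of the norm functions; this settles the claim for every~$\nm_{n}$, so the functions $x\in X_{V}\mapstoo\norm{s}_{n,x}$ are flottantes.

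To conclude I would pass to the limit using Theorem~\ref{th:Phinconverge}: $\nm_{n}\to\nm_{\varphi}$ in $\Met(\cO_{X}(1))$, which via Lemma~\ref{lem:MetC} (applied with $\nm_{0}=\nm_{\st}$) gives, for every $s\in\Gamma(X_{V},\cO_{X}(1))$ and every $x\in X_{V}$, that $\norm{s}_{n,x}\to\norm{s}_{\varphi,x}$. Then for $\eps\in\intof{0,1}$,
\[ \norm{s}_{\varphi,x^{\eps}}=\lim_{n\to+\infty}\norm{s}_{n,x^{\eps}}=\lim_{n\to+\infty}\norm{s}_{n,x}^{\eps}=\Big(\lim_{n\to+\infty}\norm{s}_{n,x}\Big)^{\eps}=\norm{s}_{\varphi,x}^{\eps}, \]
by continuity of $t\mapsto t^{\eps}$ on~$\R_{\ge0}$, which is precisely the asserted floatness.

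The main obstacle I anticipate is not any genuine difficulty but the bookkeeping around floatness-stable open subsets — specifically, checking that the auxiliary sections $s_{1},s_{2}$ can be chosen so that $\theta^{-1}(s_{2}^{\otimes d})/\varphi^{*}s_{1}$ really defines an element of $\cO(W)$ on a flottant open $W\ni x$, so that the floatness of analytic functions from Section~\ref{sec:flotanneau} applies. Once that is set up, both displayed computations are pure substitution.
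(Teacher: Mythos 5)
Your proof is correct and follows essentially the same route as the paper, whose own proof is just the terse statement that the case of $\nm_{n}$ follows from the lemma on $\nm_{\st}$ and the definitions (i.e.\ the explicit formula for $\Phi$ that you spell out via Remark~\ref{rem:Phiexplicite}), and that the case of $\nm_{\varphi}$ follows by passing to the limit. Your write-up simply makes explicit the flow-stability bookkeeping (the opens $X_{V}\setminus\varphi^{-1}(\div(T_{i}))$ are stable under $\Phi_{\eps}$ because $\varphi_{\vert X_{V}}$ commutes with the flow and the divisors of $T_{0},T_{1}$ are flow-stable subsets) that the paper leaves implicit.
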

\begin{proof}
Le r\'esultat pour $\nm_{n}$ d\'ecoule du lemme~\ref{lem:nmstflottante} et des d\'efinitions. Le r\'esultat pour~$\nm_{\varphi}$ s'en d\'eduit par passage \`a la limite.
\end{proof}

Notons~$\mu_{0}$ la $\pi$-famille de mesures d\'efinie comme suit~: pour tout $y \in Y$,
\[\mu_{0,y} := \chi_{0,1} \textrm{ dans } \Mes(X_{y}).\]
Elle est continue, d'apr\`es le th\'eor\`eme~\ref{th:chicontinue}.

Pour tout $n\in \N_{\ge 1}$, posons $\mu_{n} := \frac1{d^n}\, (\varphi^n)^* \mu_{0}$. D'apr\`es la remarque~\ref{rem:pullbackproba}, pour tout $y\in Y$, $\mu_{n,y}$ est une mesure de probabilit\'e sur~$X_{y}$. D'apr\`es le lemme~\ref{lem:imagemesurecontinue}, $\mu_{n}$ est encore une $\pi$-famille de mesures continue.

\begin{lemm}
Supposons que $\cA$ est flottant. Soit~$V$ un ouvert flottant de~$Y$ et supposons que l'endomorphisme $\varphi_{\vert X_{V}}$ de~$X_{V}$ est flottant. Alors la famille de mesures~$\mu_{n}$  est flottante.
\end{lemm}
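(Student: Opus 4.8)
Le plan consiste \`a ramener l'\'enonc\'e \`a deux faits \'el\'ementaires en exploitant l'\'ecriture $\mu_{n} = \frac1{d^n}\,(\varphi^n)^*\mu_{0}$. Comme $V$, donc aussi~$X_{V}$ et~$\pi_{\vert X_{V}}$, est flottant, il s'agit de v\'erifier $(\Phi_{\eps})_{\ast}\mu_{n,y} = \mu_{n,y^\eps}$ pour tout $y\in V$ et tout $\eps\in\intof{0,1}$. Puisque $\varphi_{\vert X_{V}}$ est flottant, une r\'ecurrence imm\'ediate \`a partir de la relation $\Phi_{\eps}\circ\varphi = \varphi\circ\Phi_{\eps}$ montre que $\varphi^n_{\vert X_{V}}$ est encore flottant (il est par ailleurs fini et plat au-dessus de~$V$), et la multiplication par le scalaire~$d^{-n}$ pr\'eserve trivialement la flottance d'une famille de mesures. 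Il suffit donc d'\'etablir~: (a)~la restriction de~$\mu_{0}$ \`a~$X_{V}$ est flottante~; (b)~pour tout endomorphisme fini et plat flottant~$\psi$ de~$X_{V}$ au-dessus de~$V$ et toute famille flottante~$\mu'$ de mesures sur les fibres $(X_{y})_{y\in V}$, la famille~$\psi^{*}\mu'$ est flottante.

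Pour~(a), on fixe $y\in V$ et $\eps\in\intof{0,1}$. D'apr\`es la remarque de la section~\ref{sec:flotanneau} d\'ecrivant l'action de~$\Phi_{\eps}$ sur les fibres, la restriction de $\Phi_{\eps}\colon X_{V}\to X_{V}$ \`a la fibre~$X_{y}$ co\"incide avec le flot fibre \`a fibre $\EP{1}{\cH(y)}\to\EP{1}{\cH(y)_{\eps}}$ de la section~\ref{sec:flotcorpsvalue}, et l'on dispose d'une identification isom\'etrique $\cH(y^\eps)\simeq\cH(y)_{\eps}$ ; le lemme~\ref{lem:Phichi} fournit alors
\[(\Phi_{\eps})_{\ast}\mu_{0,y} = (\Phi_{\eps})_{\ast}\chi_{\cH(y),0,1} = \chi_{\cH(y)_{\eps},0,1^\eps} = \chi_{\cH(y)_{\eps},0,1} = \mu_{0,y^\eps}.\]

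Pour~(b), soient~$\psi$ et~$\mu'$ comme dans l'\'enonc\'e ; fixons $y\in V$, $\eps\in\intof{0,1}$ et $g\in\cC_{c}(X_{y^\eps},\R)$, et notons $\psi_{y}$ et~$\psi_{y^\eps}$ les endomorphismes (finis et plats) induits sur les fibres. La flottance de~$\psi$ se lit, sur la fibre~$X_{y}$, comme l'\'egalit\'e $\Phi_{\eps}\circ\psi_{y} = \psi_{y^\eps}\circ\Phi_{\eps}$ de morphismes $X_{y}\to X_{y^\eps}$. Comme $\Phi_{\eps}$ y r\'ealise un isomorphisme d'espaces localement annel\'es (lemme~\ref{lem:Phiepsiso}), il induit une bijection $\psi_{y}^{-1}(x)\simeq\psi_{y^\eps}^{-1}(\Phi_{\eps}(x))$ et pr\'eserve les degr\'es locaux, $\deg_{x'}(\psi_{y}) = \deg_{\Phi_{\eps}(x')}(\psi_{y^\eps})$ pour tout~$x'$ ; on en tire l'identit\'e de fonctions $(\psi_{y})_{*}(g\circ\Phi_{\eps}) = \big((\psi_{y^\eps})_{*}g\big)\circ\Phi_{\eps}$ sur~$X_{y}$. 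En combinant avec l'hypoth\`ese $\mu'_{y^\eps} = (\Phi_{\eps})_{\ast}\mu'_{y}$, on obtient
\begin{align*}
(\Phi_{\eps})_{\ast}(\psi_{y}^{*}\mu'_{y})(g)
&= (\psi_{y}^{*}\mu'_{y})(g\circ\Phi_{\eps}) = \mu'_{y}\big((\psi_{y})_{*}(g\circ\Phi_{\eps})\big)\\
&= \mu'_{y}\big(((\psi_{y^\eps})_{*}g)\circ\Phi_{\eps}\big) = \big((\Phi_{\eps})_{\ast}\mu'_{y}\big)\big((\psi_{y^\eps})_{*}g\big)\\
&= \mu'_{y^\eps}\big((\psi_{y^\eps})_{*}g\big) = (\psi_{y^\eps}^{*}\mu'_{y^\eps})(g),
\end{align*}
soit $(\Phi_{\eps})_{\ast}(\psi^{*}\mu')_{y} = (\psi^{*}\mu')_{y^\eps}$. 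Appliquant~(b) \`a $\psi = \varphi^n_{\vert X_{V}}$ et \`a la restriction de~$\mu_{0}$ donn\'ee par~(a), on conclut.

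La seule \'etape non imm\'ediate est l'\'egalit\'e des degr\'es locaux dans~(b)~: elle repose de fa\c con essentielle sur le fait que~$\Phi_{\eps}$ est un \emph{isomorphisme d'espaces localement annel\'es}, et pas seulement un hom\'eomorphisme, de sorte qu'il transporte les anneaux locaux et les rangs de modules d\'efinissant les~$\deg_{x}$. Tout le reste (stabilit\'e de la flottance par it\'eration de~$\varphi$, caract\`ere flottant de l'ouvert~$X_{V}$, compatibilit\'e de l'image r\'eciproque d'une famille de mesures avec la restriction \`a~$X_{V}$) rel\`eve de v\'erifications de routine.
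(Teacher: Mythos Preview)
Your proof is correct and follows the same approach as the paper's, which simply states that the result follows from lemme~\ref{lem:Phichi} and the definitions. You have spelled out in detail exactly what this one-line proof means: the flottance of~$\mu_{0}$ via lemme~\ref{lem:Phichi}, and the preservation of flottance under pullback by a flottant finite flat morphism, the latter relying on the fact (lemme~\ref{lem:Phiepsiso}) that~$\Phi_{\eps}$ is fiberwise an isomorphism of locally ringed spaces and hence preserves local degrees.
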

\begin{proof}
Le r\'esultat d\'ecoule du lemme~\ref{lem:Phichi} et des d\'efinitions.
\end{proof}

La relation entre~$\nm_{\st}$ et~$\chi_{0,1}$ \'enonc\'ee au lemme~\ref{lem:Deltalogs} se transf\`ere \`a~$\nm_{n}$ et~$\mu_{n}$.

\begin{prop}\label{prop:Deltalogsn}
Soient $y\in Y$ et $n \in \N$. Pour toute $s \in \Gamma(X_{y},\cO_{X_{y}}(1))$, la fonction $x\mapsto -\log(\norm{s}_{n,x})$ est sous-harmonique continue sur $X_{y} \setminus \div(s)$ et on a 
\[\Delta (-\log(\norm{s}_{n,\vert X_{y}})) = \mu_{n} - \delta_{\div(s)} \textrm{ dans } \Mes(X_{y}).\]
\end{prop}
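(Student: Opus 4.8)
The plan is to argue by induction on~$n$. For $n=0$ we have $\norm{\cdot}_{0} = \norm{\cdot}_{\st}$ and the statement reduces to Lemma~\ref{lem:Deltalogs}. Assume the statement holds for some $n\in\N$, and fix $y\in Y$ and $s\in\Gamma(X_{y},\cO_{X_{y}}(1))$. Write $\varphi_{y}\colon X_{y}\to X_{y}$ for the endomorphism induced by~$\varphi$ on the fibre: it is finite and flat of degree~$d$, being the base change of~$\varphi$, and $X_{y}\simeq\EP{1}{\cH(y)}$ is a smooth $\cH(y)$-analytic curve, so the potential theory of Section~\ref{sec:Laplacien} applies both to~$X_{y}$ and to~$\varphi_{y}$.

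The key point is a local computation. Since continuity, subharmonicity and~$\Delta$ are local, fix $x_{0}\in X_{y}$ and choose a local section $s_{1}$ of $\cO_{X_{y}}(1)$ not vanishing on a neighbourhood of $\varphi_{y}(x_{0})$. By Remark~\ref{rem:Phiexplicite}, applied with $e=1$ to the metric $\nm_{n}$, on a neighbourhood of~$x_{0}$ one has $\norm{s}_{n+1}^{d} = \abs{g}\cdot(\norm{s_{1}}_{n}\circ\varphi_{y})$, where $g := \theta^{-1}(s^{\otimes d})/(\varphi_{y}^{*}s_{1})$ is a meromorphic function with divisor $d\,\div(s) - \varphi_{y}^{*}\div(s_{1})$ (because $\theta$ is an isomorphism of line bundles, so $\div(\theta^{-1}(s^{\otimes d})) = \div(s^{\otimes d}) = d\,\div(s)$, and $\div(\varphi_{y}^{*}s_{1}) = \varphi_{y}^{*}\div(s_{1})$). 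Taking $-\log$ and dividing by~$d$ shows $-d\log(\norm{s}_{n+1}) = -\log(\abs{g}) + \big({-\log(\norm{s_{1}}_{n})}\big)\circ\varphi_{y}$ near~$x_{0}$. On a neighbourhood of $x_{0}$ contained in $X_{y}\setminus\div(s)$, the function $\log(\abs{g})$ is harmonic ($g$ being invertible there), while $-\log(\norm{s_{1}}_{n})$ is continuous subharmonic near $\varphi_{y}(x_{0})$ by the induction hypothesis, hence so is its composition with the finite morphism~$\varphi_{y}$ by Proposition~\ref{prop:shU1U2} in the archimedean case and Proposition~\ref{prop:finishum} in the ultrametric one. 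Thus $-\log(\norm{s}_{n+1})$ is continuous ($\nm_{n+1}$ being a continuous metric) and subharmonic on $X_{y}\setminus\div(s)$; running the same identity near a zero of~$s$ with an auxiliary section nonvanishing there shows $-\log(\norm{s}_{n+1,\vert X_{y}})\in\DSH(X_{y})$, so its Laplacian is a well-defined Radon measure.

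To identify that measure, apply~$\Delta$ to the displayed local identity. The Poincaré--Lelong formula gives $\Delta\log(\abs{g}) = \delta_{d\,\div(s)-\varphi_{y}^{*}\div(s_{1})} = d\,\delta_{\div(s)} - \varphi_{y}^{*}\delta_{\div(s_{1})}$, where $\varphi_{y}^{*}\delta_{D} = \delta_{\varphi_{y}^{*}D}$ is immediate from Definition~\ref{def:imagereciproquemesure}. By Proposition~\ref{prop:pullbackDelta} and the induction hypothesis,
\[\Delta\big(({-\log(\norm{s_{1}}_{n})})\circ\varphi_{y}\big) = \varphi_{y}^{*}\Delta\big({-\log(\norm{s_{1}}_{n,\vert X_{y}})}\big) = \varphi_{y}^{*}\big(\mu_{n,y} - \delta_{\div(s_{1})}\big).\]
Adding these, the $\delta_{\div(s_{1})}$-terms cancel and $\Delta(-d\log(\norm{s}_{n+1,\vert X_{y}})) = -d\,\delta_{\div(s)} + \varphi_{y}^{*}\mu_{n,y}$. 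Dividing by~$d$ and using $\mu_{n+1,y} = \tfrac1d\,\varphi_{y}^{*}\mu_{n,y}$ — which follows from $\mu_{n+1} = \tfrac1d\,\varphi^{*}\mu_{n}$ together with the fact that the pullback of a $\pi$-family restricts fibrewise — yields $\Delta(-\log(\norm{s}_{n+1,\vert X_{y}})) = \mu_{n+1,y} - \delta_{\div(s)}$, which closes the induction.

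I expect the main delicate point to be the bookkeeping in the local computation: tracking the divisors of the various sections through the isomorphism~$\theta$ and through~$\varphi_{y}^{*}$, verifying that the pointwise formula of Remark~\ref{rem:Phiexplicite} glues into the global fibrewise statement, and checking the compatibility $\delta_{\varphi_{y}^{*}D} = \varphi_{y}^{*}\delta_{D}$ at ramified points of~$\div(s_{1})$ (which is why one should pick $s_{1}$ nonvanishing near $\varphi_{y}(x_{0})$). All the analytic inputs used — the pullback compatibility $\Delta(u\circ\varphi_{y}) = \varphi_{y}^{*}\Delta u$, Poincaré--Lelong, and the stability of $\DSH$ under sums and compositions with finite morphisms — are furnished by Section~\ref{sec:Laplacien}, and nothing further should be required.
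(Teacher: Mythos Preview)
Your proof is correct and follows essentially the same approach as the paper: induction on~$n$ with the base case Lemma~\ref{lem:Deltalogs}, then the local identity from Remark~\ref{rem:Phiexplicite} combined with Poincar\'e--Lelong and the pullback compatibility of~$\Delta$ (Proposition~\ref{prop:pullbackDelta}). The only cosmetic difference is that the paper simply drops the~$\varphi_{y}^{*}\delta_{\div(s_{1})}$ contributions (both vanish on the chosen neighbourhood since $s_{1}$ is nonvanishing near $\varphi_{y}(x_{0})$), whereas you track them and let them cancel; both are valid.
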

\begin{proof}
D\'emontrons le r\'esultat par r\'ecurrence sur $n\in \N$. Le cas $n=0$ n'est autre que le r\'esultat du lemme~\ref{lem:Deltalogs}.

Soit $n\in \N$ et supposons que le r\'esultat est v\'erifi\'e pour~$\nm_{n}$. Soit  $s \in \Gamma(X_{y},\cO_{X_{y}}(1))$. D'apr\`es la remarque~\ref{rem:Phiexplicite}, pour toute $s' \in \Gamma(X_{y},\cO_{X_{y}}(1))$ et tout $x\in X_{y} \setminus \varphi^{-1}(\div(s'))$, on a
\[\norm{s}_{n+1,x}^d = \Big\lvert \frac{\theta^{-1}(s^{\otimes d})(x)}{(\varphi^*s')(x)} \Big\rvert \, \norm{s'}_{n,\varphi(x)}.\]
Soit $x\in X_{y}$. Soit $s'\in \Gamma(X_{y},\cO_{X_{y}}(1))$ telle que $s'(\varphi(x)) \ne 0$. Soit~$U$ un voisinage ouvert de~$\varphi(x)$ dans~$X_{y}$ sur lequel~$s'$ ne s'annule pas et~$\cO_{X_{y}}(1)$ est trivial. Soit~$V$ un voisinage ouvert de~$x$ dans~$\varphi^{-1}(U)$ sur lequel~$\cO_{X_{y}}(1)$ est trivial. Dans ces conditions, on peut identifier~$\theta^{-1}(s^{\otimes d})$ et $\varphi^*s'$ \`a des fonctions analytiques sur~$V$, $\varphi^*s'$ \'etant inversible. On d\'eduit alors de l'hypoth\`ese de r\'ecurrence que la fonction $z\mapsto -\log(\norm{s}_{n+1,z})$ est sous-harmonique continue sur $V \setminus \div(s)$ et, en utilisant la formule de Poincar\'e-Lelong et la propri\'et\'e de r\'etrotirette, que 
\begin{align*}
\Delta (-\log(\norm{s}^d_{n+1,\vert V})) &= \Delta (-\log(\abs{\theta^{-1}(s^{\otimes d})})_{\vert V}) + \Delta (-\log(\norm{s'}_{n}\circ \varphi)_{\vert V})\\
&=- d \,\delta_{\div(s)\cap V} + \varphi^*\mu_{n,\vert V}\\
&= d\,(\mu_{n+1,\vert V} - \delta_{\div(s)\cap V}).
\end{align*}
Le r\'esultat s'en d\'eduit.
\end{proof}

\begin{coro}\label{cor:logsphish}
Soit $y\in Y$. Pour toute $s \in \Gamma(X_{y},\cO_{X_{y}}(1))$, la fonction $x\mapsto -\log(\norm{s}_{\varphi,x})$ est sous-harmonique continue sur $X_{y} \setminus \div(s)$. 
\end{coro}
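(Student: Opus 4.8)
L'id\'ee est d'obtenir l'\'enonc\'e en faisant tendre~$n$ vers~$+\infty$ dans la proposition~\ref{prop:Deltalogsn}. Rappelons que, d'apr\`es le th\'eor\`eme~\ref{th:Phinconverge}, la suite $\nm_{n}=\Phi^{n}(\nm_{\st})$ converge vers~$\nm_{\varphi}$ dans~$\Met(\cO_{X}(1))$, c'est-\`a-dire que $d_{K}(\nm_{n},\nm_{\varphi})\to 0$ pour toute partie compacte~$K$ de~$X$. Fixons $s\in\Gamma(X_{y},\cO_{X_{y}}(1))$. Comme, dans toute fibre, le rapport de la $\nm_{n}$-norme et de la $\nm_{\varphi}$-norme d'un m\^eme \'el\'ement non nul ne d\'epend pas de cet \'el\'ement, on a $\bignorm{\log(\norm{s}_{n,\vert X_{y}})-\log(\norm{s}_{\varphi,\vert X_{y}})}_{K}\le d_{K}(\nm_{n},\nm_{\varphi})$ pour toute partie compacte~$K$ de~$X_{y}$ disjointe de~$\div(s)$. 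Par cons\'equent, la suite $(-\log(\norm{s}_{n,\vert X_{y}}))_{n\in\N}$ converge vers $-\log(\norm{s}_{\varphi,\vert X_{y}})$ uniform\'ement sur tout compact de~$X_{y}\setminus\div(s)$.

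On en d\'eduit alors l'\'enonc\'e. D'une part, $\nm_{\varphi}$ \'etant une m\'etrique continue, la fonction $u_{\varphi}:=-\log(\norm{s}_{\varphi,\vert X_{y}})$ est continue et \`a valeurs r\'eelles sur~$X_{y}\setminus\div(s)$, donc en particulier semi-continue sup\'erieurement et non identiquement \'egale \`a~$-\infty$ sur toute composante connexe. D'autre part, d'apr\`es la proposition~\ref{prop:Deltalogsn}, chacune des fonctions $u_{n}:=-\log(\norm{s}_{n,\vert X_{y}})$ est sous-harmonique sur~$X_{y}\setminus\div(s)$. Il reste \`a constater qu'une limite uniforme sur tout compact de fonctions sous-harmoniques est sous-harmonique, ce que je v\'erifierais au moyen de la caract\'erisation de la sous-harmonicit\'e par les majorantes harmoniques~: le th\'eor\`eme~\ref{th:caracterisationsh} lorsque~$\cH(y)$ est archim\'edien, la d\'efinition de~$\SH$ lorsque~$\cH(y)$ est ultram\'etrique. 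Concr\`etement, \'etant donn\'es un ouvert relativement compact (resp. un domaine affino\"ide)~$W$ de~$X_{y}\setminus\div(s)$ de bord compact et une fonction~$h$ harmonique sur~$W$ telle que $u_{\varphi}\le h$ sur~$\partial W$, on obtient, pour tout $\eps>0$, les in\'egalit\'es $u_{n}\le u_{\varphi}+\eps\le h+\eps$ sur~$\partial W$ pour~$n$ assez grand, d'o\`u $u_{n}\le h+\eps$ sur~$W$ par sous-harmonicit\'e de~$u_{n}$, puis $u_{\varphi}\le h+\eps$ sur~$W$ en faisant tendre~$n$ vers~$+\infty$, et enfin $u_{\varphi}\le h$ sur~$W$ en faisant tendre~$\eps$ vers~$0$.

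L'argument est pour l'essentiel routinier~; le seul point demandant un peu de soin est le dernier, la sous-harmonicit\'e sur la fibre devant \^etre v\'erifi\'ee via la caract\'erisation appropri\'ee dans chacun des deux cadres, archim\'edien et ultram\'etrique, trait\'es s\'epar\'ement \`a la section~\ref{sec:Laplacien}. Signalons, pour anticiper la suite, que ce m\^eme passage \`a la limite fournira \'egalement l'\'egalit\'e $\Delta(u_{\varphi})=\mu_{\varphi,y}-\delta_{\div(s)}$ dans~$\Mes(X_{y})$, une fois d\'efinie la famille $(\mu_{\varphi,y})_{y\in Y}$, en combinant la proposition~\ref{prop:Deltalogsn} avec la convergence vague des laplaciens $\Delta u_{n}$ vers~$\Delta u_{\varphi}$~; mais pour le pr\'esent corollaire, seules la sous-harmonicit\'e et la continuit\'e de~$u_{\varphi}$ sont requises.
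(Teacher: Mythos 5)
Votre d\'emonstration est correcte et suit essentiellement la m\^eme voie que celle du texte~: convergence uniforme sur tout compact de $X_{y}\setminus\div(s)$ des fonctions $-\log(\norm{s}_{n})$ vers $-\log(\norm{s}_{\varphi})$ (th\'eor\`eme~\ref{th:Phinconverge}), puis sous-harmonicit\'e de chaque terme (proposition~\ref{prop:Deltalogsn}) et passage \`a la limite. Le texte laisse implicite le fait, standard, qu'une limite uniforme locale de fonctions sous-harmoniques continues est sous-harmonique continue~; vous l'explicitez au moyen de la caract\'erisation par les majorantes harmoniques dans chacun des deux cadres, ce qui est pr\'ecis\'ement l'argument attendu.
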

\begin{proof}
D'apr\`es le th\'eor\`eme~\ref{th:Phinconverge}, pour toute $s \in \Gamma(X_{y},\cO_{X_{y}}(1))$, la suite de fonctions $(x\mapsto -\log(\norm{s}_{n,x}))_{n\in\N}$ converge vers la fonction $x\mapsto -\log(\norm{s}_{\varphi,x})$ uniform\'ement sur tout compact de $X_{y}\setminus \div(s)$. Le r\'esultat d\'ecoule alors de la proposition~\ref{prop:Deltalogsn}.
\end{proof}

Soit $n\in\N$. Pour tout $x\in X$,  la quantit\'e $\log(\|s\|_{n,x}) - \log(\|s\|_{\st,x})$ est ind\'ependante du choix de $s\in \cO_{Y}(1)(x) \setminus\{0\}$. On peut donc d\'efinir une fonction $\lambda_{n}\colon X\to\R$ en posant, pour tout $x\in X$, 
\[\lambda_{n}(x) := \log(\|s\|_{n,x}) - \log(\|s\|_{\st,x}),\]
pour $s\in \cO_{Y}(1)(x) \setminus\{0\}$.

De m\^eme, on peut d\'efinir une fonction $\lambda_{\varphi}\colon X\to\R$ en posant, pour tout $x\in X$, 
\[\lambda_{\varphi}(x) := \log(\|s\|_{\varphi,x}) - \log(\|s\|_{\st,x}),\]
pour $s\in \cO_{Y}(1)(x) \setminus\{0\}$.

On d\'eduit du th\'eor\`eme~\ref{th:Phinconverge} que la suite de fonctions $(\lambda_{n})_{n\in\N}$ converge vers~$\lambda_{\varphi}$ uniform\'ement sur tout compact de~$X$.

\begin{lemm}\label{lem:lambdaphi}
\ 
\begin{enumerate}[i)]
\item Les fonctions~$\lambda_{n}$, pour $n\in\N$, et $\lambda_{\varphi}$ sont continues sur~$X$. 

\item Pour tout $y\in Y$, les fonctions~$\lambda_{n,\vert X_{y}}$, pour $n\in\N$, et $\lambda_{\varphi,\vert X_{y}}$ sont diff\'erences locales de fonctions sous-harmoniques continues.

\item Supposons que $\cA$ est flottant. Soit~$V$ un ouvert flottant de~$Y$ et supposons que l'endomorphisme $\varphi_{\vert X_{V}}$ de~$X_{V}$ est flottant. Alors, les fonctions~$\lambda_{n}$, pour $n\in\N$, et $\lambda_{\varphi}$ sont $\log$-flottantes.
\end{enumerate}
\end{lemm}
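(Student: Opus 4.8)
The plan is to prove the three assertions separately, in each case reducing to facts already established about the metrics $\nm_{\st}$, $\nm_{n}$ and $\nm_{\varphi}$ on $\cO_{X}(1)$. The common device is the local identity: for any section $s$ of $\cO_{X}(1)$ and any $x$ with $s(x)\neq 0$, one has $\lambda_{n}(x) = (-\log\norm{s}_{\st,x}) - (-\log\norm{s}_{n,x})$ and $\lambda_{\varphi}(x) = (-\log\norm{s}_{\st,x}) - (-\log\norm{s}_{\varphi,x})$, and the point is that the two global sections $T_{0}, T_{1}$ of $\cO_{X}(1)$ already cover $X$ by their non-vanishing loci ($T_{1}$ is invertible on $X'$, and $T_{0}$ on $X''$), so no partition of unity is needed. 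For part~i), I would observe that each $\nm_{n}$ is by construction a continuous metric and $\nm_{\st}$ is continuous; hence on the open set where $s\in\{T_{0},T_{1}\}$ does not vanish, both $x\mapsto\log\norm{s}_{n,x}$ and $x\mapsto\log\norm{s}_{\st,x}$ are continuous and real-valued, so $\lambda_{n}\in\cC(X,\R)$. For $\lambda_{\varphi}$ one may either argue in the same way from the continuity of $\nm_{\varphi}$ given by Theorem~\ref{th:Phinconverge}, or simply use that $(\lambda_{n})_{n}$ converges to $\lambda_{\varphi}$ uniformly on every compact of $X$, a uniform limit of continuous functions being continuous.

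For part~ii), I would fix $y\in Y$ and a point $x_{0}\in X_{y}$, choose $s\in\{T_{0},T_{1}\}$ with $s(x_{0})\neq 0$, and work on the open neighbourhood $U := X_{y}\setminus\div(s)$ of $x_{0}$. On $U$ the function $\lambda_{n,\vert X_{y}}$ equals $(-\log\norm{s}_{\st,\vert X_{y}}) - (-\log\norm{s}_{n,\vert X_{y}})$, where the first summand is continuous and subharmonic on $X_{y}\setminus\div(s)$ by Lemma~\ref{lem:Deltalogs} and the second by Proposition~\ref{prop:Deltalogsn}; this exhibits the required local decomposition, so $\lambda_{n,\vert X_{y}}\in\DSHC(X_{y})$. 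Running the same computation with $\nm_{\varphi}$ in place of $\nm_{n}$, and invoking Corollary~\ref{cor:logsphish} for the continuity and subharmonicity of $-\log\norm{s}_{\varphi,\vert X_{y}}$ on $X_{y}\setminus\div(s)$, gives $\lambda_{\varphi,\vert X_{y}}\in\DSHC(X_{y})$. Here one should not try to pass to the limit, since $\DSHC$ need not be stable under uniform limits; the direct argument via Corollary~\ref{cor:logsphish} is what is needed.

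For part~iii), I would restrict everything to $X_{V}$, which is a flottant open of $X$ because $V$ is flottant, and fix $x\in X_{V}$ and $\eps\in\intof{0,1}$. Choosing $s\in\{T_{0},T_{1}\}$ with $s(x)\neq 0$: Lemma~\ref{lem:nmstflottante} gives $\norm{s}_{\st,x^{\eps}}=\norm{s}_{\st,x}^{\eps}>0$, so in particular $s(x^{\eps})\neq 0$ and $\lambda_{\bullet}(x^{\eps})$ is computed with the same $s$, while Lemma~\ref{lem:nmphiflottante} gives $\norm{s}_{n,x^{\eps}}=\norm{s}_{n,x}^{\eps}$ and $\norm{s}_{\varphi,x^{\eps}}=\norm{s}_{\varphi,x}^{\eps}$. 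Taking logarithms in the local identities above yields $\lambda_{n}(x^{\eps})=\eps\,\lambda_{n}(x)$ and $\lambda_{\varphi}(x^{\eps})=\eps\,\lambda_{\varphi}(x)$, that is, $\lambda_{n}$ and $\lambda_{\varphi}$ are $\log$-flottantes on $X_{V}$; alternatively the statement for $\lambda_{\varphi}$ follows from that for the $\lambda_{n}$ by pointwise passage to the limit. I do not expect any serious obstacle: the whole content is already contained in Lemma~\ref{lem:Deltalogs}, Proposition~\ref{prop:Deltalogsn}, Corollary~\ref{cor:logsphish} and Lemmas~\ref{lem:nmstflottante}--\ref{lem:nmphiflottante}. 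The only point requiring a little care is that the decomposition $\lambda_{\bullet}=(-\log\norm{s}_{\st})-(-\log\norm{s}_{\bullet})$ is meaningful only away from $\div(s)$, which is harmless here precisely because $\{T_{0},T_{1}\}$ trivialises $\cO_{X}(1)$ on the covering $\{X',X''\}$.
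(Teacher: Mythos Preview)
Your proposal is correct and follows essentially the same approach as the paper: for each part you reduce to the established properties of the metrics via the local identity $\lambda_{\bullet} = \log\norm{s}_{\bullet} - \log\norm{s}_{\st}$ on the non-vanishing locus of a section, invoking Lemma~\ref{lem:Deltalogs}, Proposition~\ref{prop:Deltalogsn}, Corollary~\ref{cor:logsphish} and Lemmas~\ref{lem:nmstflottante}--\ref{lem:nmphiflottante} exactly as the paper does. Your choice to work concretely with the sections $T_{0},T_{1}$ rather than an arbitrary local trivialisation is a harmless specialisation, and your remark that one must not argue ii) for $\lambda_{\varphi}$ by passing to the limit in $\DSHC$ is a useful caution the paper leaves implicit.
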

\begin{proof}
i) Soit $n\in\N$. Soit~$x\in X$. Il existe un voisinage ouvert~$U$ de~$x$ dans~$X$ et une section $s\in\Gamma(U,\cO_{X}(1))$ ne s'annulant pas sur~$U$. On a alors, 
\[\forall z\in U,\ \lambda_{n}(z) := \log(\|s\|_{n,z}) - \log(\|s\|_{\st,z}).\]
La continuit\'e de~$\lambda_{n}$ en~$x$ se d\'eduit alors de celles de $z\mapsto \|s\|_{n,z}$ et $z\mapsto \|s\|_{\st,z}$.

Le r\'esultat pour~$\lambda_{\varphi}$ se d\'emontre de m\^eme.

ii) Soit~$y\in Y$. On peut recouvrir~$X_{y}$ par des ouverts de la forme $X_{y}\setminus \div(s)$, avec $s\in\Gamma(X_{y},\cO_{X_{y}}(1))$. Le r\'esultat d\'ecoule alors du lemme~\ref{lem:Deltalogs}, de la proposition~\ref{prop:Deltalogsn} et du corollaire~\ref{cor:logsphish}. 

iii) C'est une cons\'equence directe du lemme~\ref{lem:nmphiflottante}.
\end{proof}

Pour tout $y\in Y$, posons
\[\mu_{\varphi,y} := \chi_{0,1} - \Delta (\lambda_{\varphi,\vert X_{y}} ) \textrm{ dans } \Mes(X_{y}).\]
Consid\'erons la famille de mesures $\mu_{\varphi} := (\mu_{\varphi,y})_{y\in Y}$.

\begin{lemm}\label{lem:muphiflottante}
Supposons que $\cA$ est flottant. Soit~$V$ un ouvert flottant de~$Y$ et supposons que l'endomorphisme $\varphi_{\vert X_{V}}$ de~$X_{V}$ est flottant. Alors la famille de mesures~$\mu_{\varphi}$ est flottante.

\end{lemm}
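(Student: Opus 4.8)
The plan is to reduce the statement to the definition of flottance for a $\pi$-family of measures, namely that $(\Phi_{\eps})_{\ast}\mu_{\varphi,y}=\mu_{\varphi,y^\eps}$ for every $y\in V$ and every $\eps\in\intof{0,1}$, and then to check this equality directly using the formula $\mu_{\varphi,y}=\chi_{0,1}-\Delta(\lambda_{\varphi,\vert X_{y}})$ together with the already established flow-behaviour of $\chi_{0,1}$, of the potential $\lambda_{\varphi}$, and of the Laplacian. First I would fix $y\in V$ and $\eps\in\intof{0,1}$; since $V$ is flottant we have $y^\eps\in V$, and the map $\Phi_{\eps}$ restricts to a homeomorphism $X_{y}\simto X_{y^\eps}$ which, under the identifications $X_{y}\simeq\EP{1}{\cH(y)}$ and $X_{y^\eps}\simeq\EP{1}{\cH(y^\eps)}\simeq\EP{1}{\cH(y)_{\eps}}$, coincides with the flow map of section~\ref{sec:flotcorpsvalue} (this is the remark following Notation~\ref{nota:xeps}, carried over to $\EP{1}{}$ and to flottant analytic spaces). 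By linearity of push-forward it then suffices to treat the two summands of $\mu_{\varphi,y}$ separately.

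For the first summand, Lemma~\ref{lem:Phichi} applied with centre $0$ and radius $1$ gives $(\Phi_{\eps})_{\ast}\chi_{0,1,y}=\chi_{0,1,y^\eps}$ (this is exactly the flottance of $\mu_{0}$ recorded just after $\mu_{0}$ is introduced). For the second summand, recall from Lemma~\ref{lem:lambdaphi} that $\lambda_{\varphi}$ is $\log$-flottante and that $\lambda_{\varphi,\vert X_{y}}\in\DSHC(X_{y})$, hence in $\DSH(X_{y})$. The $\log$-flottance identity $\lambda_{\varphi}(x^\eps)=\eps\,\lambda_{\varphi}(x)$ says precisely that the function $u_{\eps}:=\lambda_{\varphi,\vert X_{y}}\circ\Phi_{\eps}^{-1}$ on $X_{y^\eps}$ attached to $u:=\lambda_{\varphi,\vert X_{y}}$ in Proposition~\ref{prop:epsDelta} equals $\eps^{-1}\lambda_{\varphi,\vert X_{y^\eps}}$. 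Plugging this into the formula $\Delta u(f)=\eps\,\Delta u_{\eps}(f_{\eps})$ of Proposition~\ref{prop:epsDelta} (valid for every $f\in\cC_{c}(X_{y},\R)$, with $f_{\eps}=f\circ\Phi_{\eps}^{-1}$), the factors $\eps$ and $\eps^{-1}$ cancel and one gets $\Delta(\lambda_{\varphi,\vert X_{y}})(f)=\Delta(\lambda_{\varphi,\vert X_{y^\eps}})(f\circ\Phi_{\eps}^{-1})$ for all such $f$. Taking $f=g\circ\Phi_{\eps}$ for an arbitrary $g\in\cC_{c}(X_{y^\eps},\R)$ (permissible, $\Phi_{\eps}$ being a homeomorphism), this reads $(\Phi_{\eps})_{\ast}\Delta(\lambda_{\varphi,\vert X_{y}})=\Delta(\lambda_{\varphi,\vert X_{y^\eps}})$.

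Combining the two, $(\Phi_{\eps})_{\ast}\mu_{\varphi,y}=\chi_{0,1,y^\eps}-\Delta(\lambda_{\varphi,\vert X_{y^\eps}})=\mu_{\varphi,y^\eps}$, which is the asserted flottance. I do not expect a serious obstacle: the computation is a one-line cancellation, and the only two points demanding care are the identification of the fibre-wise flow $\Phi_{\eps}\colon X_{y}\to X_{y^\eps}$ with the flow map of section~\ref{sec:flotcorpsvalue} (so that Proposition~\ref{prop:epsDelta} may legitimately be applied on the fibre $X_{y}\simeq\EP{1}{\cH(y)}$) and the correct bookkeeping of the exponents $\eps$ and $\eps^{-1}$ arising respectively from $\log$-flottance and from Proposition~\ref{prop:epsDelta}.
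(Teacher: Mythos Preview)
Your proposal is correct and follows exactly the same approach as the paper's proof: the paper also decomposes $\mu_{\varphi,y}=\chi_{0,1}-\Delta(\lambda_{\varphi,\vert X_{y}})$, invokes lemme~\ref{lem:Phichi} for the first summand, and combines the $\log$-flottance of~$\lambda_{\varphi}$ (lemme~\ref{lem:lambdaphi}) with proposition~\ref{prop:epsDelta} for the second. Your write-up merely unpacks the $\eps$-cancellation that the paper leaves implicit.
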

\begin{proof}
La propri\'et\'e est v\'erifi\'ee pour~$\chi_{0,1}$ d'apr\`es le lemme~\ref{lem:Phichi}. Le fait qu'elle soit v\'erifi\'ee pour $\Delta (\lambda_{\varphi,\vert X_{y}})$ d\'ecoule du fait que $\lambda_{\varphi}$ est $\log$-flottante (\cf~lemme~\ref{lem:lambdaphi}) et des propri\'et\'es du laplacien. 
\end{proof}

\begin{prop}\label{prop:convergenceXy}
Pour tout $y\in Y$, la suite $(\mu_{n,y})_{n\in \N}$ converge vaguement vers $\mu_{\varphi,y}$.  

En particulier, pour tout $y\in Y$, la mesure~$\mu_{\varphi,y}$ est positive et de masse totale~1 et on a
\[\varphi^* \mu_{\varphi,y} = d\, \mu_{\varphi,y}.\]
\end{prop}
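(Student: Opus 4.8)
The plan is to prove the vague convergence $\mu_{n,y}\to\mu_{\varphi,y}$ for a fixed $y\in Y$ and then deduce the stated properties of the limit measure. First I would fix $y$ and work entirely on the curve $X_{y}\simeq\EP{1}{\cH(y)}$, which is a smooth $\cH(y)$-analytic curve to which the potential theory of Section~\ref{sec:Laplacien} applies. Choose a nonzero section $s\in\Gamma(X_{y},\cO_{X_{y}}(1))$, say $s=T_{0}$, so that $\div(s)=\{0\}$ and, by Proposition~\ref{prop:Deltalogsn}, for every $n$ one has $\Delta(-\log\norm{s}_{n,\vert X_{y}})=\mu_{n,y}-\delta_{0}$, while by definition $\Delta(-\log\norm{s}_{\varphi,\vert X_{y}})=\mu_{\varphi,y}-\delta_{0}$ (since $-\log\norm{s}_{\varphi}=-\log\norm{s}_{\st}-\lambda_{\varphi}$ and $\Delta(-\log\norm{s}_{\st,\vert X_{y}})=\chi_{0,1}-\delta_{0}$ by Lemma~\ref{lem:Deltalogs}, together with the definition $\mu_{\varphi,y}=\chi_{0,1}-\Delta(\lambda_{\varphi,\vert X_{y}})$).

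Next I would invoke Theorem~\ref{th:Phinconverge}, or rather its pointwise-on-fibers consequence already recorded before Lemma~\ref{lem:lambdaphi}: the functions $-\log\norm{s}_{n,\vert X_{y}}$ converge to $-\log\norm{s}_{\varphi,\vert X_{y}}$ uniformly on every compact subset of $X_{y}$ (the convergence of metrics is uniform on $X$, hence on $X_{y}$, and away from $\div(s)$ this transfers to the logarithms of norms of $s$; near $\div(s)$ one works with a different section). These functions are continuous and, by Lemma~\ref{lem:lambdaphi}(ii), lie in $\DSHC$ of suitable open subsets; more precisely each $-\log\norm{s}_{n,\vert X_{y}}$ is subharmonic continuous on $X_{y}\setminus\div(s)$ by Proposition~\ref{prop:Deltalogsn} and the limit is subharmonic continuous there by Corollary~\ref{cor:logsphish}. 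I would then apply the vague-convergence criterion for Laplacians: Lemma~\ref{lem:convergenceDeltaun} in the archimedean case and Lemma~\ref{lem:convergenceDeltaunum} in the ultrametric case (packaged, together with the decreasing-sequence hypothesis, through the approximation machinery — but here one does not even need monotonicity, only uniform convergence on compacta of a sequence of subharmonic functions, which is exactly what the second halves of those lemmas give, using the mass bounds of Proposition~\ref{prop:majorationdisque}). This yields $\Delta(-\log\norm{s}_{n,\vert X_{y}})\to\Delta(-\log\norm{s}_{\varphi,\vert X_{y}})$ vaguely on $X_{y}\setminus\div(s)$, and since all measures involved are probability measures on the compact space $X_{y}$ and differ from the Laplacians only by the fixed mass $\delta_{0}$, a standard partition-of-unity argument covering $\div(s)$ by a second chart (using another section $s'$ with $\div(s')\cap\div(s)=\emptyset$) upgrades this to vague convergence $\mu_{n,y}\to\mu_{\varphi,y}$ on all of $X_{y}$.

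For the "in particular" clause: by Remark~\ref{rem:pullbackproba} each $\mu_{n,y}$ is a probability measure on the compact curve $X_{y}$, and $\Mes^{1}(X_{y})$ is compact and closed in the vague topology (Proposition~\ref{prop:mesuresprobacompact}); hence the vague limit $\mu_{\varphi,y}$ is again a positive measure of total mass~$1$. Finally, to get $\varphi^{*}\mu_{\varphi,y}=d\,\mu_{\varphi,y}$, I would pass to the limit in the relation $\varphi^{*}\mu_{n,y}=d\,\mu_{n+1,y}$, which holds by construction of $\mu_{n}=\tfrac1{d^{n}}(\varphi^{n})^{*}\mu_{0}$; since $\varphi$ is finite flat, $\varphi^{*}$ is continuous for the vague topology on the relevant spaces of measures (it is the transpose of the continuous map $f\mapsto\varphi_{*}f$ of Proposition~\ref{prop:varphi_{*}f}), so $\varphi^{*}\mu_{n,y}\to\varphi^{*}\mu_{\varphi,y}$ and $d\,\mu_{n+1,y}\to d\,\mu_{\varphi,y}$, giving the equality. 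The main obstacle I anticipate is the bookkeeping near $\div(s)$: the two-chart covering argument must be arranged so that the uniform convergence of metrics genuinely transfers to uniform convergence on compacta of the relevant potentials in both charts, and so that the mass-estimate lemmas (Propositions~\ref{prop:majorationdisque}, \ref{prop:majorationSHum}) apply on the overlap; this is routine but is where the real content of the potential-theoretic input of Section~\ref{sec:Laplacien} gets used.
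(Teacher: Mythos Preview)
Your approach is correct but genuinely different from the paper's. You work directly with the potentials $-\log\norm{s}_{n}$ on $X_{y}\setminus\div(s)$ and argue that uniform convergence of continuous subharmonic functions forces vague convergence of their Laplacians, then patch over two charts. The paper instead sets $\rho_{n}:=\lambda_{\varphi,\vert X_{y}}-\lambda_{n,\vert X_{y}}$ (so $\Delta\rho_{n}=\mu_{n,y}-\mu_{\varphi,y}$), first extends scalars so that $\cH(y)$ is not trivially valued, approximates an arbitrary $f\in\cC(X_{y},\R)$ by an affable $f_{\eps}$ via Theorem~\ref{th:densiteaffable}, and uses the symmetry formula (Proposition~\ref{prop:symetrieDelta}) to get $\bigl|\int f_{\eps}\,\diff\Delta\rho_{n}\bigr|=\bigl|\int\rho_{n}\,\diff\Delta f_{\eps}\bigr|\le\norm{\rho_{n}}_{X_{y}}\int\diff|\Delta f_{\eps}|$, the last factor being finite by Proposition~\ref{prop:borneaffable}.

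Your route is more elementary here and sidesteps the scalar-extension step entirely; one caveat is that Lemma~\ref{lem:convergenceDeltaun} as stated assumes the approximating sequence is of class~$\cC^{\infty}$, so you are really re-proving a small variant (as you note, the mass bound of Proposition~\ref{prop:majorationdisque} plus the duality definition of $\Delta$ is all that is needed). The paper's route, by contrast, is a deliberate rehearsal of the exact mechanism---affable approximation, symmetry, uniform mass bound---that drives the proof of Theorem~\ref{th:muphicontinue}, where one must control $\int f\,\diff(\mu_{\varphi,y}-\mu_{n,y})$ \emph{uniformly in~$y$} over a compact; there your fibrewise Laplacian-convergence argument would not suffice.
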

\begin{proof}
Soit $y\in Y$. Soit $L$ une extension valu\'ee compl\`ete de~$\cH(y)$. L'endomorphisme~$\varphi$ de $X_{y} \simeq \E{1}{\cH(y)}$ induit, par extension des scalaires, un endomorphisme~$\varphi_{L}$ de~$\E{1}{L}$. En outre, le r\'esultat sur $\E{1}{L}$ implique celui sur~$X_{y}$. Quitte \`a remplacer~$\cH(y)$ par une extension~$L$ convenable, on peut donc supposer que~$\cH(y)$ n'est pas trivialement valu\'e.

Pour tout $n\in \N$, posons $\rho_{n} := \lambda_{\varphi,\vert X_{y}}-\lambda_{n,\vert X_{y}} \in \cC(X_{y},\R)$. La fonction~$\rho_{n}$ est localement diff\'erence de fonctions sous-harmoniques continues et on a
\[\Delta \rho_{n} = \mu_{n} - \mu_{\varphi}.\]
En outre, la suite de fonctions~$(\rho_{n})_{n\in\N}$ converge uniform\'ement vers~0 sur~$X_{y}$.

Soit $f\in \C(X_{y},\R)$. Soit $\eps\in \R_{>0}$. D'apr\`es le th\'eor\`eme~\ref{th:densiteaffable}, il existe $f_{\eps} \in \MCL(X_{y},\R)$ telle que $\norm{f-f_{\eps}}_{X_{y}}\le \eps$.

Soit $n\in \N$. Par sym\'etrie du laplacien, on a 
\[\Bigabs{ \int f_{\eps} \diff\mu_{\varphi,y} -  \int f_{\eps} \diff \mu_{n,y}}  = \Bigabs{\int \rho_{n} \diff \Delta f_{\eps}}  \le \norm{\rho_{n}}_{X_{y}} \int  \diff\abs{\Delta f_{\eps}},\]
d'o\`u 
\begin{align*}
\Bigabs{ \int f \diff\mu_{\varphi,y} -  \int f \diff\mu_{n,y}} &\le \Bigabs{ \int (f-f_{\eps}) \diff \mu_{\varphi,y}} + \Bigabs{ \int f_{\eps} \diff\mu_{\varphi,y} -  \int f_{\eps} \diff \mu_{n,y}} \\
& \quad  +  \Bigabs{ \int (f_{\eps}-f) \diff \mu_{n,y}}\\
&\le \eps \int \diff \abs{\mu_{\varphi,y}} +  \norm{\rho_{n}}_{X_{y}} \int  \diff\abs{\Delta f_{\eps}} +\eps.
\end{align*}
On en d\'eduit que la suite $(\mu_{n,y})_{n\in \N}$ converge vaguement vers $\mu_{\varphi,y}$. Le reste de l'\'enonc\'e s'en d\'eduit en passant \`a la limite.


%
%
\end{proof}

\begin{theo}\label{th:muphicontinue}
La famille de mesures $\mu_{\varphi}$ est continue.
\end{theo}
\begin{proof}
Soient $r,s \in \intoo{1,+\infty}$ avec $r<s$. Consid\'erons la droite analytique~$\E{1}{\cA}$ munie de la coordonn\'ee~$T_{0}$, posons 
\[C_{\cA}(r,s) := \{z\in \E{1}{\cA} : r < \abs{T_{0}(z)} < s\}\] 
et $Y' := Y\times_{\cA} C_{\cA}(r,s)$. Pour tout $y'\in Y'$, on a $\abs{T_{0}(y')}>1$.

La premi\`ere projection fournit un morphisme $\pr_{1} \colon Y' \to Y$ et toutes les donn\'ees de la section~: $X$, $\varphi$, etc. s'\'etendent par changement de base \`a~$Y'$. En outre, puisque $\pr_{1}$ est continue et surjective, il suffit de d\'emontrer le r\'esultat apr\`es changement de base \`a~$Y'$. Quitte \`a remplacer~$Y$ par~$Y'$, on peut donc supposer que, pour tout $y\in Y$, il existe un voisinage ouvert~$V$ de~$y$ et $\alpha \in \cO(V)$ tel que $\abs{\alpha(y)}>1$.

Soit $y\in Y$. D\'emontrons que la famille~$\mu_{\varphi}$ est continue en~$y$. Par hypoth\`ese, il existe un voisinage ouvert~$V$ de~$y$ et $\alpha \in \cO(V)$ tel que $\abs{\alpha(y)}>1$. Quitte \`a restreindre~$V$, on peut supposer qu'il se plonge dans un espace affine analytique sur~$\cA$ et que, pour tout $z\in V$, on a $\abs{\alpha(z)}>1$. 

Soit $f\in\cC_{c}(X_{V},\R)$. Nous souhaitons montrer que la fonction
\[\fonction{I_{f,\varphi}}{V}{\R}{y}{\disp \int f \diff \mu_{\varphi,y}}\]
est continue en~$y$. Or, pour tout $n\in \N$, la fonction 
\[\fonction{I_{f,n}}{V}{\R}{y}{ \disp\int f \diff \mu_{n,y}},\]
est continue et,  d'apr\`es la proposition~\ref{prop:convergenceXy}, la suite de fonctions $(I_{f,n})_{n\in \N}$ converge simplement vers~$I_{f,\varphi}$. Pour conclure, il suffit de montrer que la convergence est uniforme sur un voisinage de~$y$.

Soit~$K$ un voisinage compact de~$y$ dans~$V$. Pour tout $n\in \N$, posons $\rho_{n} := \lambda_{\varphi,\vert X_{V}}-\lambda_{n,\vert X_{V}} \in \cC(X_{V},\R)$. La suite de fonctions~$(\rho_{n})_{n\in\N}$ converge uniform\'ement vers~0 sur~$X_{K}$.

Soit $\eps\in \R_{>0}$. D'apr\`es le th\'eor\`eme~\ref{th:densiteaffable}, il existe $f_{\eps} \in \MCL(X_{V},\R)$ telle que $\norm{f-f_{\eps}}_{X_{K}}\le \eps$. D'apr\`es la proposition~\ref{prop:borneaffable}, il existe~$M_{\eps} \in \R$ tel que,
\[\forall z \in K,\, \int \diff\abs{\Delta f_{\eps,\vert z}} \le M_{\eps}.\]

Soient $n\in \N$ et $z\in K$. En utilisant le m\^eme raisonnement que dans la preuve de la proposition~\ref{prop:convergenceXy} ainsi que la positivit\'e de~$\mu_{\varphi,z}$, on montre que
\[\abs{I_{f,\varphi}(z)-I_{f,n}(z)} \le 2\eps + \norm{\rho_{n}}_{X_{K}} \, M_{\eps}. \]
Le r\'esultat s'ensuit.
\end{proof}

\begin{rema}
On peut \'egalement proposer une preuve plus directe du th\'eor\`eme~\ref{th:muphicontinue}, suivant celle de R.~Ma\~n\'e dans le cas complexe (\cf~\cite[theorem~B]{ManeHausdorffDimension}). Elle est bas\'ee sur la caract\'erisation des mesures d'\'equilibre rappel\'ee au d\'ebut du texte, comme mesures invariantes ne chargeant pas l'ensemble exceptionnel. Consid\'erons une suite (ou une suite g\'en\'eralis\'ee) $(y_{n})_{n\in \N}$ de points de~$Y$ convergeant vers un point~$y$. Comme dans la preuve du th\'eor\`eme~\ref{th:chicontinue}, il suffit de montrer que toute valeur d'adh\'erence de~$\mu_{\varphi_{y_{n}}}$ co\"incide avec~$\mu_{\varphi_{y}}$. Soit~$\mu$ une telle valeur d'adh\'erence. Chaque $\mu_{\varphi_{y_{n}}}$ \'etant invariante par~$\varphi_{y_{n}}$, $\mu$ est invariante par~$\varphi_{y}$. Supposons, par l'absurde, que $\mu$ charge l'ensemble exceptionnel de~$\varphi_{y}$, et donc un point fixe super-attractif~$x$. Il existe alors un voisinage~$U$ de~$x$ dans~$X$ tel que tout point~$z$ de~$U$ appartienne \`a l'ensemble de Fatou de~$\varphi_{\pi(z)}$. Par cons\'equent, $x$ n'appartient pas \`a l'adh\'erence du support des~$\mu_{\varphi_{y_{n}}}$, d'o\`u la contradiction d\'esir\'ee.

Cet argument ne permet pas en revanche de d\'emontrer la continuit\'e des potentiels (\cf~proposition~\ref{prop:potentielintro}), ni celle des \'energies mutuelles (\cf~corollaire~\ref{cor:energieintro}).
\end{rema}

\backmatter

\nocite{}
\bibliographystyle{alpha}
\bibliography{../../biblio}

\end{document}